\documentclass[10.5pt,letterpaper,leqno]{article}
\usepackage[letterpaper, hmargin=2.2cm, vmargin={2cm, 2cm}]{geometry}
\linespread{1.84}
\usepackage{myDefs}
\usepackage[symbol]{footmisc}

\begin{document}
\title{Bounds on the growth of subharmonic frequently oscillating functions}
\author{Adi Gl\"ucksam\thanks{Supported in part by Schmidt Futures program}}
\maketitle

\begin{abstract}
We present a Phragm\'en-Lindel\"of type theorem with a flavour of Nevanlinna's theorem for subharmonic functions with frequent oscillations between zero and one. We use a technique inspired by a paper of Jones and Makarov.
\end{abstract}
\section{Introduction}
In this note we introduce a Phragm\'en-Lindel\"of type theorem for subharmonic functions which are bounded on a set, possessing a special geometric structure. The bounds we obtain depend heavily on the structure of the set, and are proven to be optimal. A different but valid angle on the matter, would be to say we present a more elaborate version of Nevanlinna's theorem for a special class of subharmonic functions with a `well distributed' zero set.

To formally state our results, we will need the following definitions:
\subsection{Definitions}
A cube $I$ is called a {\it basic cube} if $I=\prodit j 1 d [n_j,n_j+1)$ for $n_1,\cdots,n_d\in\Z$, i.e $I$ is a half open half closed cube with edge length one, whose vertices lie on the lattice $\Z^d$. Given a subharmonic function $u$ and a basic cube $I$, we consider the properties: 
\begin{center}
\begin{minipage}[b]{.3\textwidth}
\vspace{-\baselineskip}
\begin{equation*}
\tag{P1}\underset{x\in I}\sup\; u(x)\ge 1 \label{eq:max}
\end{equation*}
\end{minipage}%
\hspace{0.2\textwidth}
\begin{minipage}[b]{.4\textwidth}
\vspace{-\baselineskip}
\begin{equation*}
\tag{P2} \lambda_{d-1}(I\cap Z_u)\ge\eps_d \label{eq:zero}
\end{equation*}
\end{minipage}
\end{center}
for some constant $\eps_d>0$, where $Z_u:= \bset{u\le 0}$ and $\lambda_{d-1}$ denotes the $(d-1)$-dimensional Hausdorff content. If a basic cube $I$ satisfies both properties, we say that {\it the function $u$ oscillates in $I$}. Otherwise, we say $I$ is a {\it rogue basic cube}.

Do oscillations have any effect on the growth of a subharmonic function? The following observation suggests that the answer is yes:
\begin{obs}\label{obs:jump}
There exists a constant $\alpha_d$, which depends on the dimension alone, so that for every subharmonic function $u$ defined in a neighbourhood of the unit ball in $\R^d$, $B(0,1)$, if $\lambda_{d-1}\bb{Z_u\cap B\bb{0,\frac12}}>\eps>0$ then
$$
\underset{y\in B(0,1)}\sup\; u(y)\ge u(0)e^{\alpha_d\cdot \eps}.
$$
\end{obs}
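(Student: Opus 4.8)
The plan is to reduce everything to a quantitative lower bound for a nonnegative superharmonic function that is bounded below on a set of positive $(d-1)$-content, and to extract that bound from Frostman's lemma and the Riesz decomposition. First I would dispose of the trivial cases: we may assume $M:=\sup_{B(0,1)}u$ is finite and $u(0)>0$ (if $u(0)\le 0$ then $u(0)e^{\alpha_d\eps}\le u(0)\le M$, and if $M=+\infty$ there is nothing to prove); note then $M\ge u(0)>0$ and $\eps\le\lambda_{d-1}(B(0,1/2))=:C_d<\infty$ by monotonicity. Setting $v:=M-u$, I obtain a nonnegative superharmonic function on $B(0,1)$ with $v\ge M$ on $E:=Z_u\cap B(0,1/2)$ and $\lambda_{d-1}(E)\ge\eps$. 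The goal then becomes the estimate
$$v(0)\ \ge\ \alpha_d\,\eps\,M;$$
granting it, $u(0)=M-v(0)\le M(1-\alpha_d\eps)\le Me^{-\alpha_d\eps}$, i.e. $\sup_{B(0,1)}u\ge u(0)e^{\alpha_d\eps}$ with the same $\alpha_d$. Since Hausdorff content is a Choquet capacity, $E$ contains a compact subset of content $\ge\eps/2$, so I may assume $E$ compact; I may also assume $v(0)<\infty$. I would write the argument for $d\ge 3$; $d=2$ is identical with the Newtonian kernel replaced by the logarithmic one, and $d=1$ (subharmonic $=$ convex) is elementary.

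The engine is Frostman's lemma: pick a measure $\mu$ carried by $E$ with $\mu(B(x,r))\le r^{d-1}$ for all $x,r$ and total mass $\mu(E)\ge c_d\eps$; note $\mu(E)\le\lambda_{d-1}(E)\le C_d$. Let $G$ be the Green function of $B(0,1)$ and $P^\mu:=\int G(\cdot,y)\,d\mu(y)$ the associated Green potential. Two elementary kernel estimates carry the argument. (i) Since $G(x,y)\le c_d|x-y|^{2-d}$, the layer-cake integral of the growth condition on $\mu$ gives a \emph{uniform} bound $\sup P^\mu\le C_d\mu(E)^{1/(d-1)}\le C_d'$, a purely dimensional constant (here $\mu(E)\le C_d$ is used). (ii) Comparing sizes near $\partial B(0,1)$ — where both $P^\mu(y)$ and $G(0,y)$ decay like $1-|y|$, because $E\subset B(0,1/2)$ stays away from the boundary — with the interior ball $B(0,3/4)$ — where $G(0,y)$ is bounded below by a dimensional constant while $P^\mu\le C_d'$ — one obtains $P^\mu\le K_d\,G(0,\cdot)$ on all of $B(0,1)$, with $K_d$ dimensional.

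Next, decompose $v=h+P^\nu$ by the Riesz representation theorem, where $h\ge 0$ is the greatest harmonic minorant of $v$ and $\nu\ge0$ is its Riesz measure; since $v(0)<\infty$, $\int G(0,y)\,d\nu(y)\le v(0)$. Harnack's inequality gives $h\le C_d h(0)$ on $\overline{B(0,1/2)}\supseteq E$. If $h(0)\ge M/(2C_d)$, then $v(0)\ge h(0)\ge M/(2C_d)\ge\alpha_d\eps M$ already (using $\eps\le C_d$). Otherwise $h(0)<M/(2C_d)$, so $P^\nu=v-h\ge M/2$ on $E$, and integrating against $\mu$, using Tonelli and (i)--(ii),
$$\frac{M}{2}\,\mu(E)\ \le\ \int_E P^\nu\,d\mu\ =\ \int P^\mu\,d\nu\ \le\ K_d\int G(0,y)\,d\nu(y)\ \le\ K_d\,v(0),$$
so $v(0)\ge\mu(E)M/(2K_d)\ge c_d\eps M/(2K_d)$. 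In both cases $v(0)\ge\alpha_d\eps M$ with $\alpha_d:=\min\{(2C_d^2)^{-1},\,c_d/(2K_d)\}$, which proves the claim.

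The main obstacle I anticipate is step (ii): the global comparison $P^\mu\le K_d\,G(0,\cdot)$ on $B(0,1)$, uniform over all admissible $\mu$ — essentially pinning down the boundary decay of $P^\mu$ — together with the bookkeeping that keeps every constant dimensional (which relies on $\mu(E)\le\lambda_{d-1}(B(0,1/2))<\infty$). The supporting facts — capacitability of Hausdorff content to reduce to a compact $E$, Frostman's lemma, and the Riesz decomposition of a nonnegative superharmonic function — are standard and I would simply cite them.
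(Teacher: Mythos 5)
Your proof is correct, but it takes a genuinely different route from the paper's. The paper derives the observation as a near-immediate consequence of Claim~\ref{clm:HarmonicBnd}, the harmonic-measure estimate $\omega(0,E;B(0,1)\setminus E)\gtrsim_d\lambda_{d-1}(E)$, which is itself established in Appendix~A via Frostman's lemma, the equilibrium measure of $E$, and its energy; one then inserts this into $u(0)\le M_u(B(0,1))\bigl(1-\omega(0,E;B(0,1)\setminus E)\bigr)$, available because $u\le 0$ on $E$. You bypass the equilibrium measure: after switching to the nonnegative superharmonic $v=M-u$, you combine Frostman's lemma, the Riesz decomposition $v=h+P^{\nu}$, a Harnack dichotomy on the harmonic part $h$, and the global comparison $P^{\mu}\lesssim_d G(0,\cdot)$ (proved on the inner ball by your uniform bound on $P^{\mu}$ and on the outer annulus by the maximum principle, using that $E\subset B(0,\tfrac12)$ so both sides are harmonic there and vanish on $\partial B(0,1)$). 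Both routes rest on Frostman's lemma, but the paper's factors out a reusable, $u$-independent capacity-to-harmonic-measure estimate, whereas yours is self-contained and avoids the energy formalism, at the cost of entangling the Frostman measure on $E$ with the Riesz measure of the specific $u$. The Harnack dichotomy, distinguishing whether the drop $M-u(0)$ comes mainly from the harmonic part or from the potential part, is a clean structural device with no counterpart in the paper's argument.
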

To prove this observation we rely on the following claim:
\begin{claim}\label{clm:HarmonicBnd}
Let $E\subset B\bb{0,\frac12}$ be a compact set with $\lambda_{d-1}(E)>0$. Then
$$
\omega(0,E;B(0,1)\setminus E)\gtrsim_d\;\lambda_{d-1}(E).\footnote[1]{Here, and anywhere else in this paper, we write $A\lesssim B$ if there exists $\alpha>0$ for which $A\le\alpha\cdot B$. We write $A\sim B$ if $A\lesssim B$ and $B\lesssim A$. If the constant depends on the dimension $d$, we will add a subscript $d$ to each notation, i.e $A\lesssim_d B$ and $A\sim_d B$.}
$$
\end{claim}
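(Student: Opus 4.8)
The plan is to realize $\omega(0,E;B(0,1)\setminus E)$ as the value at $0$ of the equilibrium potential of $E$ relative to the ball, and then to bound from below the total mass of the associated equilibrium measure by $\lambda_{d-1}(E)$, by testing that measure against a Frostman measure of $E$.

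Let $G$ be the Green function of $B(0,1)$. First I would recall the classical fact that $E$ carries an equilibrium (capacitary) measure $\mu\ge 0$, supported on $E$, whose potential $p:=\int_E G(\cdot,y)\,d\mu(y)$ satisfies $0\le p\le 1$ on $B(0,1)$, $p=1$ quasi-everywhere on $E$, and $p(\cdot)=\omega(\cdot,E;B(0,1)\setminus E)$. Since $y\mapsto G(0,y)$ is radially symmetric and decreasing in $|y|$, there is an explicit $c_d>0$ with $G(0,y)\ge c_d$ for all $0<|y|\le\frac12$; as $\mu$ does not charge the polar set $\{0\}$, this gives
$$\omega(0,E;B(0,1)\setminus E)=\int_E G(0,y)\,d\mu(y)\ \ge\ c_d\,\mu(E),$$
so the whole problem reduces to proving $\mu(E)\gtrsim_d\lambda_{d-1}(E)$.

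For that I would invoke Frostman's lemma for Hausdorff content: there is a positive measure $\sigma$ on $E$ with $\sigma(B(x,r))\le r^{d-1}$ for all $x\in\R^d$, $r>0$, and $\sigma(E)\ge c_d'\,\lambda_{d-1}(E)$. The growth bound forces $\sigma(A)\le\lambda_{d-1}(A)$ for every Borel set $A$, so in particular $\sigma$ assigns zero mass to polar sets (these have vanishing $(d-1)$-dimensional content). Next I would estimate the Green potential of $\sigma$: using $G(x,y)\lesssim_d|x-y|^{2-d}$ when $d\ge 3$ (resp.\ $\lesssim\log\frac1{|x-y|}$ when $d=2$), a dyadic splitting of $\{0<|x-y|\le 1\}$ into the annuli $\{2^{-j-1}\le|x-y|<2^{-j}\}$, and the bound $\sigma(B(y,2^{-j}))\le 2^{-j(d-1)}$, the resulting sum is geometric and yields a dimensional constant $C_d$ with $\sup_{y\in B(0,1)}\int_E G(x,y)\,d\sigma(x)\le C_d$. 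Finally, since $p=1$ holds $\sigma$-almost everywhere on $E$ (the exceptional polar set being $\sigma$-null) and $G\ge 0$ is jointly Borel, Tonelli's theorem gives
$$c_d'\,\lambda_{d-1}(E)\ \le\ \sigma(E)=\int_E p(x)\,d\sigma(x)=\int_E\Bigl(\int_E G(x,y)\,d\sigma(x)\Bigr)d\mu(y)\ \le\ C_d\,\mu(E),$$
whence $\mu(E)\ge (c_d'/C_d)\,\lambda_{d-1}(E)$, and combining with the first display proves the claim.

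The step I expect to be the main obstacle is the potential-theoretic set-up of the second paragraph: producing the equilibrium measure $\mu$ and, above all, justifying that its potential equals $1$ \emph{quasi-everywhere} on $E$, so that this identity may legitimately be integrated against $\sigma$ (whose null sets are precisely the sets of zero $(d-1)$-content). All boundary-regularity subtleties of the domain $B(0,1)\setminus E$ are confined to that point and are handled by the standard PWB/réduite formalism; everything afterwards --- the Green-function lower bound for $\omega(0,\cdot)$ and the dyadic potential estimate for $\sigma$ --- is routine book-keeping. The degenerate case $d=1$, where $\lambda_0$ is essentially the indicator of nonemptiness, is elementary and can be treated separately.
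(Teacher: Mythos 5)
Your proof is correct, and it takes a genuinely different (and somewhat more streamlined) route from the paper's. The paper proves the claim in Appendix A by factoring through the Newtonian/logarithmic energy $I(\nu_0)$ of the classical equilibrium measure of $E$: one claim shows $\lambda_{d-1}(E)\lesssim_d -1/I(\nu_0)$ (Frostman measure + dyadic bound on the $k_d$-potential), and a second claim shows $\omega(0,E;B(0,1)\setminus E)\gtrsim_d -1/I(\nu_0)$ (Riesz representation of the superharmonic extension of $\omega$, then pairing against the $k_d$-equilibrium potential, which is constant $=I(\nu_0)$ q.e.\ on $E$). You instead work throughout with the \emph{Green} equilibrium (capacitary) measure $\mu$ of $E$ relative to $B(0,1)$, observing at the outset that the superharmonic extension of the harmonic measure is exactly the capacitary potential $p=\int G(\cdot,y)\,d\mu(y)$, with $p=1$ q.e.\ on $E$; this lets you reduce directly to $\mu(E)\gtrsim_d\lambda_{d-1}(E)$, which you get in one stroke by testing $p$ against a Frostman measure $\sigma$ via Tonelli and the same dyadic potential estimate. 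The common core (Frostman's lemma plus a geometric-series bound on the potential of the Frostman measure) is the same; your version avoids the whole-space kernel $k_d$, the energy as an intermediary quantity, and the normalization to a probability measure, at the cost of needing the slightly heavier off-the-shelf facts that the capacitary potential equals the r\'eduite of $1$ on $E$ and that polar sets have zero $(d-1)$-content (which is what makes the q.e.\ identity usable $\sigma$-a.e.). Both routes are sound; yours is arguably cleaner once the capacitary-potential machinery is granted.
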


This is definitely known and used by experts in the field. In two words, it is true since the harmonic measure is bounded from bellow by a constant over the energy of the equilibrium measure of the set $E$, which in turn is bounded from bellow by a constant times the $(d-1)$-Hausdorff content of the set $E$. For the reader's convenience, we include a proof of this claim in Appendix A.
\begin{proof}[Proof of Observation \ref{obs:jump}]
Let $E\subseteq Z_u\cap B\bb{0,\frac12}$ be a compact set with $\lambda_{d-1}(E)>\frac12\lambda_{d-1}\bb{Z_u\cap B\bb{0,\frac12}}$. Then, following Claim \ref{clm:HarmonicBnd}, we know that $\omega(0,E;B(0,1)\setminus E)\ge \alpha_d\cdot\eps$ for some uniform constant $\alpha_d$, which depends on the dimension alone. By definition of harmonic measures, and since $u|_E\le 0$
\begin{eqnarray*}
&&u(0)\le\integrate{\partial\bb{B(0,1)\setminus E}}{}{u(y)}\omega(0,y;B(0,1)\setminus E)\le \underset{y\in B(0,1)}\sup\; u(y)\cdot\omega(0,\partial B(0,1)\setminus E;B(0,1)\setminus E)\\
&=&\underset{y\in B(0,1)}\sup\; u(y)\bb{1-\omega(0,E;B(0,1)\setminus E)}\le \underset{y\in B(0,1)}\sup\; u(y)\bb{1-\alpha_d\cdot\eps}\le \underset{y\in B(0,1)}\sup\; u(y)e^{-\alpha_d\cdot\eps},
\end{eqnarray*}
concluding the proof.
\end{proof}
It seems like every oscillation imposes an increment on the function. It is therefore interesting to investigate what is the relationship between the number of basic cubes where the function $u$ oscillates and its minimal possible growth.\\
To describe the growth-rate of a non-negative subharmonic function $u$, we will abuse the notation $M_u$ to denote the function from subsets of $\R^d$ to $\R_+$ defined by $M_u(A):=\underset{z\in A}\sup\; u(z)$ , as well as the function from $\R_+$ to itself defined by $M_u(R):=M_u(B(0,R))$.

In this paper we produce a Phragm\'en–Lindel\"of type theorem to describe bounds on the minimal possible growth of subharmonic functions with frequent oscillations between zero and one. We show that every subharmonic function $u$ defined in a neighbourhood of $[-N,N]^d$, and oscillating in all but $a_N$ basic cubes, has a lower bound
$$
M_u\bb{[-N,N]^d}\ge\exp\bb{c_d\frac{N}{1+\bb{\frac{a_N}{N}}^{\frac1{d-1}}}\log^{\frac d{d-1}}\bb{2+\frac{a_N}{N}}},
$$
as long as $N$ is large enough, and $a_N\le c_0\cdot N^d$. The constants $c_d$ and $c_0$ depend on the dimension $d$. We also show that this bound is optimal.

To describe the asymptotic behavior of the number of rogue basic cubes, those who do not satisfy either property (\ref{eq:max}) or property (\ref{eq:zero}), we use a comparison function:\\
Given a monotone non-decreasing function $f\,(t)\le t^d$, a subharmonic function $u$, and a cube $Q\subset\R^d$ with edge length $\ell(Q)$, we let
$$
\gamma_f^u(Q)=\gamma(Q)=\frac{\#\bset{I\subset Q ,I\text{ is a rogue basic cube}}}{f(\ell(Q))}.
$$
We say $u$ is {\it $f$-oscillating in $Q$} if $u$ is defined in a neighbourhood of $Q$, and $\gamma(Q)<1$. We say $u$ is {\it $f$-oscillating} if
$$
\limitsup N\infty\gamma([-N,N]^d)<1.
$$
Lastly, we remind the reader the definition of regularly varying functions. A function $f$ is called {\it regularly varying of index $a\ge0$} if $f(t)=t^a\cdot g(t)$ and the function $g$ satisfies that for every $\lambda>0$
$$
\limit t\infty \frac{g(\lambda\cdot t)}{g(t)}=1.
$$
The function $g$ is called {\it slowly varying}. For more information, see, for example, page 18 in \cite{Bingang}.
\subsection{History and Motivation}
Let us begin with a simple case: What can be said about the minimal possible growth of a non-negative subharmonic function which oscillates in every basic cube, i.e an $f$-oscillating function for $f(t)=1$?\\
In light of Observation \ref{obs:jump}, a lower bound is not hard to obtain by noting that every ball of radius $2\sqrt d$ includes a basic cube as a subset. We may therefore apply Observation \ref{obs:jump} with 
$\eps=\eps(d)$ to find a sequence of points, $\bset{x_n}\in\R^d$, satisfying that $\abs{x_n}-\abs{x_{n-1}}= 2\sqrt d$ while $u(x_n)\ge e^{\delta_d}u(x_{n-1})\ge\cdots\ge e^{\delta_d\cdot n}$ for some $\delta_d>0$. This implies that
$$
\limitinf r\infty \frac{\log\bb{M_u(B_d(0,r))}}{r}\ge\limitinf n\infty \frac{\log\bb{u(x_n)}}{2n\sqrt d}\ge\frac {\delta_d}{2\sqrt d}>0.
$$
In fact, most of the proofs of lower bounds we will mention, basically count the number of times we can apply Observation \ref{obs:jump} to obtain a lower bound on the growth of the function.\\
To see a construction of an example with this optimal growth, one could extend Theorem 2 in \cite{Us2017} to higher dimensions, which we shall do in Section \ref{sec:upper}.

It is interesting to ask what happens if we relax the condition that $u$ oscillates in {\it every} basic cube. The first result in this direction, which already appears in \cite{Us2017}, was for the case $d=2$ where we allow $u$ to have many rogue basic cubes, their number proportional to the area. Here is a restatement of it:
\begin{lem}{\cite[Lemma 1]{Us2017}}\label{lem:1}
Let $Q$ be a square of edge length $\ell>10$, and let $f(t)=\alpha t^2$ for some $\alpha\in(0,1)$. Then every non-negative subharmonic function $u$ which is $f$-oscillating in $Q$, must satisfy
\[
M_u(Q) \ge e^{c_\alpha\left( \frac{\log \ell}{\log\log \ell} \right)^2}\,.
\]
\end{lem}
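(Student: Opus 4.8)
The plan is to iterate Observation~\ref{obs:jump} along a nested family of cubes, with a budgeting argument that controls how the rogue basic cubes interfere at each scale. Normalize so that $Q$ is centered at the origin, and write $\ell=\ell(Q)$ and $M=M_u(Q)$; since $u$ is $f$-oscillating in $Q$ with $f(t)=\alpha t^2$, the number of rogue basic cubes inside $Q$ is at most $\alpha\ell^2$. The one structural consequence of $\alpha<1$ that drives everything is that this property is hereditary under subdivision: if a cube $Q'\subseteq Q$ of edge $s$ is tiled by $(s/t)^2$ sub-cubes of edge $t$, then, since $Q'$ carries fewer than $\alpha s^2$ rogue cubes, at least one (in fact a positive proportion, at the cost of an infinitesimal increase of $\alpha$) of those sub-cubes again carries fewer than $\alpha t^2$ rogue cubes, i.e.\ $u$ is still $\alpha$-oscillating on it. Thus one can descend from any cube to a proper sub-cube of it, at any prescribed scale, without losing the hypothesis.

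The workhorse is a one-scale growth estimate: there is a positive function $g(s)$, decreasing in $s$, such that whenever $u(p)\ge V$ at a point $p$ of a cube $Q'$ of edge $s$ on which $u$ is $\alpha$-oscillating, and $Q'$ sits with a collar of width $\sim s$ inside a region where $u$ is $f$-oscillating, then $u>V\,e^{g(s)}$ somewhere in that region. To see it, one locates an oscillating basic cube $I$ near $p$ — the bound on the number of rogue cubes forces one into the relevant ball — so that $I$ carries both a point where $u\ge1$ and a chunk of $Z_u$ of $(d-1)$-content $\gtrsim_d 1$; rescaling the configuration so that this chunk sits well inside the unit ball and applying Observation~\ref{obs:jump} (its scaled form, from the same harmonic-measure computation as Claim~\ref{clm:HarmonicBnd}) produces a multiplicative jump, and iterating over a geometric range of radii inside the collar accumulates $g(s)$. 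The point to keep track of is that an edge-one chunk of $Z_u$ has $(d-1)$-content that shrinks like $s^{-(d-1)}$ upon rescaling to unit scale, so $g(s)$ deteriorates as $s$ grows — this is precisely the tension in the scale bookkeeping below.

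One then builds a chain $Q=Q_0\supset Q_1\supset\cdots\supset Q_n$ with $\ell(Q_j)=\ell/T^{\,j}$, choosing $Q_{j+1}$ inside $Q_j$ by the hereditary property so that $u$ stays $\alpha$-oscillating on it, and applying the one-scale estimate with $Q'=Q_{j+1}$ inside the region $Q_j$ to promote, step by step, the largest value on $Q_{j+1}$ to a strictly larger value already attained in $Q_j$, hence in $Q$. Telescoping and summing the gains gives $\log M\ge\sum_{j\le n}g(\ell/T^{\,j})$, and choosing the ratio $T$ of order $\log\ell$ — so that the chain has length $n\sim\log\ell/\log\log\ell$ — is what balances ``many scales'' against ``each scale's gain not too small'' and produces a bound of the asserted order $(\log\ell/\log\log\ell)^2$.

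The step I expect to be the real obstacle is the interference of the rogue cubes with the iteration. The hypothesis bounds only the \emph{total} number of rogue cubes in $Q$ by $\alpha\ell^2$, and this is comparable to the area of $Q$ itself, so the rogue cubes may cluster arbitrarily in small windows — in particular around the very point where the previous step's gain was realized — and the sub-cube handed down by the hereditary property need not lie anywhere near that point. A naive iteration would therefore keep losing the accumulated value. To get around this the promotion step must be insensitive to the location of the maximizer: one uses the monotonicity of $r\mapsto\max_{\partial B(0,r)}u$ together with a harmonic-measure estimate over a single large ball that simultaneously accounts for \emph{all} of the many oscillating cubes in it, so that the enlarged value is deposited into the pre-chosen sub-cube rather than somewhere uncontrolled; alternatively one runs the subdivision as an honest stopping-time decomposition and argues along a suitable branch of the stopping tree. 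Getting this quantitative balance exactly right — a coarse subdivision leaves too few scales, a fine one makes each scale's gain negligible against the rogue budget — is where the $\log\log\ell$ enters.
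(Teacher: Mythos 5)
Note first that the paper does not reprove this lemma; it is quoted from \cite{Us2017}, and the body of the paper \emph{strengthens} it: the Main Lemma (Lemma~\ref{lem:Jones-Makarov}) applied with $f(t)=\alpha t^2$ in dimension two yields $\log M_u(Q)\gtrsim_\alpha \log^2\ell$, removing the $\log\log$ entirely. So the relevant comparison for your proposal is the stopping-time argument of Subsection~\ref{sec:main_lem}.

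Your sketch is organized around the right two ideas (a pigeonhole descent to keep an $\alpha$-oscillating sub-cube, and iterated Observation~\ref{obs:jump} to climb back out), and you flag the decisive difficulty yourself --- but you do not resolve it, and without a resolution the argument is not a proof. The one-scale estimate as you state it, ``locate an oscillating basic cube $I$ near $p$'', is false as a blanket claim: with up to $\alpha\ell^2$ rogue cubes available, they may cluster around the boundary maximizer $p$ of $Q_{j+1}$ out to a radius comparable to $\sqrt\alpha\,\ell(Q_{j+1})$, and the hereditary pigeonhole that produced $Q_{j+1}$ controls only the \emph{count} of rogue cubes inside it, not their position relative to $p$; since $p$ is dictated by $u$ and $Q_{j+1}$ by counting, the two are decoupled and the chain does not iterate. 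The two cures you gesture at (``a harmonic-measure estimate over a single large ball accounting for all the oscillating cubes at once'', ``an honest stopping-time decomposition'') are precisely the missing content, not a reduction of it. That missing content is what the paper's Main Lemma supplies: the cubes $Q_j$ there are taken \emph{concentric} with a fixed basic cube $I$ (so no adaptive descent, hence no drifting target), the function $\rho(x)$ quantifies, at every boundary point regardless of where the maximizer lands, the smallest radius at which one captures a definite density of non-rogue cubes, and the scales $\kappa_j$ are spaced so that $B(x_j,\rho(x_j))\subset Q_{j+1}$ automatically. The real quantitative work --- guaranteeing the chain is long for at least half of the cubes $I$ --- is the averaging of Step 1 and the combinatorics of Steps 2--3, none of which appears in your plan. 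Finally, your closing numerics ($T\sim\log\ell$, $n\sim\log\ell/\log\log\ell$, per-scale gain $\sim\log\ell/\log\log\ell$) are asserted rather than derived and are not compatible with the $s$-dependent $g(s)$ you introduced earlier, so even the arithmetic that is supposed to produce $(\log\ell/\log\log\ell)^2$ is not in place.
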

In \cite{Us2017}, we also indirectly show a lower bound. We constructed a subharmonic function in $\C$ which is $f$-oscillating for $f(t)=\alpha t^2$ (for some $\alpha\in(0,1)$) while
$$
\limitsup r\infty\frac{\log\bb{M_u(r)}}{\log^{2+\eps}(r)}<\infty.
$$
These functions arise when studying the support of translation invariant probability measures on the space of entire functions, i.e probability measures which are invariant under the action of the group $\C$ acting on the space of entire functions by translations: $(T_wf)(z)=f(z+w)$. For more information, see \cite{Us2017}.\\
The examples we discussed are two extremes. One discusses the case $f(t)=1$, i.e there are no rogue basic cubes. The other deals with the situation where $f(t)=\alpha t^2$ for $\alpha\in(0,1)$, i.e the number of rogue basic cubes in a large cube is proportional to the volume of the cube. It is natural to ask what happens in the intermediate cases, which is the subject of this paper.
\subsection{Results}
Let $f$ be a regularly varying function. What can be said about the minimal possible growth of $f$-oscillating subharmonic functions?
\begin{thm}\label{thm:main}
\begin{enumerate}[label=(\Alph*)]
\item Every $f$-oscillating subharmonic function $u$ satisfies
$$
\limitinf R\infty \frac{\log(M_u(R))}{\frac{R}{1+\bb{\frac{f(R)}{R}}^{\frac1{d-1}}}\log^{\frac d{d-1}}\bb{2+\frac{f(R)}{R}}}>0,
$$
provided that $f(t)\le c_0\cdot t^d$ for all $t$ large, where $c_0$ is a constant, which depends on the dimension alone.
\item Let $f$ be a regularly varying function of index $\alpha\in[0,d]$. Assume that either $\alpha<d$ or $f(t)=t^d\cdot g(t)$ for some monotone non-increasing function $g$. Then there exists an $f$-oscillating subharmonic function $u$ satisfying
$$
\limitsup R\infty \frac{\log(M_u(R))}{\frac{R}{1+\bb{\frac{f(R)}{R}}^{\frac1{d-1}}}\log^{\frac d{d-1}}\bb{2+\frac{f(R)}{R}}}<\infty.
$$
\end{enumerate}
\end{thm}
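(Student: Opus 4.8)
The two parts require genuinely different machinery, so I would organize the proof accordingly. For part (A), the lower bound, the strategy is the iterated-Harnack / counting scheme already hinted at in the introduction: each basic cube in which $u$ oscillates forces a multiplicative increment on $M_u$ via Observation \ref{obs:jump}, and one must bookkeep these increments along a chain of nested balls while paying attention to where the (few) rogue cubes sit. The subtle point is that the naive chain argument (as in the $f\equiv 1$ case) gives only linear growth $\log M_u(R)\gtrsim R$; to get the extra $\log^{d/(d-1)}$ factor one needs the Jones–Makarov-type refinement. The idea there is that if we only know $M_u(B(0,r))\le m$, then $u/m$ is a subharmonic function bounded by $1$ with a large zero set, and one runs a multiscale/stopping-time argument: partition $[-N,N]^d$ into subcubes, on most of which $u$ oscillates, and compare $u$ to the harmonic measure of the union of the zero sets. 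Quantitatively, if $u$ is small on a dense union of $(d-1)$-dimensional pieces distributed through the cube, the harmonic measure estimate of Claim \ref{clm:HarmonicBnd} compounds across scales to yield a decay (hence, reading backwards, a growth) of the stated order. The density of oscillating cubes enters through $f(R)/R$: when $f$ is small (few rogue cubes) the zero set is almost everywhere, forcing the largest increment; when $f(R)\sim R^d$ the oscillating cubes are sparse and the growth degrades to the $(\log/\log\log)^{d/(d-1)}$ regime matching Lemma \ref{lem:1}. I would first prove the clean statement for a cube $Q=[-N,N]^d$ with $\gamma(Q)<1$ fixed, get the bound $M_u(Q)\ge\exp\!\bb{c_d\frac{N}{1+(a_N/N)^{1/(d-1)}}\log^{d/(d-1)}(2+a_N/N)}$ with $a_N=f(N)$, and then deduce the $\liminf$ statement for $f$-oscillating functions using regular variation to replace $f(N)$ by $f(R)$ along all large $R$.

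For the core inequality I expect the main obstacle to be the multiscale harmonic-measure bound itself. The mechanism is: work at a dyadic scale $2^k$, look at the basic cubes inside a box of side $2^k$; all but a $\gamma$-fraction are oscillating, so each contributes a piece of $Z_u$ of content $\ge\eps_d$; Claim \ref{clm:HarmonicBnd} rescaled says the harmonic measure of that zero piece from the center of the ambient ball is $\gtrsim_d \eps_d$, so $u$ at the center is at most $(1-c\eps_d)M_u$ of the box. Iterating this over $\sim N/2^k$ translates and then over the $\sim\log N$ scales is where one must be careful not to lose the exponents: the right way is to choose the scale $2^k$ as a function of the rogue density — roughly $2^k\sim (a_N/N)^{1/(d-1)}$, the smallest scale at which a typical box still contains enough oscillating cubes — and then the number of independent ``hits'' along a radial chain is $\sim N/2^k$, each of relative size controlled by a $\log(2+a_N/N)$ coming from a Harnack-type comparison between scales, giving the product $\bb{1-c/\log(2+a_N/N)}$ raised to the power $\frac{N}{2^k}\cdot\log(2+a_N/N)$ — this is the computation that produces exactly $\exp(c_d N (a_N/N)^{-1/(d-1)}\log^{d/(d-1)}(2+a_N/N))$. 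Getting the three $\log$-powers and the $(d-1)$-st roots to line up is delicate and is where I would spend most of the effort.

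For part (B), the upper bound, the plan is an explicit construction, deferred to Section \ref{sec:upper}, generalizing Theorem 2 of \cite{Us2017} from $d=2$ to general $d$. The building block is a subharmonic function that equals (a small positive constant) on a prescribed family of $(d-1)$-dimensional slabs and grows as slowly as the maximum principle allows in between — concretely, one takes $u=\sup_n c_n\, v_n$ or $u=\sum$ of rescaled copies of a single ``bump'' potential, where $v_n$ is (roughly) the logarithm of the distance to a lattice of hyperplanes at scale $R_n$, truncated below at $0$ and normalized so that $\sup_{B(0,R_n)}v_n\approx 1$. Choosing the scales $R_n$ and the density of the hyperplane lattice inside the $n$-th annulus as a function of $f$ — sparse enough that $\gamma([-R_n,R_n]^d)<1$, i.e. the lattice of zero-slabs leaves room for $\sim f(R_n)$ rogue cubes — one arranges that $u$ oscillates in all but $\le f(R)$ basic cubes while $\log M_u(R)$ is of the claimed order; the hypothesis that $f$ is regularly varying of index $\alpha<d$, or of index $d$ with slowly-decreasing $g$, is exactly what makes such a self-consistent choice of scales possible (and is what prevents the construction when $g$ increases, since then one cannot fit the required oscillations). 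Subharmonicity of the glued function follows because a supremum (or a locally finite sum) of subharmonic functions is subharmonic once one checks the pieces match up; verifying the oscillation count — property (P1) from the normalization $\sup v_n\approx 1$ and property (P2) from the $(d-1)$-content of the hyperplane slabs — is routine bookkeeping. I expect part (B) to be less technically demanding than part (A), with the only real care needed in matching the scales to $f$ and in confirming $\gamma<1$ along the full sequence $N\to\infty$.
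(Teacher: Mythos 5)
For part (A) your high-level picture is the right one: Observation~\ref{obs:jump} is used to harvest a fixed multiplicative increment each time one can find a ball whose zero set has definite $(d-1)$-content, and the job is to count how many such increments one can string together along a chain from a typical basic cube out to the boundary of $[-N,N]^d$. You also correctly isolate the heuristic scale $(a_N/N)^{1/(d-1)}$ and correctly observe that matching the exponents is the delicate part. However, the mechanism you then describe --- fixing a single dyadic scale $2^k$ comparable to $(a_N/N)^{1/(d-1)}$ and iterating Claim~\ref{clm:HarmonicBnd} at that one scale --- does not survive scrutiny, because nothing in the hypothesis prevents the rogue cubes from clustering: at your chosen scale there may be entire boxes (near the chosen chain) consisting almost entirely of rogue cubes, so the rescaled harmonic-measure bound has nothing to bite on. The paper's proof is built precisely to absorb this: it introduces a location-dependent stopping radius $\rho(x)$ (the first radius at which the good set $K$ has definite density), controls the distribution of $\rho$ via a maximal dyadic cover $\mathcal M$, the Hardy--Littlewood maximal function (Claim~\ref{clm:large_sqrs_measure}), and an explicit counting argument (Claim~\ref{clm:prop2}); it then defines the step function $M(k)$, a subsequence $\{\kappa_j(I)\}$, and bounds $\#\{\kappa_j\}$ from below by $\sum 1/M(k)$ using Jensen's inequality. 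Only after that does the optimization in $\bar m$ recover the single ``dominant'' scale you guessed. Your sketch identifies the target but omits the entire variable-scale/stopping-time machinery that actually makes the argument work; it also conflates this argument with the harmonic-measure reformulation (Lemma~\ref{lem:sodin}) that the paper relegates to Appendix~B as an alternative route.

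For part (B) there is a genuine conceptual error. You propose building the example from a lattice of $(d-1)$-dimensional slabs (hyperplanes thickened to a small width) on which $u$ is positive, zero in between. The paper explicitly explains in the prologue to Section~\ref{sec:upper} why exactly this ``wall'' construction, which does work in $d=2$ (it is the construction of \cite{Us2017}), cannot produce the claimed bound in dimension $d\ge 3$: a wall inside $[-N,N]^d$ meets on the order of $N^{d-1}$ basic cubes, and those cubes are potentially rogue, so for any $f(t)\ll t^{d-1}$ the slab construction already overshoots the allowed rogue-cube budget (and even for $t^{d-1}\lesssim f(t)\ll t^d$ it gives a sub-optimal growth rate). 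The paper also explains why a genuinely self-similar construction (repeating the same building block translated across $[-\ell,\ell]^d$) accumulates on the order of $\ell^d$ rogue cubes and hence can never work for $f(t)\ll t^d$. The actual construction replaces walls by \emph{tubes} of varying diameters arranged in a tree-like set, and replaces self-similarity by a one-parameter family of ``outer subtrees'' whose parameters $\epsilon_k, s_k, \delta_k$ are recalibrated at each scale as a function of $f$, so that the rogue-cube count telescopes as $\sum_j f(2^j)\lesssim f(2^k)$ rather than growing like volume. So the whole support geometry, the subharmonic building block ($T_\epsilon$ and $L_\epsilon$ supported on tubes, not the $\cos\cdot\prod\cosh$ slab functions except in the trivial $f(t)\lesssim t$ regime), and the gluing mechanism are different from what you propose, and your proposal would fail as written whenever $d\ge 3$ and $f(t)\ll t^{d-1}$.
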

In particular, if $f(t)=t^\alpha$ then the theorem above shows that the minimal possible growth for $f$-oscillating subharmonic functions is 
$$
\log\bb{M_u(R)}\ge
\begin{cases}
c_d\cdot R &, \alpha\le1\\
c_{d,\alpha}\bb{R^{d-\alpha}\log^d(R)}^{\frac1{d-1}}&, \alpha>1
\end{cases},
$$
and this bound is optimal.

This theorem implies that in Lemma \ref{lem:1}, the $\log\log$ component in the numerator of the exponent's power is not essential. In fact, the following corollary can now be proved-
\begin{cor}
Let $\lambda$ be a non-trivial translation-invariant probability measure on the space of entire functions. Then, for $\lambda$-a.e. function $f$,
$$
\limitinf R\infty\frac{\log\log M_f(R)}{\log^2(R)}>0.
$$
\end{cor}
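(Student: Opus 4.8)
The plan is to derive the corollary from Theorem~\ref{thm:main}(A) in dimension $d=2$, applied to a non-negative subharmonic function built from $\log|f|$ after two normalizations. First, since replacing $f$ by $cf$ for a large constant $c$ leaves both the conclusion and the translation-invariance of $\lambda$ unchanged yet can make $\{|f|\le 1\}$ arbitrarily sparse, I would threshold at a high level: as $|f(0)|<\infty$ and $\lambda$ is a probability measure, one can fix a real $C=C(\lambda)$ with $\lambda(|f(0)|\le e^{C})\ge 1-\delta_0$, where $\delta_0=\delta_0(\eps_2)>0$ is chosen small. Second, a given non-constant $f$ may oscillate only on a coarse scale, so that at unit scale almost every basic cube is rogue; this I would cure by dilating. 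It suffices to treat $\lambda$ ergodic (by the ergodic decomposition; non-triviality passes to the typical ergodic component, so that $\lambda$-a.e.\ $f$ is non-constant and hence $M_f(r)\to\infty$ by Liouville's theorem).

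Fix a large integer $L$ and put $v_L(z):=\bigl(\log|f(Lz)|-C\bigr)^{+}$, which is non-negative and subharmonic, with $M_{v_L}(R)=\log M_f(LR)-C$ for all $R$ large. Unwinding the definitions, a basic cube is rogue for $v_L$ precisely when the corresponding side-$L$ cube $Q$ for $f$ --- these cubes tile $\R^2$ --- is ``bad'', meaning that $\sup_{Q}|f|<e^{C+1}$ or $\lambda_1\bigl(Q\cap\{|f|\le e^{C}\}\bigr)<\eps_2 L$. Since the translations taking one such cube to another form a measure-preserving $\Z^2$-action, Wiener's pointwise ergodic theorem gives, for $\lambda$-a.e.\ $f$,
\[
\frac{1}{(2N)^2}\,\#\{\text{basic cubes in }[-N,N]^2\text{ that are rogue for }v_L\}\ \longrightarrow\ q_L(f)\in[0,1]\qquad(N\to\infty),
\]
where $q_L$ has $\lambda$-mean $p_L:=\lambda(\text{a fixed side-}L\text{ cube is bad})$. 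Granted that $p_L\to 0$ as $L\to\infty$, $L^1$-convergence produces scales $L$ along which $q_L\to 0$ $\lambda$-a.s., so for $\lambda$-a.e.\ $f$ some scale $L$ satisfies $q_L(f)<\min(c_0,1)$; then $v_L$ is $f_0$-oscillating with $f_0(t)=\alpha t^2$ for any $\alpha\in\bigl(q_L(f),\min(c_0,1)\bigr)$. Theorem~\ref{thm:main}(A), whose hypothesis reduces to $\alpha\le c_0$, then gives for $R$ large
\[
\log\log M_f(LR)\ \ge\ \log\bigl(\log M_f(LR)-C\bigr)\ =\ \log M_{v_L}(R)\ \gtrsim\ \frac{R}{1+\alpha R}\log^{2}(2+\alpha R)\ \underset{R\to\infty}{\sim}\ \tfrac1\alpha\log^{2}R,
\]
and substituting $R\mapsto R/L$, the additive $\log L$ being negligible, yields $\liminf_{R\to\infty}\frac{\log\log M_f(R)}{\log^{2}R}\ge\frac{c}{\alpha}>0$, which is the assertion.

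It remains to show $p_L\to 0$, and I would split $p_L\le p_L^{(1)}+p_L^{(2)}$ according to which of the two failure modes occurs. For $p_L^{(1)}=\lambda(\sup_{Q}|f|<e^{C+1})$: by translation invariance and monotonicity, translating and enlarging the cube $Q$ so that such cubes exhaust $\C$ exhibits $p_L^{(1)}$ as decreasing to $\lambda(f\ \text{constant},\ |f|<e^{C+1})=0$ (Liouville), so $p_L^{(1)}\to0$. For $p_L^{(2)}$: by the pointwise ergodic theorem (using $\C$-ergodicity) the planar density of $\{|f|\le e^{C}\}$ equals $\lambda(|f(0)|\le e^{C})\ge 1-\delta_0$ for $\lambda$-a.e.\ $f$, whence for $L$ large the cube $Q$ satisfies $\mathrm{Leb}\bigl(Q\cap\{|f|\le e^{C}\}\bigr)\ge(1-2\delta_0)\,\mathrm{Leb}(Q)$; combined with the elementary estimate $\lambda_1(E)\gtrsim\sqrt{\mathrm{Leb}(E)}$ (cover $E$ efficiently and compare areas), this forces $\lambda_1\bigl(Q\cap\{|f|\le e^{C}\}\bigr)>\eps_2 L$ eventually --- the universal constant here exceeding $\eps_2$ once $\delta_0$ is small --- so $p_L^{(2)}\to0$.

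I expect the crux to be exactly this last point: the lower bound on the one-dimensional content of the sublevel set $\{|f|\le e^{C}\}$ inside a large cube, with a constant beating the fixed $\eps_2$. It rests on (i) the choice of the threshold $C$, which makes that sublevel set of planar density close to one --- possible because $\lambda$ is a probability measure, and exactly what makes the argument insensitive to $f\mapsto cf$ --- and (ii) the passage from planar measure to $\lambda_1$-content above. The remaining pieces --- the ergodic-decomposition reduction, the deduction of non-constancy from non-triviality, the ``translate the cube to fill $\C$'' step, and the $L^1$-to-almost-sure selection of a scale --- I expect to be routine.
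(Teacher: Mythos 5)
Your proposal is correct and follows essentially the same route the paper indicates: the paper refers to the proof of Theorem 1(A) in \cite{Us2017}, which is precisely this ergodic-theorem, rescaling, and thresholding reduction, with the new Theorem~\ref{thm:main}(A) substituted for Lemma~1 to remove the $\log\log$ loss. The only point worth making explicit is that the constant $\eps_2$ in (\ref{eq:zero}) may be taken small (the Main Lemma holds for all sufficiently small $\eps_d$, with constants depending on $\eps_d$), so the elementary inequality $\lambda_1(E)\gtrsim\sqrt{m_2(E)}$ does indeed beat $\eps_2 L$ in your final step.
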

To prove this corollary, one may use the original proof of Theorem 1 part (A) in \cite{Us2017}, and replace the use of Lemma 1 with part (A) of Theorem \ref{thm:main} above.

Nevertheless, the construction, described in Theorem \ref{thm:main} part (B) above, cannot be used to construct a translation invariant probability measure with a similar upper bound. For more details see the end of Section \ref{sec:upper}.
\subsection{An overview of the paper: Methods and Tools}
\subsubsection{The proof of Theorem \ref{thm:main} part (A)} Given a subharmonic function $u$, there is some connection between bounds on the harmonic measure of its zero set in a large ball, $\omega(0,Z_u,\; B(0,R)\setminus Z_u)$, and estimates on the growth of the function $u$ in $B(0,R)$ (for definitions and basic properties of harmonic measures see for example \cite{Haywood} chapter 3.6).  Though this connection is mostly conceptual, it is not surprising that some of the tools, used in this paper, are also used to estimate harmonic measures.\\
We use a stopping time argument inspired by the one appearing in a paper by Jones and Makarov (see \cite{JoneMakarov1995}).\\
Jones and Makarov's work in \cite{JoneMakarov1995} is very influential in modern geometric function theory. For example, A.Baranov and H.Hedenmalm use some of the techniques in this paper to study integral means spectrum of conformal mappings (see \cite{Baranov2008}).

The proof of Theorem \ref{thm:main} part (A) is based on a main lemma, which essentially produces a lower bound on the quotient $\frac{M_u([-N,N]^d)}{M_u(I)}$ for every $N$ large enough and for at least half of the basic cubes $I\subset[-N,N]^d$.

\begin{figure}[htp]
\begin{minipage}[c]{.55\textwidth}
The main idea of the proof is as follows- for every cube $I$ we look at the growing sequence of cubes concentric with  $I$, and contained in $[-N,N]^d$ (see Figure \ref{fig:layers} to the right). 
We then choose a subsequence of this sequence of cubes, say $\bset{Q_j}$, satisfying that for some $\delta_d>0$, for all $j$, for every $\xi\in \partial Q_j$ there exists $r_\xi$ so that on one hand $B(\xi,r_{\xi})\subset Q_{j+1}$ and on the other hand
$$
\underset j\inf\underset {\xi\in\partial Q_j}\inf\; \frac{\lambda_{d-1}\bb{Z_u\cap B_d(\xi,r_{\xi})}}{r_{\xi}^d}\ge \delta_d.
$$
Using a Observation \ref{obs:jump}, there exists a constant $\alpha_d$ so that if $x_j\in \partial Q_j$ satisfies that $u(x_j)=M_u(Q_j)$, and $r_j=r_{x_j}$ then
\end{minipage}\hfill
\begin{minipage}[c]{.4\textwidth}
\centering
\includegraphics[scale=0.35]{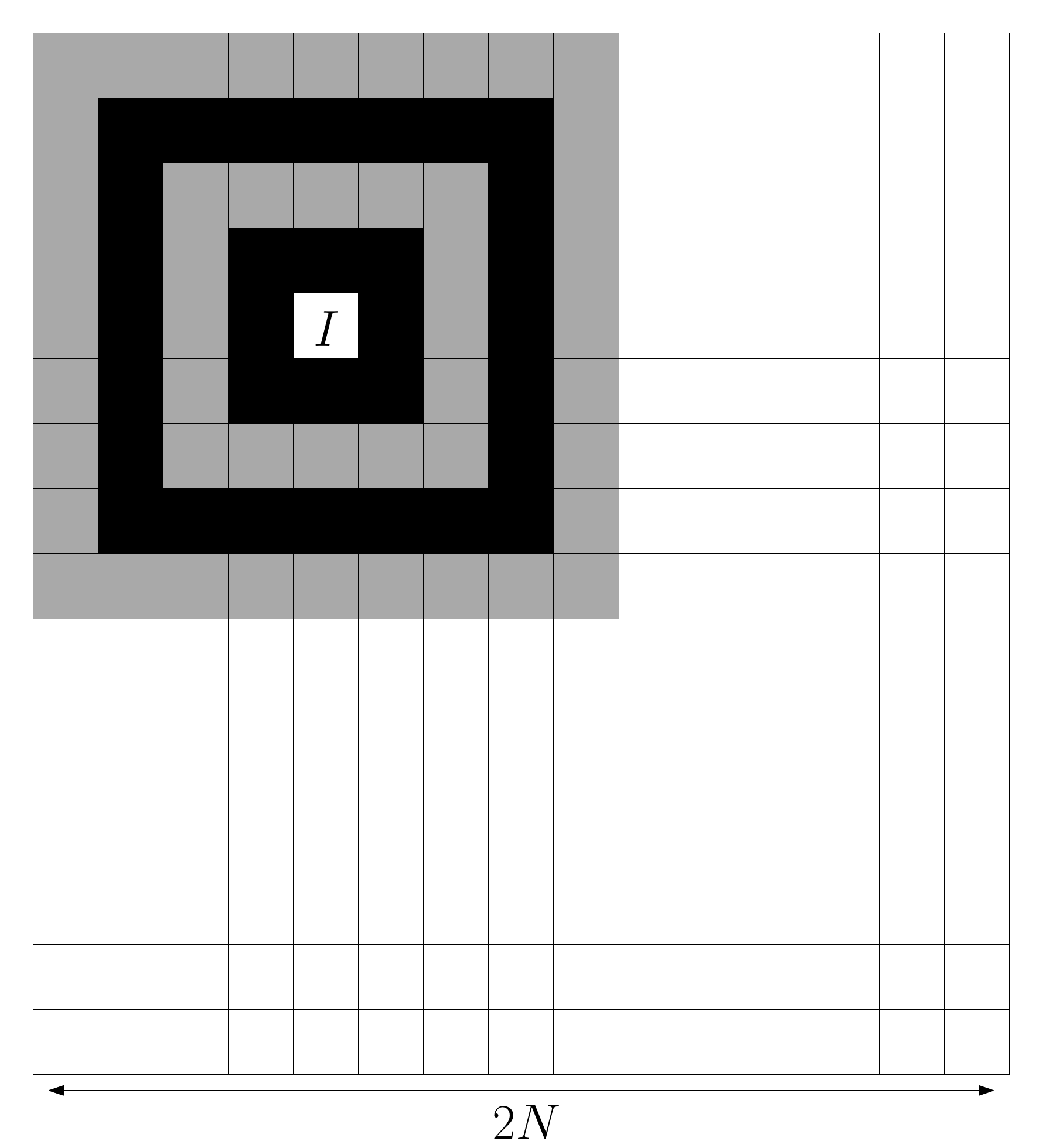}
\caption{In this figure one can see the layers about the cube $I$ which are coloured alternately.}
\label{fig:layers}
\end{minipage}
\end{figure}
\vspace{-4em} 
$$
M_u(Q_j)=u(x_j)\le M_u\bb{B(x_j,r_j)}\cdot e^{-\alpha_d\delta_d}\le M_u\bb{Q_{j+1}}\cdot e^{-\alpha_d\delta_d},
$$
where the last inequality is by inclusion. Doing so recursively we conclude that
$$
M_u(I)\le e^{-\alpha_d\delta_d}M_u(Q_1)\le\cdots\le e^{-\#\bset{Q_j}\alpha_d\delta_d}M_u([-N,N]^d).
$$
It is therefore left the bound from bellow the number of elements in the subsequence $\bset{Q_j}$ to conclude the proof. Here various combinatorial methods are used.\\
It is not the particular statement of this lemma, but the methods used in the proof that are interesting on their own and may find additional applications in many fields. The proof is completely elementary and assumes very basic knowledge of potential theory. The proof of the lower bound, 
Theorem \ref{thm:main} part (A), can be found in Section \ref{sec:lower}, and the formal statement of the lemma and its proof can be found in Subsection \ref{sec:main_lem}.
\subsubsection{The proof of Theorem \ref{thm:main} part (B)} We would have loved to use a construction which is similar to the one presented in \cite{Us2017}. Never the less, such a construction would have two main issues. The first is that the `self similarity' of the function leads to accumulating rogue basic cubes from smaller scales. This accumulation grows like the volume measure and can never work for $f(t)\ll t^d$ (in fact, even for $f(t)=t^d$ we do not get the optimal bound using this method). The second issue is that this method uses hyperplanes to separate the similar copies of the function. This will not work for any function $t\ll f(t)\ll t^{d-1}$ in dimensions higher than two.\\
We will first show that for every $R>0$ there exists a subharmonic function, $u$, such that $u$ oscillates in all but at most $f(2R)$ basic cubes in $[-R,R]^d$. This is only possible on bounded domains, as we use a parameter, which must be positive, while it tends to zero as the diameter of the set tends to infinity. `Glueing' different functions solves the issue of accumulating rogue cubes. To solve the second issue, we will use functions which are non-zero only on tube-like sets, and the `glueing' will be done along tubes as well. The construction can be found in Section \ref{sec:upper}.
\subsubsection{Appendix- another proof of Theorem \ref{thm:main} part (A)} We bring here another proof of Theorem \ref{thm:main} part (A). This proof uses Jones and Makarov's main lemma in \cite{JoneMakarov1995} as a black box, instead of using the ideas presented in it. It was suggested to the author by M. Sodin, and uses potential theory. The proof can be found in Appendix B.
\subsection{Acknowledgements}
The author is lexicographically grateful to Ilia Binder, Persi Diaconis, Eugenia Malinnikova, and Mikhail Sodin for several very helpful discussions. In particular, I would like to extend my gratitude to Chris Bishop who insisted on the connection between the problem presented here and the paper by Jones and Makarov, \cite{JoneMakarov1995}.
\section{Lower bound}\label{sec:lower}
We begin this section with the statement of the main lemma. This lemma is the essence of the proof of the lower bound:
\begin{lem}\label{lem:Jones-Makarov}
Let $Q:=\left[-\frac N2,\frac N2\right]^d$ for some $N\gg1$, and let $u$ be a subharmonic function defined in a neighbourhood of $Q$. Denote by $\mathcal G$ the collection of all basic cubes in $Q$, and let $\mathcal E=\mathcal E_u$ denote sub-collection of cubes that do not satisfy property (\ref{eq:zero}),
$$
\mathcal E=\mathcal E_u:=\bset{I\in \mathcal G; \lambda_{d-1}(I\cap Z_u)<\eps_d}.
$$
There exists a constant $c_0$, depending on the dimension alone, so that if $\#\mathcal E\le c_0 N^d$, then at least half of the basic cubes $I\in\mathcal G$ satisfy:
$$
\frac{M_u(I)}{M_u(Q)}\le\exp\bb{-c_d\frac N{1+\bb{\frac{\#\mathcal E}N}^{\frac1{d-1}}}\log^{\frac d{d-1}}\bb{2+\frac{\#\mathcal E}N}},
$$
where $c_d$ is a constant which depends on the dimension alone as well.
\end{lem}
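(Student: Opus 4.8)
The plan is to iterate Observation~\ref{obs:jump} along a carefully chosen sequence of concentric cubes, exactly as sketched in the overview, and to reduce the whole statement to a combinatorial counting problem about how many ``good'' layers one can fit around a fixed basic cube $I$. Fix a basic cube $I\subset Q$, and for an integer $k$ let $I^{(k)}$ denote the cube concentric with $I$ of edge length $2k+1$ (intersected with $Q$). Moving from $I^{(k)}$ to $I^{(k+1)}$ adds a shell of width one; that shell is a union of $O(k^{d-1})$ basic cubes. I will call the step $k\mapsto k+1$ \emph{productive} if a definite positive fraction of the basic cubes in that shell lie outside $\mathcal E$, i.e.\ satisfy property~(\ref{eq:zero}). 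For a productive step, every point $\xi\in\partial I^{(k)}$ has a ball $B(\xi,r_\xi)$ with $r_\xi\sim_d 1$, contained in $I^{(k+1)}$, and meeting enough of $Z_u$ that $\lambda_{d-1}(Z_u\cap B(\xi,r_\xi))/r_\xi^d\ge\delta_d$; applying Observation~\ref{obs:jump} (rescaled to the ball $B(\xi,r_\xi)$) at the point $x_k\in\partial I^{(k)}$ where $u$ attains $M_u(I^{(k)})$ gives
$$
M_u(I^{(k)})\le M_u\bigl(B(x_k,r_{x_k})\bigr)e^{-\alpha_d\delta_d}\le M_u(I^{(k+1)})e^{-\alpha_d\delta_d}.
$$
Chaining this over all productive steps from $I$ up to $Q$ yields
$$
\frac{M_u(I)}{M_u(Q)}\le \exp\bigl(-\alpha_d\delta_d\cdot P(I)\bigr),
$$
where $P(I)$ is the number of productive steps in the tower over $I$. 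So the theorem reduces to: for at least half of the basic cubes $I\in\mathcal G$,
$$
P(I)\;\gtrsim_d\;\frac{N}{1+(\#\mathcal E/N)^{1/(d-1)}}\,\log^{\frac{d}{d-1}}\!\Bigl(2+\tfrac{\#\mathcal E}{N}\Bigr).
$$

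The combinatorial heart is to show that the rogue cubes in $\mathcal E$, of which there are at most $c_0N^d$, cannot block too many shells around too many centers $I$. I would argue by a double counting / averaging scheme. A step $k\mapsto k+1$ over $I$ fails to be productive only if more than a $(1-\eta_d)$-fraction of the $\sim_d k^{d-1}$ basic cubes in that shell belong to $\mathcal E$; each such ``blocked'' shell therefore consumes $\gtrsim_d k^{d-1}$ elements of $\mathcal E$. Summing a would-be-blocked tower over $I$ of cubes with radii $k=1,2,\dots,m$ costs $\gtrsim_d \sum_{k\le m}k^{d-1}\sim_d m^{d}$ rogue cubes; since a given rogue cube lies in the shell of radius $k$ around $I$ only when $I$ is within distance $k$ of it, it is charged to at most $\sim_d k^{d-1}$ centers at that radius. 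Balancing ``total rogue budget $\#\mathcal E$'' against ``number of centers $I$ we are allowed to sacrifice'' (up to half of the $N^d$ cubes in $\mathcal G$), one finds that for at least half the centers the number of blocked steps before reaching radius $\sim N$ is controlled, and hence $P(I)$ is large. The logarithmic power $\log^{d/(d-1)}(2+\#\mathcal E/N)$ appears because the blocked shells, to be most economical with the rogue budget, should be placed at the largest possible radii; optimizing the radius at which blocking occurs against the cost $m^d$ versus the gain, together with the fact that $\sum 1/k$ over a dyadic range is logarithmic, produces the stated exponent. (When $\#\mathcal E\lesssim N$ the budget is too small to block anything essential and one simply gets $P(I)\gtrsim_d N$, matching the first branch of the bound.)

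Concretely I would proceed in this order: (1) fix $\delta_d,\eta_d$ and make precise the ``productive step'' definition and the rescaled version of Observation~\ref{obs:jump} on balls of radius $\sim_d1$; (2) establish the deterministic telescoping inequality $M_u(I)/M_u(Q)\le e^{-\alpha_d\delta_d P(I)}$; (3) set up the bipartite incidence count between basic cubes $I\in\mathcal G$ and rogue cubes in $\mathcal E$, weighting a blocked shell of radius $k$ around $I$ by its rogue content and by $1/k^{d-1}$; (4) run the averaging argument to conclude that the set of centers $I$ with $P(I)$ below the target threshold has cardinality $\le\frac12\#\mathcal G$, choosing $c_0$ small enough that the budget $c_0N^d$ is consistent with this; (5) assemble the pieces. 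The main obstacle I anticipate is step~(4): getting the \emph{sharp} dependence $N/(1+(\#\mathcal E/N)^{1/(d-1)})$ together with the correct power $d/(d-1)$ of the logarithm, rather than a crude bound, requires choosing the right geometric scale for the counting (presumably a dyadic decomposition of the radii, with the rogue budget allocated across scales by a Hölder/convexity argument) and verifying that the extremal configuration of rogue cubes really is the ``all rogue cubes pushed to the outer layers'' one. Everything else — the potential theory — is already packaged in Observation~\ref{obs:jump} and Claim~\ref{clm:HarmonicBnd}.
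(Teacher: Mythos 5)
There is a genuine gap in your ``productive step'' criterion, and it is precisely the point the paper's construction is designed to avoid. You declare the step $k\mapsto k+1$ productive if a positive fraction of the basic cubes in the unit-width shell lies outside $\mathcal E$, and then claim that for such a step \emph{every} $\xi\in\partial I^{(k)}$ has a ball $B(\xi,r_\xi)$ with $r_\xi\sim_d1$ meeting enough of $Z_u$. That implication fails: a positive fraction of good cubes in the shell can be concentrated on one side, while the maximizing point $x_k$ sits in a cluster of rogue cubes on the other side, so no $O(1)$-ball around $x_k$ carries the required $(d-1)$-Hausdorff content. To genuinely guarantee the ball property at scale $\sim1$ you would need the entire unit neighbourhood of $x_k$ to avoid $\mathcal E$, at which point a \emph{single} rogue cube in the shell (adjacent to wherever $u$ peaks) can block the step. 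Then the rogue budget needed to block a shell is $O(1)$, not $\gtrsim_d k^{d-1}$, your double counting collapses, and the resulting estimate is far from the sharp one.

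The paper's proof is built around exactly the fix this forces: a \emph{variable}-scale ball. It defines $\rho(x)$ via a density condition on the good set $K=\bigcup_{I\notin\mathcal E}I$, so that the radius at which one can invoke Observation~\ref{obs:jump} at $x$ grows when $x$ sits in a rogue region. Consecutive cubes $Q_j,Q_{j+1}$ in the chain are then chosen with gap at least $M(\kappa_j)\ge\sup_{\partial Q_j}\rho$, not gap one. This turns the problem into counting how many such variable-width annuli fit inside $Q$, which is $\sum_k 1/M(k)$ in an averaged sense (your unit-step count would be $\sum_k 1$, i.e.\ $\sim N$, which cannot see the $\log^{d/(d-1)}$). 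The function $M$ is then constructed from a Whitney/dyadic decomposition of $Q$ adapted to $\rho$ (Step~2), with the key bound $\sum_{m\ge m_0}2^{md}n_m\lesssim\#\mathcal E$ coming from the Hardy--Littlewood maximal theorem applied to $\indic{Q\setminus K}$, and the final lower bound on $\sum1/M(k)$ is extracted by Jensen's inequality (Step~3). None of this is captured by a fixed-radius productivity count, so you will need to replace your step~(1) and your incidence scheme in steps~(3)--(4) by the variable-scale machinery rather than tune constants.
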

We will first show how the main lemma implies the proof of Theorem \ref{thm:main} part (A):
\subsection{The proof of Theorem \ref{thm:main} part (A)}
\begin{proof}[\nopunct]
Let $c_0$ be the constant defined in the Main Lemma, Lemma \ref{lem:Jones-Makarov}, let $f$ be a monotone increasing function satisfying that $f(t)\le c_0t^d$ for all large enough $t$, and let $u$ be an $f$-oscillating subharmonic function. Then, using the notation of the lemma, for all $N$ large enough
\begin{eqnarray*}
\#\mathcal E&=&\#\bset{I\in \mathcal G, I\text{ is a basic cube,}\atop\lambda_{d-1}(I\cap Z_u)<\eps_d}=\#\bset{I\in\mathcal G,\; I\text{ is a basic cube which}\atop\text{does NOT satisfy property (\ref{eq:zero})}}\\
&\le&\#\bset{I\in\mathcal G,I\text{ is a rogue basic cube}}=f(N)\cdot\gamma\bb{Q}<f(N)\le c_0N^d.
\end{eqnarray*}
We may therefore apply the Main Lemma to conclude that at least half of the basic cubes in $Q$ satisfy
\begin{eqnarray*}
\frac{M_u(I)}{M_u\bb{Q}}&\le&\exp\bb{-c_d\frac{N}{1+\bb{\frac{\#\mathcal E}N}^{\frac1{d-1}}}\cdot\log^{\frac d{d-1}}\bb{2+\frac{\#\mathcal E}N}}.
\end{eqnarray*}
To get a similar bound  with $f(N)$ instead of $\#\mathcal E$, we note that the function $\psi$ defined by
$$
\psi(x):=\frac1{1+x^{\frac1{d-1}}}\cdot\log^{\frac d{d-1}}\bb{2+x}
$$
is monotone decreasing in some neighbourhood of $\infty$, which depends on the dimension. Combining this with the fact that $\#\mathcal E\le f(N)$, we see that for every $N$ large enough
\begin{eqnarray*}
\frac{M_u(I)}{M_u\bb{Q}}&\le&\exp\bb{-c_d\frac{N}{1+\bb{\frac{\#\mathcal E}N}^{\frac1{d-1}}}\cdot\log^{\frac d{d-1}}\bb{2+\frac{\#\mathcal E}N}}=\exp\bb{-c_d N\cdot \psi\bb{\frac{\#\mathcal E}N}}\\
&\le&\exp\bb{-c_d N\cdot \psi\bb{\frac{f(N)}N}}=\exp\bb{-c_d\frac{N}{1+\bb{\frac{f(N)}N}^{\frac1{d-1}}}\cdot\log^{\frac d{d-1}}\bb{2+\frac{f(N)}N}}.
\end{eqnarray*}
On the other hand, as without loss of generality $c_0<\frac12$, more than half of the basic cubes in $Q$ satisfy property (\ref{eq:max}), that is $M_u(I)\ge 1$. We conclude that there exists at least one cube satisfying Property (\ref{eq:max}) and the inequality above, implying that
\begin{eqnarray*}
M_u\bb{Q}&\ge&\exp\bb{c_d\frac N{1+\bb{\frac{f(N)}N}^{\frac1{d-1}}}\log^{\frac d{d-1}}\bb{2+\frac{f(N)}N}}M_u(I)\\
&\ge&\exp\bb{c_d\frac N{1+\bb{\frac{f(N)}N}^{\frac1{d-1}}}\log^{\frac d{d-1}}\bb{2+\frac{f(N)}N}}.
\end{eqnarray*}
As this holds for every $N\in\N$ large enough,
$$
\limitinf R\infty\frac{\log\bb{M_u(R)}}{\frac{R}{1+\bb{\frac{f(R)}R}^{\frac1{d-1}}}\cdot\log^{\frac d{d-1}}\bb{2+\frac{f(R)}R}}>0
$$
concluding our proof.
\end{proof}
It is left to prove the Main Lemma:
\subsection{The proof of Main Lemma}\label{sec:main_lem}
\vspace{-2em}
\begin{figure}[htp]
\begin{minipage}[b]{.4\textwidth}
Let $K$ denote the approximation from above of the essential part of the set $Z_u\cap Q$ by elements of $\mathcal G\setminus\mathcal E$,
$$
K:=\underset{I\in \mathcal G\atop I\nin\mathcal E}\bigcup I
$$
(see Figure \ref{fig:K} to the right), and define the function 
$\rho(x):= \max\bset{2\sqrt d,r(x)},$ for
\end{minipage}\hfill
\begin{minipage}[b]{.53\textwidth}
\centering
\includegraphics[scale=0.85]{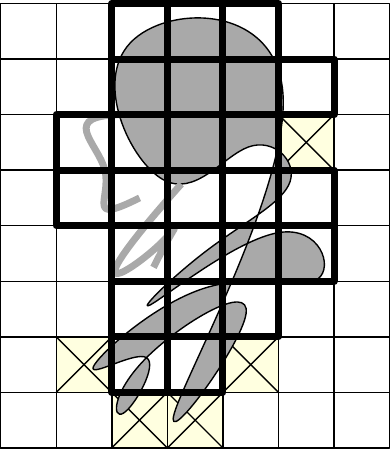}
\caption{The grey area is the set $Z_u$, the cubes with bold boundary are the components of $K$. The cubes marked with X are basic cubes, which intersect $Z_u$ but do not satisfy (\ref{eq:zero}) for $\eps_d=\frac14$.}
\label{fig:K}
\end{minipage}
\end{figure}
\begin{proof}[\unskip\nopunct]
\vspace{-10mm}
$$
r(x):=\inf\bset{t>0,\; m_d\bb{K\cap B\bb{x,\frac t2}}\ge \delta_0\cdot m_d\bb{B(0,1)}\cdot t^d},
$$
where $m_d$ denotes Lebesgue's measure on $\R^d$, and $\delta_0$ is some constant, which depends on the dimension alone.
\begin{obs}\label{obs:Z_u}
There exists a constant $\delta_d\in(0,1)$, which depends on the dimension $d$, so that for every $x$
 $$
 \frac{\lambda_{d-1}\bb{Z_u\cap B(x,\rho(x))}}{\rho(x)^d}\ge \delta_d.
 $$
\end{obs}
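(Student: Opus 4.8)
The plan is to analyze the two cases in the definition of $\rho(x)$ separately. Recall $\rho(x)=\max\{2\sqrt d,\,r(x)\}$, so either $\rho(x)=2\sqrt d$ or $\rho(x)=r(x)>2\sqrt d$. In the first case, the ball $B(x,\rho(x))=B(x,2\sqrt d)$ is large enough to contain some basic cube $I$ as a subset (any ball of radius $2\sqrt d$ does). If this $I$ lies in $\mathcal G\setminus\mathcal E$, then by the very definition of $\mathcal E$ it satisfies property~(\ref{eq:zero}), i.e.\ $\lambda_{d-1}(I\cap Z_u)\ge\eps_d$, and since $I\subset B(x,\rho(x))$ and $\rho(x)^d=(2\sqrt d)^d$ we get the bound with $\delta_d=\eps_d/(2\sqrt d)^d$. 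The only thing to check here is that one can always find such a basic cube inside $B(x,2\sqrt d)$ that also belongs to $K$; this is where the constant $\delta_0$ (and hence the geometry of $K$) must be used — if $\rho(x)=2\sqrt d$ came from $r(x)\le 2\sqrt d$, then $m_d(K\cap B(x,\sqrt d))\ge\delta_0 m_d(B(0,1))(2\sqrt d)^d$, which forces $K$ to genuinely occupy a definite fraction of a ball of radius $\sqrt d$ around $x$, and for $\delta_0$ chosen appropriately this guarantees a full basic cube of $K$ inside $B(x,2\sqrt d)$.

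In the second case $\rho(x)=r(x)>2\sqrt d$, and by the definition of $r(x)$ as an infimum (together with the closedness of $K$, which is a finite union of basic cubes, so the infimum is attained) we have the near-equality $m_d\bigl(K\cap B(x,\tfrac{r(x)}2)\bigr)\ge\delta_0\,m_d(B(0,1))\,r(x)^d$, i.e.\ a definite proportion of $B(x,r(x)/2)$ is covered by $K$. Now $K$ is a union of basic cubes $I\notin\mathcal E$, each of which carries $\lambda_{d-1}(I\cap Z_u)\ge\eps_d$. The number of such cubes meeting $B(x,r(x)/2)$ is at least of order $\delta_0\,r(x)^d$ (each contributes volume $\le 1$), so summing the $(d-1)$-content over these cubes — using the subadditivity/superadditivity properties of Hausdorff content appropriately, or more safely a Vitali-type disjointness to avoid overcounting — yields $\lambda_{d-1}(Z_u\cap B(x,r(x)))\gtrsim_d \delta_0\,\eps_d\,r(x)^d$, which is the claim with $\delta_d\sim_d\delta_0\eps_d$.

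The one genuinely delicate point is that $(d-1)$-dimensional Hausdorff content is only subadditive, so from "many basic cubes each with $\lambda_{d-1}(I\cap Z_u)\ge\eps_d$" one cannot immediately conclude a large lower bound on $\lambda_{d-1}$ of the union. I expect this to be the main obstacle, and the way around it is that content \emph{does} behave well from below when the pieces live in well-separated cubes: if $I_1,\dots,I_m$ are basic cubes, then $\lambda_{d-1}\bigl(\bigcup_k (I_k\cap Z_u)\bigr)\gtrsim_d \max_k \lambda_{d-1}(I_k\cap Z_u)$ is far too weak, so instead I would fix a projection — project onto one coordinate hyperplane chosen so that the projection of $K\cap B(x,r(x)/2)$ has large $(d-1)$-measure (possible since $K$ fills a definite fraction of the ball, and one of the $d$ coordinate projections must see a set of $(d-1)$-measure $\gtrsim_d r(x)^{d-1}$), and then use that $\lambda_{d-1}(Z_u\cap B(x,r(x)))\ge \lambda_{d-1}$ of its projection, combined with a lower bound relating $\lambda_{d-1}(I\cap Z_u)\ge\eps_d$ to the projection of $I\cap Z_u$ being nonempty with definite content. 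Choosing $\delta_0$ small enough relative to $\eps_d$ and the projection constants closes the argument; the constant $\delta_d$ produced is the minimum of the constants coming from the two cases.
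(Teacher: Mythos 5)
Your Case 1 analysis is on the right track but over-engineers the argument. No special choice of $\delta_0$, and no ``definite fraction'' of a ball, is needed at that stage. All the paper uses is that $m_d\bigl(K\cap B(x,t/2)\bigr)>0$ for some $t\le 2\sqrt d$, hence $B(x,t/2)$ meets some basic cube $I\in\mathcal G\setminus\mathcal E$; since such a cube has diameter $\sqrt d$, it automatically lies inside $B(x,t/2+\sqrt d)\subseteq B(x,2\sqrt d)$, and property (\ref{eq:zero}) for that one cube $I$ finishes the case. The constant $\delta_0$ plays its role elsewhere (in Claim \ref{clm:large_sqrs_measure}, via the maximal function theorem), not here.

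In Case 2 you have correctly identified the genuinely delicate step. The paper's proof silently invokes a superadditivity of $(d-1)$-Hausdorff content across a family of disjoint basic cubes (it writes $\lambda_{d-1}\bigl(Z_u\cap B(x,\rho/2+\sqrt d)\bigr)\ge\eps_d\cdot\#\{I\notin\mathcal E:\,I\cap B(x,\rho/2)\neq\emptyset\}$). That inequality cannot hold in general: the left-hand side is trivially at most $C_d\,\rho^{d-1}$ (cover the ball by itself), while the asserted right-hand side is of order $\rho^d$, a contradiction once $\rho$ is large. By the same comparison, the displayed inequality of the Observation, with $\rho(x)^d$ in the denominator, can only hold for $\rho(x)$ bounded; what the later application, the rescaled Observation \ref{obs:jump}, actually needs is the exponent $d-1$. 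So you are right not to trust this step. Unfortunately your proposed repair via a coordinate projection has a hole of its own. The inequality $\lambda_{d-1}\bigl(Z_u\cap B(x,\rho)\bigr)\ge\lambda_{d-1}\bigl(\pi(Z_u\cap B(x,\rho))\bigr)$ is fine (orthogonal projections are $1$-Lipschitz), but the other ingredient --- that $\lambda_{d-1}(I\cap Z_u)\ge\eps_d$ forces $\pi(I\cap Z_u)$ to have definite $(d-1)$-content for the fixed coordinate direction $\pi$ --- is false. The set $Z_u\cap I$ could perfectly well be a flat $(d-1)$-dimensional disk parallel to the coordinate hyperplane you project onto; it then has content $\ge\eps_d$ yet projects to a single point. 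Nothing constrains the different cubes $I\subset K$ to be ``spread'' in a common direction, so no single $\pi$ works for all of them, and choosing $\pi$ cube-by-cube destroys the union argument you need. In short, the gap you flagged is real, and the projection idea does not close it.
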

\begin{proof}
If $\rho(x)=2\sqrt d$, then there exists $t\in\bb{0,2\sqrt d}$ so that
$$
m_d\bb{K\cap B\bb{x,\frac t2}}\ge\delta_0\cdot m_d\bb{B(0,1)}\cdot t^d,
$$
and in particular $B\bb{x,\frac t2}$ intersects $K$, implying that there exists $I\in \mathcal G\setminus\mathcal E$ so that
$$
I\subset B\bb{x,\frac t2+\sqrt d}\subseteq B\bb{x,2\sqrt d},
$$
since $t\le2\sqrt d\Rightarrow\frac t2+\sqrt d\le 2\sqrt d$. As $I\nin\mathcal E$, it satisfies property (\ref{eq:zero}) and therefore
$$
\lambda_{d-1}\bb{Z_u\cap B\bb{x,2\sqrt d}}\ge \lambda_{d-1}\bb{Z_u\cap B\bb{x,\frac t2+2\sqrt d}}\ge \lambda_{d-1}(Z_u\cap I)\ge\eps_d,
$$
implying that
$$
\frac{\lambda_{d-1}\bb{Z_u\cap B(x,\rho(x))}}{\rho(x)^d}\ge\frac{\eps_d}{2^d d^{\frac d2}}.
$$
If $\rho(x)>2\sqrt d$, then $m_d\bb{K\cap B\bb{x,\frac{\rho(x)}2}}\ge \delta_0\cdot m_d\bb{B(0,1)}\cdot\rho(x)^d$, implying that $B\bb{x,\frac{\rho(x)}2}$ intersects at least $\left\lceil \delta_0\cdot m_d\bb{B(0,1)}\cdot\rho(x)^d\right\rceil$ elements in $\mathcal G\setminus \mathcal E$, by the pigeon-hole principle. On the other hand, $\rho(x)>2\sqrt d$ implying that $\frac{\rho(x)}2+\sqrt d\le\rho(x)$. Combining these two observations together, we conclude that
\begin{eqnarray*}
\lambda_{d-1}\bb{Z_u\cap B\bb{x,\rho(x)}}&\ge& \lambda_{d-1}\bb{Z_u\cap B\bb{x,\frac{\rho(x)}2+\sqrt d}}\ge \eps_d\cdot \#\bset{I\nin \mathcal E, I\cap B\bb{x,\frac{\rho(x)}2}\neq\emptyset}\\
&\ge& \eps_d\cdot\left\lceil\delta_0\cdot m_d\bb{B(0,1)}\cdot\rho(x)^d\right\rceil\ge \eps_d\cdot\delta_0\cdot m_d\bb{B(0,1)}\cdot\rho(x)^d.
\end{eqnarray*}
We conclude the proof by choosing
$$
\delta_d:=\eps_d\cdot\min\bset{2^{-d}\cdot d^{-\frac d2},\delta_0\cdot m_d\bb{B(0,1)}}.
$$
\end{proof}
For every cube $I$ we denote by $A(I,k)$ the $k$th layer of basic cubes surrounding $I$ (see for example the alternating grey and black cubed annuli in Figure \ref{fig:layers} in the introduction). Formally, if $I=\prodit j 1 d \left[n_j,n_j+1\right)$, then
$$
A(I,k)=\prodit j 1 d \left[n_j-k,n_j+1+k\right)\setminus \prodit j 1 d \left[n_j-(k-1),n_j+k\right).
$$
For example in dimension $d=2$ if $I=[0,1)^2$, then $A(I,1)=[-1,2)^2\setminus[0,1)^2$.

We will construct a monotone non-decreasing function  $M:\N\rightarrow\N$, so that for every $k$ fixed it satisfies
\begin{equation*}
\tag{M} \#\bset{I\in\mathcal G, k\in K_I}\ge\frac{11}{12}N^d, \label{eq:M}
\end{equation*}
where $K_I$ is a set defined for every basic cube $I\in\mathcal G$ by
$$
K_I:=\bset{k\in\bset{1,\cdots,\frac {N}{6d}},\; \forall x\in Q\cap A(I,k),\rho(x)\le\ M(k)}.
$$
For now, we assume such a function exists.\\
For every cube $I\in \mathcal G$, define the monotone increasing sequence $\bset{\kappa_j}=\bset{\kappa_j(I)}$ in the following way:
\begin{eqnarray*}
\kappa_1&:=&\min\bset{k\in K_I}\\
\kappa_j&:=&\min\bset{k\in K_I,\; k>\kappa_{j-1}+M(\kappa_{j-1})}.
\end{eqnarray*}
We choose a cube $I$ so that $A\bb{I,\frac {N}{6d}}\subset Q$, and let $\gamma_j$ denote the outer boundary of $A(I,\kappa_j)$, and $Q_j$ denote the closed cube whose boundary is $\gamma_j$. By the way the set $K_I$ was defined, for every $x\in\gamma_j$,
$$
\rho(x)\le M(\kappa_j)\le dist\bb{\gamma_j,\gamma_{j+1}}\Rightarrow B(x,\rho(x))\subset Q_{j+1}.
$$
By using a rescaled version of Observation \ref{obs:jump}, we deduce that, maybe for a different constant $\delta_d$, we have
$$
u(x)\le M_u(B(x,\rho(x)))\cdot e^{-\delta_d}\le M_u(\gamma_{j+1})e^{-\delta_d}= M_u(Q_{j+1})e^{-\delta_d}.
$$
Applying this argument inductively and using the maximum principle, if $Q_0$ denotes the cube $I$, then
$$
\frac{M_u(I)}{M_u(Q)}=\prodit j 0 {\#\bset{\kappa_j}-1} \frac{M_u(Q_j)}{M_u(Q_{j+1})}\le \exp\bb{-\#\bset{\kappa_j}\cdot\delta_d}.
$$
To conclude the proof, we need to bound from bellow $\#\bset{\kappa_j}$ for all but at most half of the basic cubes $I\in \mathcal G$.\\
Note that if the function $M$ is too big, the sequence $\bset{\kappa_j}$ will be a very short sequence, for most cubes. On the other hand, the smaller the function $M$ is, the harder it becomes to satisfy property (\ref{eq:M}).

We shall conclude the proof in three steps:
\vspace{-1em}
\begin{enumerate}[label=Step \arabic*:]
\item
Show that for at least $\frac{10}{11}N^d$ of the basic cubes $I\in\mathcal G$
\begin{equation*}
\tag{$\S$}\#\bset{\kappa_j(I)}\ge \frac1{24}\sumit k 1 {\frac {N}{6d}}\frac 1{M(k)}.\label{eq:Mk_bound}
\end{equation*}
\item Define the function $M$ and prove that it satisfies condition (\ref{eq:M}).
\item Bound the sum $\sumit k 1 {\frac {N}{6d}}\frac 1{M(k)}$ from bellow.
\end{enumerate}
\subsubsection{Step 1:  for at least $\frac{10}{11}N^d$ of the basic cubes $I\in\mathcal G$, (\ref{eq:Mk_bound}) holds.} 
\begin{proof}[\unskip\nopunct]
For every basic cube $I\in\mathcal G$ let
$$
B(I):=\underset{k\in K_I}\sum\frac1{M(k)},
$$
and define the collection
$$
X:=\bset{I,\; B(I)\ge \frac 1{12}\sumit k 1 {\frac {N}{6d}}\frac1{M(k)}}.
$$
We will first show that every basic cube $I\in X$ satisfies (\ref{eq:Mk_bound}). Since $M$ is a monotone non-decreasing function with $M(1)\ge 1$, and by the way the sequence $\bset{\kappa_j}$ was defined, for every $I\in X$
\begin{eqnarray*}
\sumit k 1 {\frac {N}{6d}}\frac 1{M(k)}&\le& 12 B(I)=12\underset{k\in K_I}\sum\frac1{M(k)}=12\sumit j 1 {\#\bset{\kappa_j}} \underset{k\in K_I\atop \kappa_j\le k\le \kappa_j+M(\kappa_j)}\sum\frac 1{M(k)}\\
&\le&12\sumit j 1 {\#\bset{\kappa_j}} \sumit k{\kappa_j}{\kappa_j+M(\kappa_j)}\frac1{M(k)}\le 12\sumit j 1 {\#\bset{\kappa_j}}\frac{M(\kappa_j)+1}{M(\kappa_j)}\le24\#\bset{\kappa_j}.
\end{eqnarray*}
To conclude the proof it is left to show that $\#X\ge\frac{10}{11}N^d$.\\
Following property (\ref{eq:M}),
\begin{equation*}
\tag{$\Sigma$}\underset{I\in \mathcal G}\sum B(I)=\underset{I\in \mathcal G}\sum\underset{k\in K_I}\sum\frac1{M(k)}=\sumit k 1 {\frac {N}{6d}}\frac1{M(k)}\#\bset{I\in \mathcal G,\; k\in K_I}\ge\frac{11}{12}N^d\sumit k 1 {\frac {N}{6d}}\frac1{M(k)}.\label{eq:B(I)}
\end{equation*}
On the other hand, by the way the set $X$ was defined,
$$
B(I)\le \begin{cases}
		\frac 1{12}\sumit k 1 {\frac {N}{6d}}\frac1{M(k)}&, I \nin X\\
		\sumit k 1 {\frac {N}{6d}}\frac1{M(k)}&, I\in X
		\end{cases}.
$$
Then
\begin{eqnarray*}
\underset{I\in \mathcal G}\sum B(I)&=&\underset{I\in X}\sum B(I)+\underset{I\nin X}\sum B(I)\le \#X\sumit k 1{\frac N{6d}}\frac1{M(k)}+\bb{N^d-\#X}\frac1{12}\sumit k 1 {\frac {N}{6d}}\frac1{M(k)}\\
&=&\bb{\frac{11}{12}\#X+\frac1{12} N^d}\sumit k 1 {\frac{N}{6d}}\frac1{M(k)}\overset{\text{by (\ref{eq:B(I)})}}\le\frac{12}{11N^d}\bb{\frac{11}{12}\#X+\frac1{12}N^d} \underset{I\in \mathcal G}\sum B(I)=\bb{\frac{\#X}{N^d}+\frac1{11}}\underset{I\in \mathcal G}\sum B(I)\\
&\Rightarrow& \#X\ge\frac{10}{11}N^d,
\end{eqnarray*}
concluding the proof.
\end{proof}
\subsubsection{Step 2: constructing the function $M$:}
\vspace{-2em}
\begin{figure}[ht]
\begin{minipage}[b]{.5\textwidth}
The definition of the function $M$ will heavily depend on a collection of sets, $\mathcal M$, with special properties. To define the collection $\mathcal M$ we will need the following definition:
Given $k$ an interval $I$ is called {\it a dyadic interval of order $k$} if there exists $j$ so that $I=\pm 2^{j\cdot k}+\left[0,2^k\right)$. A cube $\mathcal C$ is called {\it a dyadic cube of order $k$} if there are $d$ dyadic intervals of order $k$, $I_1,I_2,\cdots,I_d$ so that $\mathcal C=\prodit j 1 d I_j$. We say a cube $\mathcal C$ is {\it a dyadic cube} if there exists $k$ so that $\mathcal C$ is a dyadic cube of order $k$ (see Figure \ref{fig:dyadic} to the right).
\end{minipage}\hfill
\begin{minipage}[b]{.46\textwidth}
\centering
\includegraphics[scale=0.53]{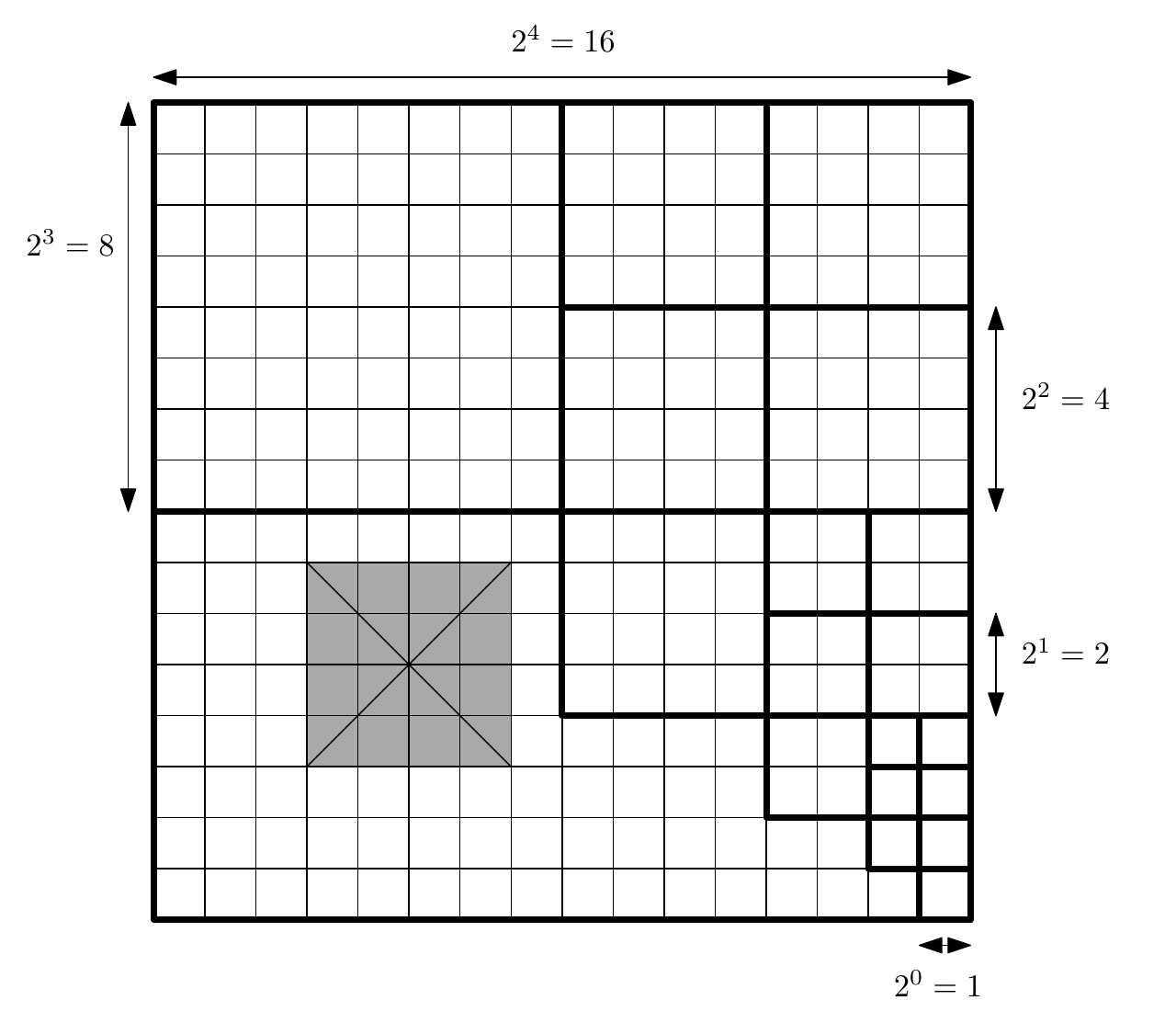}
\caption{This figure depicts dyadic cubes. The grey cube is not a dyadic cube even though it has edge-length 4.}
\label{fig:dyadic}
\end{minipage}
\end{figure}
\vspace{-1em}
\hspace{-1.8em}
Note that a dyadic cube of order $k$ is a disjoint union of $2^{d\bb{k-j}}$ dyadic cubes of order $j$, and that every two dyadic cubes of the same order are disjoint. This means we can define a partial order on the collection of dyadic cubes.
 
We assume without loss of generality that $N=2^{N_0}$, since for every other $N$ we can find $N_0$ such that  $2^{N_0}<N\le 2^{N_0+1}$, and the proof follows from this case with maybe a different constant $c_d$. For every $I\in\mathcal G$ we assign $\rho(I)=\underset{x\in I}\sup\;\rho(x)$. Then $\rho$, defined on elements of $\mathcal G$, assumes a finite number of values, and one may order the cubes $I\in\mathcal G$ as $I_1,\cdots,I_{N^d}$ in an ascending $\rho$ order, that is so that $\rho(I_k)\ge\rho(I_{k+1})$. For every $I_k\in\mathcal G$ we find a dyadic cube $J_k$ so that:
\begin{enumerate}
\item $I_k\subset J_k$.
\item $2\rho(I_k)\le \ell(J_k)< 4\rho(I_k)$.
\end{enumerate}
We define the collection $\mathcal M$ to be the collection of maximal elements $\bset{J_k}$, that is
$$
J_k\in\mathcal M\iff I_k\not\subset \bunion i 1 {k-1}J_i,
$$
for if $I_k\subset J_i$ for some $i<k$, then since $\rho(I_i)\ge\rho(I_k)$, $J_k\subseteq J_i$. $\mathcal M$ forms a cover for $Q$ and it is uniquely defined.\\
Let $n_\ell$ denote the number of elements in $\mathcal M$ with edge length $2^\ell$. We will define the function $M$ to be a step function, with steps
$$
s_m:=\min\bset{\bb{\frac{\alpha N^d}{\underset{\ell\ge m}\sum 2^{\ell} n_\ell}}^{\frac1{d-1}},\frac {N}{6d}},
$$
for some numerical constant $\alpha$ which we shall choose later. It is not hard to see that this is a monotone non-decreasing sequence. We denote the index of the first element satisfying $s_m= \frac {N}{6d}$ by $\bar m+1$ and stop defining the sequence there. Let $m_0$ be a constant, which will depend on the dimension, and determined later by Claim \ref{clm:large_sqrs_measure}, and define the function
$$
M(k):=	\begin{cases}
		2^{m_0}&, k\le s_{m_0}\\
		2^m&, s_{m-1}< k\le s_m,\;  m\in\bset{m_0+1,\cdots,\bar m+1}
		\end{cases}.
$$
Though the definition of $M$ may seem forced and unnatural now, we will soon see it is in fact very natural. To show that the function $M$ satisfies property (\ref{eq:M}), we relay on two key claims. The first claim bounds the total number of elements in $\mathcal G$ with large $\rho$. We will show that elements in $\mathcal M$ with large edge length do not cover a big part of $Q$:
\begin{claim}\label{clm:large_sqrs_measure}
There exist a numerical constant $C_1$ and an index $m_0$, which depend on the dimension $d$, so that
$$
\underset{m\ge{m_0}}\sum 2^{m\cdot d} n_m\le C_1\#\mathcal E.
$$
\end{claim}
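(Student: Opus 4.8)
The strategy is to show that every ``large'' cube of $\mathcal M$ already contains, inside itself, a fixed proportion of its own volume in the complement of $K$; since the cubes of $\mathcal M$ are pairwise disjoint, one then just adds up. The preliminary step is to make sure that no cube of $\mathcal M$ is so large that it sticks out of $Q$. I would first record the elementary ``corner wedge'' bound $m_d\!\left(B(x,s)\cap Q\right)\ge 2^{-d}m_d\!\left(B(x,s)\right)$, valid for every $x\in Q$ and every $0<s\le 2N$, whence $m_d\!\left(K\cap B(x,s)\right)\ge m_d\!\left(B(x,s)\cap Q\right)-\#\mathcal E$. Using this with $s=N/8$ together with $\#\mathcal E\le c_0N^d$ shows, for a dimensional $\delta_0$ small enough and a dimensional $c_0$ small enough, that $r(x)\le N/8$ for every $x\in Q$; since $N\gg 1$ this gives $\rho(x)\le N/8$ on $Q$, and hence $\ell(J)<4\rho(I)\le N/2$ for every $J\in\mathcal M$, where $I\subset J$ is the basic cube to which $J$ is attached. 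Because $N=2^{N_0}$, the face-planes $\{x_i=\pm N/2\}$ of $Q$ lie on the dyadic grid of every order $\le N_0-1$, so $Q$ is a union of dyadic cubes of each such order; in particular any dyadic cube of order $\le N_0-2$ meeting $Q$ is contained in $Q$. As $\ell(J)\le N/4$ and $I\subset Q$, this forces $J\subset Q$ for every $J\in\mathcal M$, and the boundary of $Q$ plays no further role.

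The heart of the matter is the local estimate: for $\delta_0$ dimensional and small, and for $m_0$ chosen with $2^{m_0}>8\sqrt d$, every $J\in\mathcal M$ with $\ell(J)=2^m\ge 2^{m_0}$ satisfies $m_d(J\setminus K)\ge\beta_d\,2^{md}$ with $\beta_d=\beta_d(d)>0$. Indeed, with $I\subset J$ the attached basic cube, $\ell(J)<4\rho(I)$ gives $\rho(I)>2^{m-2}\ge 2^{m_0-2}>2\sqrt d$, so by definition of $\rho(I)=\sup_{x\in I}\rho(x)$ there is a point $x^*\in I\subset J$ with $\rho(x^*)>2^{m-2}$; as this exceeds $2\sqrt d$ we get $\rho(x^*)=r(x^*)$. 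Since $2^{m-2}<r(x^*)$, the definition of $r(x^*)$ yields $m_d\!\left(K\cap B(x^*,2^{m-3})\right)<\delta_0 m_d\!\left(B(0,1)\right)(2^{m-2})^d=\delta_0 2^d\,m_d\!\left(B(x^*,2^{m-3})\right)$, while the corner wedge applied to the cube $J$ itself — legitimate because $x^*\in J$ and $2^{m-3}\le\ell(J)$ — gives $m_d\!\left(B(x^*,2^{m-3})\cap J\right)\ge 2^{-d}m_d\!\left(B(x^*,2^{m-3})\right)$. Subtracting and choosing $\delta_0$ so small that $2^{-d}-\delta_0 2^d\ge 2^{-d-1}$ produces $m_d(J\setminus K)\ge m_d\!\left(\left(B(x^*,2^{m-3})\cap J\right)\setminus K\right)\ge 2^{-d-1}m_d\!\left(B(x^*,2^{m-3})\right)=\beta_d\,2^{md}$ with $\beta_d=2^{-4d-1}m_d(B(0,1))$. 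The key is that the density estimate is applied to $J$ itself, not to $Q$ nor to a dilate of $J$: this is exactly what sidesteps any ``bounded overlap of dilated cubes'' issue, which in fact genuinely fails for arbitrary families of disjoint dyadic cubes.

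To finish, set $\mathcal{M}_0:=\{J\in\mathcal M:\ell(J)\ge 2^{m_0}\}$. Since each such $J$ lies in $Q$ and $Q=\bigcup_{I'\in\mathcal G}I'$ up to a null set, we have $J\setminus K=J\cap\bigcup_{I'\in\mathcal E}I'$ up to a null set; and the cubes of $\mathcal M$, being maximal among a family of dyadic cubes, are pairwise disjoint. Hence
\[
\beta_d\sum_{m\ge m_0}2^{md}n_m=\beta_d\sum_{J\in\mathcal{M}_0}\ell(J)^{d}\ \le\ \sum_{J\in\mathcal{M}_0}m_d\!\left(J\cap\bigcup_{I'\in\mathcal E}I'\right)\ =\ m_d\!\left(\bigcup_{J\in\mathcal{M}_0}\left(J\cap\bigcup_{I'\in\mathcal E}I'\right)\right)\ \le\ \#\mathcal E ,
\]
which is the assertion with $C_1=\beta_d^{-1}$. (If $\#\mathcal E$ is so small that $\mathcal{M}_0=\varnothing$ the statement is vacuous, so one loses nothing by assuming otherwise.)

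The step I expect to demand the most care is not any individual estimate but the constant-bookkeeping around the reduction $J\subset Q$: one must fix $\delta_0$ small enough (dimensionally) to serve simultaneously in the local density estimate and in the bound $r\le N/8$ on $Q$ (and small enough for the uses of $\delta_0$ elsewhere in the proof of Lemma~\ref{lem:Jones-Makarov}, e.g.\ in Observation~\ref{obs:Z_u}), and then fix $c_0$ small enough that $\ell(J)\le N/4$ for every $J\in\mathcal M$, which is precisely what lets the dyadic alignment of $\partial Q$ trap $\mathcal M$ inside $Q$. These are finitely many one-sided constraints on dimensional quantities and can therefore be met, but keeping them mutually consistent is the only delicate aspect; the geometric content, once $J\subset Q$ is in hand, is the two-line computation of the previous paragraph.
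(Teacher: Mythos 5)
Your proof is correct, and it takes a genuinely different route from the paper's. The paper's argument passes through the Hardy--Littlewood maximal function: it observes that for every $x$ with $\rho(x)>2\sqrt d$ the ball $B(x,\rho(x)/4)$ lies in the level set $\{M_{\mathbf 1_{Q\setminus K}}\ge 1/2\}$, bounds the measure of that level set by $\#\mathcal E$ via the weak-type $(1,1)$ maximal inequality, and then shows that each large $J\in\mathcal M$ contributes a small concentric sub-cube sitting inside the union of those balls, finishing with disjointness of the $J$'s. You skip the maximal function entirely: for each large $J\in\mathcal M$ you extract a point $x^*\in I\subset J$ with $\rho(x^*)$ comparable to $\ell(J)$, use the definition of $r(x^*)$ to see that $K$ has small density in $B(x^*,\ell(J)/8)$, and the ``corner wedge'' bound to see that a fixed fraction of that ball sits inside $J$; subtracting gives $m_d(J\setminus K)\gtrsim_d m_d(J)$, and then disjointness of $\mathcal M$ plus $m_d(Q\setminus K)=\#\mathcal E$ closes the argument in one line. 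This is more elementary (no covering theorem or maximal inequality is invoked) and, since it works entirely inside the cube $J$ rather than with balls or shrunk concentric cubes, it sidesteps two small issues in the paper's write-up: (i) the assertion $M_{\mathbf 1_{Q\setminus K}}(x)\ge 1/2$ tacitly uses $B(x,\rho(x)/4)\subset Q$, and (ii) the inclusion $\frac1{16\sqrt d}J\subset B(x_J,\rho(x_J)/4)$ is stated for the cube \emph{concentric with} $J$, whereas $x_J$ need not be near the center of $J$, so the intended small cube should really be centered at $x_J$. The one thing your version must pay for is the preliminary reduction to $J\subset Q$; you handle this correctly by observing that the assumption $\#\mathcal E\le c_0N^d$ forces $\rho\lesssim N$ on $Q$ and that the dyadic alignment of $\partial Q$ (using $N=2^{N_0}$) then traps every $J\in\mathcal M$ inside $Q$. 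The bookkeeping of constants is slightly off in places ($N/8$ versus $N/4$ versus $N/2$), but as you note these are finitely many one-sided dimensional constraints and easily reconciled. Your aside about ``bounded overlap of dilated cubes'' is a bit of a red herring --- the paper shrinks rather than dilates, so there is no overlap issue there --- but it does not affect your argument.
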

\begin{proof}
Recall that the maximal function is defined by
$$
M_f(x):=\underset{B \text{ some ball}\atop\text{with } x\in B}\sup\frac1{m_d(B)}\underset B\int f(y)dm_d(y).
$$
Let $x$ be so that $\rho(x)> 2\sqrt d$, then by the way $\rho$ is defined
$$
\frac{m_d\bb{K\cap B\bb{x,\frac{\rho(x)}4}}}{m_d\bb{ B\bb{x,\frac{\rho(x)}4}}}<\delta_0,
$$
and, by definition, the maximal function $M_{\indic{Q\setminus K}}$ satisfies
\begin{eqnarray*}
M_{\indic{Q\setminus K}}(x)=\underset{B \text{ some ball}\atop\text{with } x\in B}\sup\frac1{m_d(B)}\underset B\int \indic{Q\setminus K}dm_d&=&\underset{B \text{ some ball}\atop\text{with } x\in B}\sup\bb{1-\frac{m_d(B\cap K)}{m_d(B)}}\\
&\ge&1-\frac{m_d\bb{K\cap B\bb{x,\frac{\rho(x)}4}}}{m_d\bb{ B\bb{x,\frac{\rho(x)}4}}}\ge 1-\delta_0\ge\frac12,
\end{eqnarray*}
for $\delta_0$ appropriately chosen. We note that the same holds for every $y\in B\bb{x,\frac{\rho(x)}4}$. We conclude that $B\bb{x,\frac{\rho(x)}4}\subset\bset{M_{\indic{Q\setminus K}}\ge\frac12}$. Using the maximal function theorem,
\begin{equation*}
\tag{$\dagger$}\label{eq:max-func}
m_d\bb{\underset{\bset{x ,\;\rho(x)>2\sqrt d}}\bigcup B\bb{x,\frac{\rho(x)}4}}\le m_d\bb{\bset{M_{\indic{Q\setminus K}}\ge\frac12}}\lesssim m_d(Q\setminus K)=\#\mathcal E,
\end{equation*}
where the last equality is by the definition of the set $K$.

Let $J\in\mathcal M$ be so that $\ell(J)>8\sqrt d$. Then there exists $x_J\in J$ with $\rho(x_J)\ge\frac{\ell(J)}4>2\sqrt d$, implying that $J\subset B(x_J,4\sqrt d\cdot\rho(x_J))$. We conclude that
$$
\frac{1}{16\sqrt d}J\subset B\bb{x_J,\frac{ \rho(x_J)}4},
$$
where $\frac{1}{16\sqrt d}J$ is the cube concentric with $J$, having edge-length $\frac{1}{16\sqrt d}\cdot\ell(J)$.

We choose $m_0$ to be the first integer satisfying that $2^{m_0}>8\sqrt d$. By using the inequality above, since the elements in $\mathcal M$ are disjoint,
\begin{eqnarray*}
\sumit m {m_0}\infty 2^{d\cdot m} n_m&=&\sumit m {m_0}\infty \bb{2^m}^d n_m=\underset{J\in\mathcal M\atop{\ell(J)\ge 2^{m_0}}}\sum m_d(J)=16^d\cdot d^{\frac d2}\underset{J\in\mathcal M\atop{\ell(J)\ge 2^{m_0}}}\sum m_d\bb{\frac{1}{16\sqrt d} J}\\
&=&16^d\cdot d^{\frac d2}m_d\bb{\underset{J\in\mathcal M\atop{\ell(J)\ge 2^{m_0}}}\bigcup  \frac{1}{16\sqrt d} J}\lesssim m_d\bb{\underset{J\in\mathcal M\atop{\ell(J)\ge 2^{m_0}}}\bigcup B\bb{x_J,\frac{\rho(x_J)}4}}\\
&\le&  m_d\bb{\underset{\bset{x ,\;\rho(x)>2\sqrt d}}\bigcup B\bb{x,\frac{\rho(x)}4}}\lesssim\#\mathcal E,
\end{eqnarray*}
by (\ref{eq:max-func}), concluding the proof.
\end{proof}
The second claim gives a softer condition to guarantee that property (\ref{eq:M}) holds.
\begin{claim}\label{clm:prop2}
There exists a numerical constant $C_2$ so that for every $c_0$ small enough if $\alpha=\frac1{12C_2}-c_0C_1>0$, then for all $m\ge m_0$, for every $k\le\bb{\frac{\alpha\cdot N^d}{\underset{\ell\ge m}\sum 2^{\ell} n_\ell}}^{\frac1{d-1}}$ at least $\frac{11}{12}$ of the basic cubes $I\in\mathcal G$ satisfy that $\left.\rho\right|_{A(I,k)\cap Q}\le 2^m$.
\end{claim}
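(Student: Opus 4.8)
The plan is to bound the number of ``bad'' basic cubes, namely those $I\in\mathcal G$ for which $\rho$ exceeds $2^m$ somewhere on $A(I,k)\cap Q$, and to show that this number is at most $\frac1{12}N^d$ whenever $k$ lies in the stated range; the remaining $\frac{11}{12}N^d$ cubes then satisfy $\rho|_{A(I,k)\cap Q}\le 2^m$, which is the assertion.

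First I would reduce the problem to the cover $\mathcal M$. If $x\in Q$ satisfies $\rho(x)>2^m$, the (unique) basic cube $I_0\in\mathcal G$ containing it has $\rho(I_0)\ge\rho(x)>2^m$, so the dyadic cube $J_0$ assigned to $I_0$ in the construction of $\mathcal M$ has $\ell(J_0)\ge 2\rho(I_0)>2^{m+1}$, hence $\ell(J_0)\ge 2^{m+2}$; and since dyadic cubes are nested or disjoint, the cubes $I_j$ were listed with $\rho(I_j)$ non-increasing, and $\mathcal M$ consists of the \emph{maximal} assigned dyadic cubes, the element $J\in\mathcal M$ with $I_0\subset J$ satisfies $J_0\subseteq J$, so $\ell(J)\ge 2^{m+2}$ as well. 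Therefore $\{x\in Q:\rho(x)>2^m\}$ is contained in the union of those $J\in\mathcal M$ with $\ell(J)\ge 2^{m+2}$, and a cube $I$ can be bad only if $A(I,k)\cap J\ne\emptyset$ for some such $J$.

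Next I would prove an elementary lattice bound: for a dyadic cube $J$ of edge length $s$,
\[
\#\{I\in\mathcal G:A(I,k)\cap J\ne\emptyset\}\le C_d\,\bigl(s^d+s\,k^{d-1}\bigr),
\]
because the shell $A(I,k)$ meets $J$ only when the $\ell^\infty$-distance from $I$ to $J$ lies in $(k-s,k]$, confining $I$ to an annular region of thickness $\lesssim_d s$ around a cube of side $\lesssim_d s+k$ (which contains $\lesssim_d s^d$ lattice cubes when $k\le s$ and $\lesssim_d s\,k^{d-1}$ when $k>s$). Summing over $J\in\mathcal M$ with $\ell(J)\ge 2^{m+2}$, and writing $n_\ell$ for the number of such elements of edge length $2^\ell$,
\[
\#\{\text{bad }I\}\le C_d\Bigl(\sum_{\ell\ge m+2}2^{\ell d}n_\ell+k^{d-1}\sum_{\ell\ge m+2}2^{\ell}n_\ell\Bigr).
\]
Then I would bound the two sums. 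Since $m\ge m_0$, Claim~\ref{clm:large_sqrs_measure} gives $\sum_{\ell\ge m+2}2^{\ell d}n_\ell\le\sum_{\ell\ge m_0}2^{\ell d}n_\ell\le C_1\#\mathcal E\le C_1c_0N^d$; and raising the hypothesis $k\le\bigl(\alpha N^d/\sum_{\ell\ge m}2^{\ell}n_\ell\bigr)^{1/(d-1)}$ to the power $d-1$ yields $k^{d-1}\sum_{\ell\ge m+2}2^{\ell}n_\ell\le\alpha N^d$. Hence $\#\{\text{bad }I\}\le C_d(C_1c_0+\alpha)N^d$, and taking $C_2:=C_d$ and $\alpha:=\frac1{12C_2}-c_0C_1$ (positive once $c_0$ is small) makes the right-hand side exactly $\frac1{12}N^d$.

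The hardest parts will be the covering step — verifying carefully that the element of $\mathcal M$ containing a basic cube with $\rho(I_0)>2^m$ really has edge length at least $2^{m+2}$, which rests on the maximality in the definition of $\mathcal M$ together with the ordering of the $I_j$ by $\rho$ — and, in the lattice count, extracting the sharp shape $s^d+s\,k^{d-1}$ rather than the crude $(s+k)^d$, since it is precisely the factor $s\,k^{d-1}$ (combined with Claim~\ref{clm:large_sqrs_measure}) that produces the exponent $\frac1{d-1}$ on $k$ in the statement.
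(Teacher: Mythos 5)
Your argument is correct and follows essentially the same strategy as the paper's: you cover the set $\{\rho>2^m\}$ by elements of $\mathcal M$ with large edge length (you use $\ell(J)\ge 2^{m+2}$ where the paper phrases it contrapositively with $\ell(J)>2^m$, but this is an immaterial difference), bound $\#\{I:A(I,k)\cap J\ne\emptyset\}\lesssim_d\ell(J)^d+\ell(J)k^{d-1}$ by a lattice-count that the paper carries out with a slightly different vertex-encoding bookkeeping, and then sum using Claim~\ref{clm:large_sqrs_measure} and the hypothesis on $k$ to get $\#\{\text{bad }I\}\le C_2(C_1c_0+\alpha)N^d=\frac1{12}N^d$, just as in the paper.
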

It becomes clear now why the sequence $\bset{s_m}$ was defined in that particular way: Fix $k$ and let $m$ be so that $s_{m-1}<k\le s_m$, then by definition $M(k)=2^m$. Next,
$$
k\in K_I\iff \rho|_{A(I,k)\cap Q}\le M(k)=2^m.
$$
On the other hand,
$$
k\le s_{m}=\bb{\frac{\alpha\cdot N^d}{\underset{\ell\ge m}\sum 2^{\ell} n_\ell}}^{\frac1{d-1}}.
$$
Following Claim \ref{clm:prop2}, at least $\frac{11}{12}$ of the basic cubes $I\in\mathcal G$ satisfy that $\left.\rho\right|_{A(I,k)\cap Q}\le 2^m$, or, in other words, $\#\bset{I,\; k\in K_I}\ge\frac{11}{12}N^d$, and so the function $M$ satisfies property (\ref{eq:M}).
\begin{proof}[Proof of Claim \ref{clm:prop2}]
For every $J\in\mathcal M$, for every $x\in J$, $\rho(x)<\frac{\ell(J)}2$, by the way the collection $\mathcal M$ was defined. In particular, if $\ell(J)<2^m$, then for every $x\in J$, $\rho(x)<2^m$ as well.
We conclude that if $A(I,k)$ only intersects $J\in\mathcal M$ with $\ell(J)<2^m$, then every $x\in A(I,k)$ must satisfy $\rho(x)<2^m$, as $\mathcal M$ forms a cover for $Q$. Overall,
$$
\bset{I, \exists x\in A(I,k), \rho(x)>2^m}\subseteq\underset{J\in\mathcal M\atop \ell(J)>2^m}\bigcup\bset{I, A(I,k)\cap J\neq \emptyset}:=\underset{J\in\mathcal M\atop \ell(J)>2^m}\bigcup B(J,k).
$$
We will bound the number of elements in $B(J,k)$, i.e given $J\in\mathcal M$ and $k\in\bset{1,\cdots,\frac N{6d}}$, how many elements $I\in \mathcal G$ satisfy that $J\cap A(I,k)\neq\emptyset$?

To bound this quantity we will look at two cases. The first and simpler case is when $\ell(J)>2k+1$. Since $\ell(J)>2k+1$, the cube $J$ must contain at least one outer vertex of the set $A(I,k)$. We begin by choosing some order on the outer vertices of $A(I,k)$. Then we can associate each basic cube $I\in B(J,k)$ with the location of the first outer vertex of $A(I,k)$ lying inside $J$ (see Figure \ref{fig:sub_big} bellow). Because $J$ is a union of basic cubes and so is $A(I,k)$, the number of sites in $J$, in which such a vertex could be found, is equal to the number of basic cubes in $J$. We can therefore bound from above the number of elements in $B(J,k)$, in this case, by the number of possible vertices in $A(I,k)$ times the number of basic cubes in $J$, that is by $2^dm_d(J)=2^d\cdot\ell(J)^d$.

The second case is when $\ell(J)\le 2k+1$. In this case, if $J\cap A(I,k)\neq\emptyset$, then the boundary of $J$ must intersect $A(I,k)$. In particular, there are two adjacent vertices of $J$ one inside (or on) $A(I,k)$ and the other outside (or on) $A(I,k)$. We can identify every basic cube $I$ in $B(J,k)$ by indicating the basic cube on $A([0,1]^d,k)$ where the intersection of $A(I,k)$ and $J$ occurs, and the basic cube on the one dimensional facet connecting the two adjacent vertices of $J$ mentioned earlier  (see Figure \ref{fig:sub_small} bellow). If more than two vertices satisfy this, we choose an order on the collection of one dimensional facets and use the first one intersecting $A(I,k)$. We conclude that the number of elements in $B(J,k)$, in this case, is bounded by the number of basic cubes on all the one dimensional facets of $J$ times the number of basic cubes in $A([0,1]^d,k)$, which is bounded by
$$
2^d\cdot\ell(J)\cdot 6d\cdot(2k+1)^{d-1}\lesssim \ell(J)\cdot k^{d-1}.
$$
\begin{figure}[!ht]
	 \center
	    \begin{subfigure}[b]{0.49\textwidth}
		{
		  \centering
	      	\includegraphics[width=0.8\linewidth]{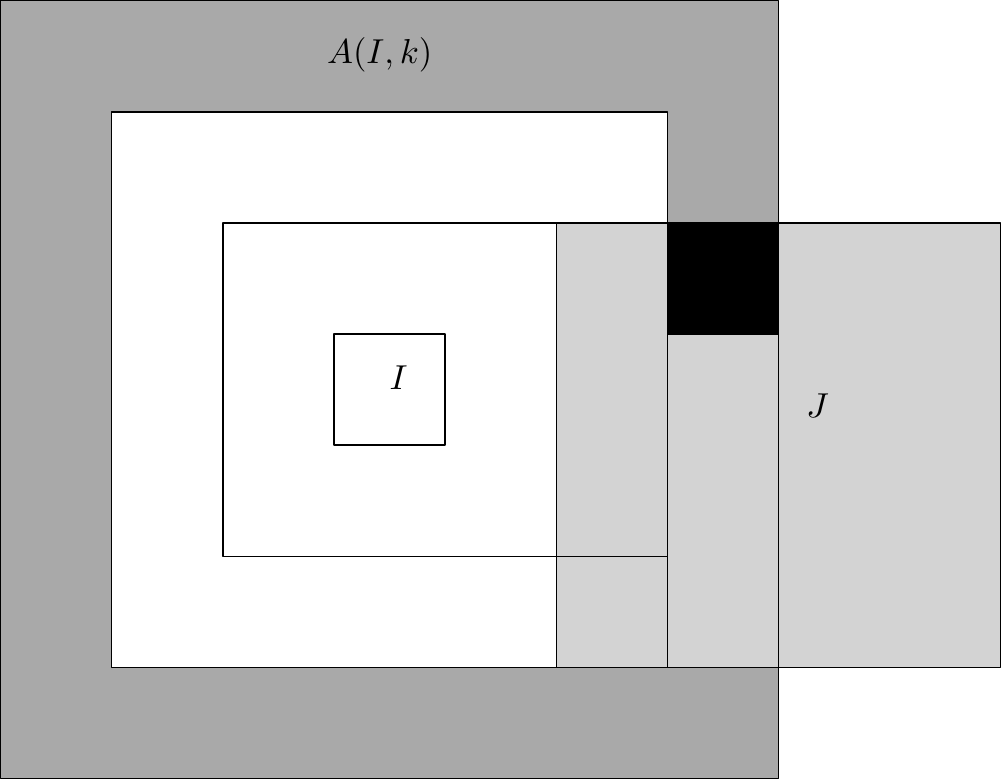}
		\caption{The case $\ell(J)\le2k+1$.}
		\label{fig:sub_small}
 		   }
	    \end{subfigure}
	    \begin{subfigure}[b]{0.49\textwidth}
		    {
		      \centering
	        	\includegraphics[width=0.8\linewidth]{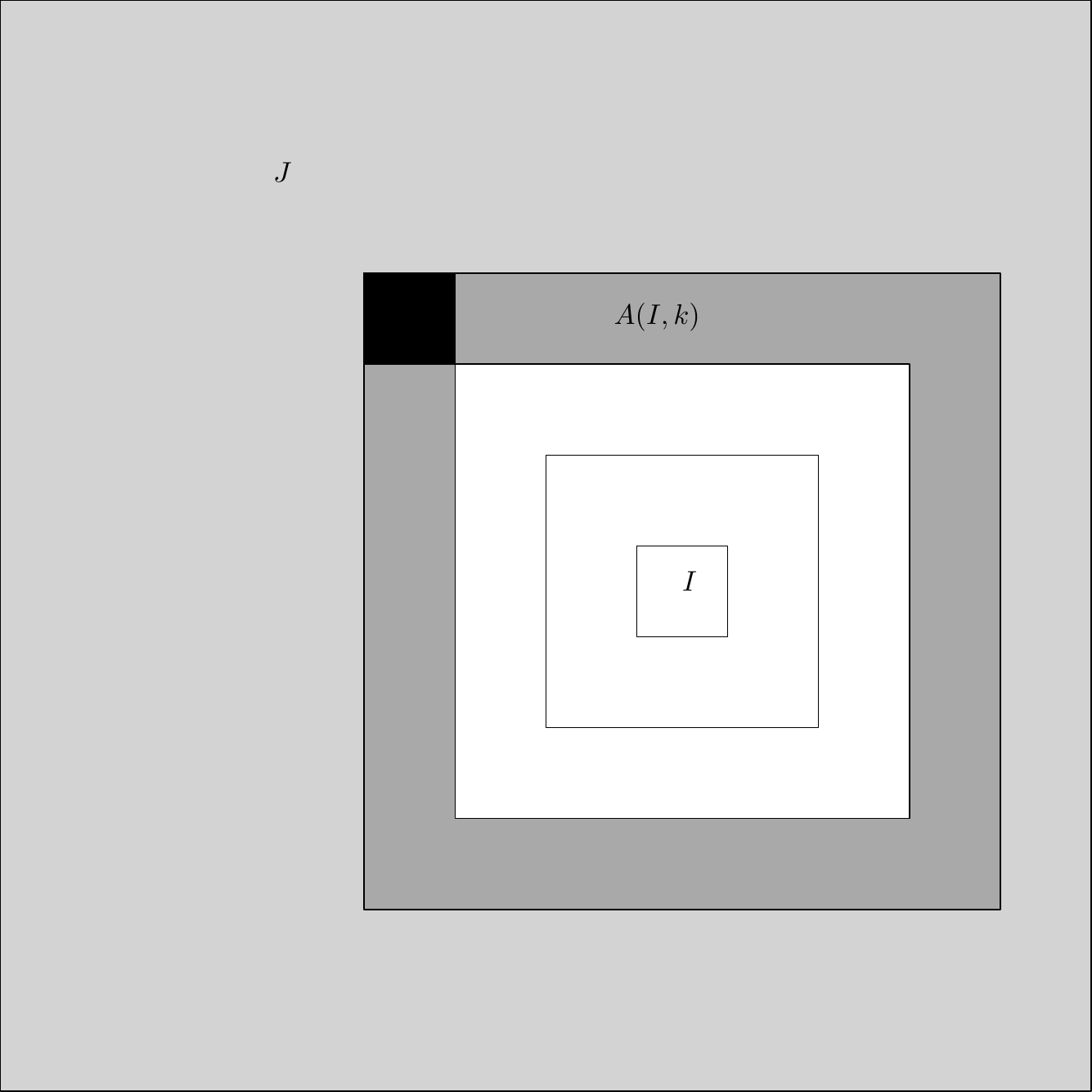}
		\caption{The case $\ell(J)>2k+1$. }
		 \label{fig:sub_big}
		    }
		    \end{subfigure}
	    \caption{This figure describes what happens in dimension $d=2$. In both figures, the black cube is the basic cube we choose to indicate $I\in B(J,k)$.}
	    \label{fig:clm2}
\end{figure}

Over all we conclude that
\begin{eqnarray*}
&&\#\bset{I,\;\exists x\in J\cap A(I,k) \text{ with }\rho(x)>2^m}\le  \underset{J\in\mathcal M\atop \ell(J)>2^m}\sum \#B(J,k)\lesssim\underset{J\in\mathcal M\atop \ell(J)>2^m}\sum \bb{\ell(J)^d+k^{d-1}\ell(J)}\\
&&\lesssim \underset{J\in\mathcal M\atop\ell(J)>2^m}\sum m_d(J)+k^{d-1}\underset{J\in\mathcal M\atop\ell(J)>2^m}\sum \ell(J)=\underset{\ell>m}\sum 2^{\ell\cdot d}n_\ell+k^{d-1}\underset{\ell>m}\sum 2^{\ell}n_\ell.
\end{eqnarray*}
Combining this with Claim \ref{clm:large_sqrs_measure}, we see that if $k^{d-1}\le\frac{\alpha N^d}{\underset{\ell\ge m}\sum 2^{\ell} n_\ell}$ and $\#\mathcal E\le c_0N^d$, then there exists a constant $C_2$, which depends on the dimension alone, so that
\begin{eqnarray*}
&&\#\bset{I,\;\exists x\in Q\cap A(I,k) \text{ with }\rho(x)>2^m}\le \cdots\le  C_2\underset{\ell> m}\sum 2^{\ell\cdot d}n_\ell+C_2k^{d-1}\underset{\ell\ge m}\sum 2^{\ell}n_\ell \\
&&\le C_2\cdot C_1\#\mathcal E+C_2\alpha\cdot N^d\le N^d\bb{C_2C_1c_0+C_2 \alpha}=\frac {N^d}{12},
\end{eqnarray*}
by the way we defined $\alpha$. We therefore get that
\begin{eqnarray*}
\#\bset{I,\;\forall x\in Q\cap A(I,k) \text{ with }\rho(x)\le2^m}=N^d-\#\bset{I,\;\exists x\in Q\cap A(I,k) \text{ with }\rho(x)>2^m}\ge N^d-\frac {N^d}{12}=\frac{11}{12}N^d,
\end{eqnarray*}
concluding the proof.
\end{proof}
\subsubsection{Step 3: Bounding $\sumit k 1 {\frac {N}{6d}}\frac1{M(k)}$ from bellow:}
If $s_{m_0}=\frac N{6d}$, then by the way the function $M$ is defined, for all $k$ we have $M(k)=2^{m_0}$, implying that for some constant which depends on the dimension alone,
$$
\sumit k 1 {\frac {N}{6d}} \frac1{M(k)}=\sumit k 1 {\frac {N}{6d}} \frac1{2^{m_0}}= c_d\cdot N,
$$
which yields a smaller bound (up to multiplication by a constant) than the one indicated in the lemma. Otherwise, assume that $s_{m_0}<\frac N{6d}$. Since the function $M$ was defined as a step function,
\begin{eqnarray*}
\sumit k 1 {\frac {N}{6d}} \frac1{M(k)}&=&\frac{s_{m_0}}{2^{m_0}}+\sumit m {m_0+1} {\bar m+1}2^{-m}\bb{s_m-s_{m-1}}=\frac{s_{m_0}}{2^{m_0}}+\sumit m {m_0+1} {\bar m+1}2^{-m}s_m-\sumit m {m_0+1} {\bar m+1}2^{-m}s_{m-1}\\
&=&\sumit m {m_0} {\bar m+1}2^{-m}s_m-\frac12\sumit m {m_0} {\bar m}2^{-m}s_m=\frac12\sumit m {m_0} {\bar m}2^{-m}s_m+2^{-\bar m-1}s_{\bar m+1}\sim N\cdot 2^{-\bar m}+\sumit m {m_0} {\bar m}\frac{s_m}{2^m}.
\end{eqnarray*}
To bound $\sumit m {m_0} {\bar m}\frac{s_m}{2^m}$ from bellow, we use Jensen's inequality with the convex function $g(t)=t^{-\frac1{d-1}}$:
\begin{eqnarray*}
\sumit m {m_0}{\bar m}\frac{s_m}{2^m}&=&\sumit m {m_0}{\bar m}g\bb{\frac{2^{m(d-1)}}{s_m^{d-1}}}=\bb{\bar m-m_0+1}\frac{\sumit m {m_0}{\bar m}g\bb{\frac{2^{m(d-1)}}{s_m^{d-1}}}}{\bar m-m_0+1}\\
&\ge&\bb{\bar m-m_0+1}g\bb{\frac{\sumit m {m_0}{\bar m}\frac{2^{m(d-1)}}{s_m^{d-1}}}{\bar m-m_0+1}}= \frac{\bb{\bar m-m_0+1}^{1+\frac1{d-1}}}{\bb{\sumit m {m_0} {\bar m}\frac{2^{m(d-1)}}{s_m^{d-1}}}^{\frac1{d-1}}}.
\end{eqnarray*}
To bound $\sumit m {m_0} {\bar m}\frac{2^{m(d-1)}}{s_m^{d-1}}$ from above, we note that by definition 
$$
s_m=\bb{\frac{\alpha\cdot N^d}{\underset{\ell\ge m}\sum 2^{\ell} n_\ell}}^{\frac1{d-1}}.
$$
This implies that
\begin{eqnarray*}
\sumit m{m_0}{\bar m}\frac{2^{m(d-1)}}{s_m^{d-1}}=\frac1{\alpha N^d}\sumit m{m_0}{\bar m} 2^{m(d-1)}\underset{\ell\ge m}\sum 2^{\ell} n_\ell=\frac1{\alpha N^d}\underset{\ell\ge m_0}\sum 2^{\ell} n_\ell\sumit m{m_0}\ell 2^{m(d-1)}\sim \frac1{\alpha N^d}\underset{\ell\ge m_0}\sum 2^{d\cdot \ell} n_\ell\sim\frac{\#\mathcal E}{N^d},
\end{eqnarray*}
following Claim \ref{clm:large_sqrs_measure}. Then
\begin{eqnarray*}
\sumit m {m_0}{\bar m}\frac{s_m}{2^m}&\ge& \frac{\bb{\bar m-m_0+1}^{1+\frac1{d-1}}}{\bb{\sumit m {m_0} {\bar m}\frac{2^{m(d-1)}}{s_m^{d-1}}}^{\frac1{d-1}}}\gtrsim \bar m^{1+\frac1{d-1}}\cdot\bb{\frac{N^d}{\#\mathcal E}}^{\frac1{d-1}},
\end{eqnarray*}
implying that
\begin{eqnarray*}
\sumit k 1 {\frac {N}{6d}} \frac1{M(k)}\sim2^{-\bar m}\cdot N+\sumit m {m_0}{\bar m}\frac{s_m}{2^m}\gtrsim 2^{-\bar m}\cdot N+\bar m^{1+\frac1{d-1}}\cdot\bb{\frac{N^d}{\#\mathcal E}}^{\frac1{d-1}}.
\end{eqnarray*}
We would like our lower bound to hold for every $\bar m$. To find the optimal inequality, we define the function
$$
\varphi(x):=2^{-x}+x^{1+\frac1{d-1}}\cdot\bb{\frac{N}{\#\mathcal E}}^{\frac1{d-1}}.
$$
As we know nothing about $\bar m$, we will look for the absolute minimum of $\varphi$ in $\left[1,\frac N{6d}\right]$ and use whatever inequality this minimum satisfies:
$$
\varphi'(x)=-2^{-x}\log(2)+\bb{1+\frac1{d-1}}\bb{\frac{N}{\#\mathcal E}}^{\frac1{d-1}}\cdot x^{\frac1{d-1}}=0\iff x^{\frac1{d-1}}2^x\sim\bb{\frac{\#\mathcal E}{N}}^{\frac1{d-1}}
$$
and the later is a minimum point since $\varphi''\ge 0$. The lower bound this minimum produces is
$$
\exp\bb{-c_d\cdot\frac{N}{1+\bb{\frac{\#\mathcal E}{N}}^{\frac1{d-1}}}\log^{\frac d{d-1}}\bb{2+\frac{\#\mathcal E}{N}}},
$$
concluding the proof of the Main Lemma.
 \end{proof}

\section{Upper bound- example}\label{sec:upper}
\subsection{Prologue}
If $f(t)\lesssim t$, then the upper bound we are looking to prove is exponential. Let
$$
W(x_1,\cdots,x_d):=\cos(2\pi x_1)\prodit j 2 d\cosh\bb{\frac{2\pi x_j}{\sqrt{d-1}}}\indic{\bset{\abs{x_1}\le\frac14}}(x).
$$
This function is subharmonic, non-negative and supported on the set $\bset{\abs {x_1}<\frac14}$. By taking the maximum over translations and rotations of this function, we create a subharmonic function $u$ so that for every basic cube, $I$, we have $m_d(Z_u\cap I)\ge\frac12$, while $M_u(I)\ge1$, i.e $u$ oscillates in every basic cube. On the other hand, the growth of the function $W$ is exponential, implying that the growth of the function $u$ is exponential as well, concluding the proof of the case $f(t)\lesssim t$. From now on we will assume that $f(t)\gg t$.

Though the construction, which we will eventually present in this section, will be applicable for all the dimensions, there is an essential difference between the case of the plane and higher dimensions.

We would have loved to use a construction similar to the one presented in \cite{Us2017} of a `self similar' subharmonic function. Two issues arise when one tries this approach. The first issue lies in the very essence of `self similarity'. The whole idea of `self similarity' means that for every compact set $K$ and for every cube $[-\ell,\ell]^d$, there are $\sim\ell^d$ disjoint copies of the set $K$ where the function $u$ is defined in the same way, i.e there is some uniform $\delta>0$ and a set $\bset{\omega_j}_{j=1}^{\delta\cdot\ell^d}\subset [-\ell,\ell]^d$ so that $\omega_j+K\cap\omega_m+K=\emptyset$ whenever $j\neq m$, while for every $z\in K$ and $j$ we have $u(z)=u(z+\omega_j)$. This implies that if there is one rogue basic cube, then for every $\ell$ large enough there are $\sim \ell^d$ rogue basic cubes in $[-\ell,\ell]^d$. For every function $f$, satisfying $f(t)\ll t^d$, we accumulate too many rogue basic cubes using this method, and even for $f(t)\sim t^d$ we do not get the optimal bound on the growth using this method.

The second issue arises only in dimensions higher than $3$. The construction in \cite{Us2017} uses functions similar to the function $W$, presented in the proof of the case $f(t)\lesssim t$. The support of these functions is of the form hyperspace $\times$ relatively small interval. We will refer to these sets as `walls'. These `walls' are used to weld together copies of the function $u$ on some compact sets, to get the `self similarity' property. The number of basic cubes lying in the intersection of such a `wall' and a cube of edge length $N$ is $N^{d-1}\times$ (the length of the relatively small interval). As we are considering the case $f(t)\gg t$, this does not form a problem for the plane. In higher dimensions, this becomes a problem whenever $f(t)\ll t^{d-1}$, and in fact for any function $f(t)\ll t^d$ this method will not produce the optimal bound on the growth.

We will define a subharmonic function whose support looks like an upside down tree. Instead of using these `wall' sets, we will use tube-like sets. The entire support of the function $u$ will be a union of tubes of different diameters. This solves the second issue we discussed. To address the first issue, instead of duplicating the same function over and over, we will create similar but different functions, which have less and less rogue basic cubes. We formally describe the support of this function bellow. The idea of trees (explained in Subsection \ref{sec:Tree} bellow) was inspired by numerous examples in Jones and Makarov's paper, \cite[Section 6]{JoneMakarov1995}.

\subsection{The set} \label{sec:Tree}
We begin our discussion with a description of the set, on which the subharmonic function we create, is supported (in fact, it will be supported on an extremely similar set, but not exactly this set). This set is a union of $2^d$ disjoint sets, each of them will be kind of an upside down tree described from its tree-top to the trunk.

Let $\bset{v_j}$ denote the vertices of the cube $[-1,1]^d$. For every scalar $\alpha$ we write $\alpha\cdot v_j$ to describe the rescaling of the point $v_j$ by $\alpha$. For example, let $v_0=(1,1,\cdots,1)$, then $\alpha \cdot v_0=(\alpha,\alpha,\cdots,\alpha)$. Let $R_j$ denote the linear map taking  the cube $v_0+[-1,1]^d$ to $v_j+[-1,1]^d$ with $R_j(0)=0$ and $R_j(v_0)=v_j$. The set we describe here will be a union of disjoint copies of one set $T\subset (0,\infty)^d$, that is $\widetilde T:=\bunion j 1 {2^d-1}R_j(T)$. It is therefore enough to describe the set $T$, which we shall do using a recursive description.

\paragraph{Basic terminology and the first steps} A {\it tube of diameter $\delta$} is any translation or rotation of the set
$$
\bset{x=(x_1,\cdots,x_d),\; \forall j\ge 2,\;\abs{x_j}<\frac\delta2,\text{ and } x_1\in[a,b]}
$$

\begin{figure}[!h]
\begin{minipage}[c]{0.58\columnwidth}
where $a<b$ are both real numbers. Given a cube $Q$ we will denote by $\ell(Q)$ the edge length of $Q$.\\
For every basic cube lying inside $[0,2)^d$, we connect the center of the basic cube, $v^*$, with $v_0$ by a tube of diameter $\eps_1$. We place the tube so that the centerline of the tube is the line connecting $v_0$ and $v^*$. We will call this structure a {\it basic subtree of leaves diameter $\eps_1$} and denote it $T_1$. For illustration in dimension $d=3$ see Figure \ref{fig:stepone} to the right.
\end{minipage}%
\hfill
\begin{minipage}[c]{0.4\columnwidth}
	\centering
	\includegraphics[width=0.65\textwidth]{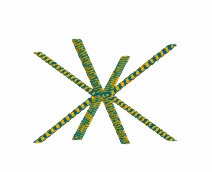}
	\caption{A basic subtree in dimension $d=3$, composed of $8=2^d$ tubes connecting the basic cubes surrounding $v_0$ with $v_0$.}
	\label{fig:stepone}
\end{minipage}
\end{figure}

On step 2, we note that $[0,4)^d$ is composed of $2^d$ disjoint dyadic cubes of order 1, i.e of edge length $2$. In one of them, $[0,2)^d$, we already defined a basic subtree of leaves diameter $\eps_1$. We shall call it {\it the inner subtree of rank 1}. In each of the other $2^d-1$ cubes we create a basic subtree of leaves diameter $\eps_2$ in a similar manner to step 1, for some $\eps_2$.  We will call these {\it the outer subtrees of rank 1}. We then connect the center of each dyadic cube of order 1 to $2v_0$ by a tube of relative diameter $\eps_2$, i.e the diameter of the tube is $4\eps_2$. These tubes will be called  brunches. More precisely, a {\it brunch} will be any tube of diameter bigger than $2\eps_1$. We call this structure a {\it subtree of rank 2} and denote it $T_2$. For illustration examples in dimensions 2 and 3, see Figures \ref{fig:steps} and \ref{fig:steps3d} bellow.

\begin{figure}[!h]
\centering
\includegraphics[scale=0.6]{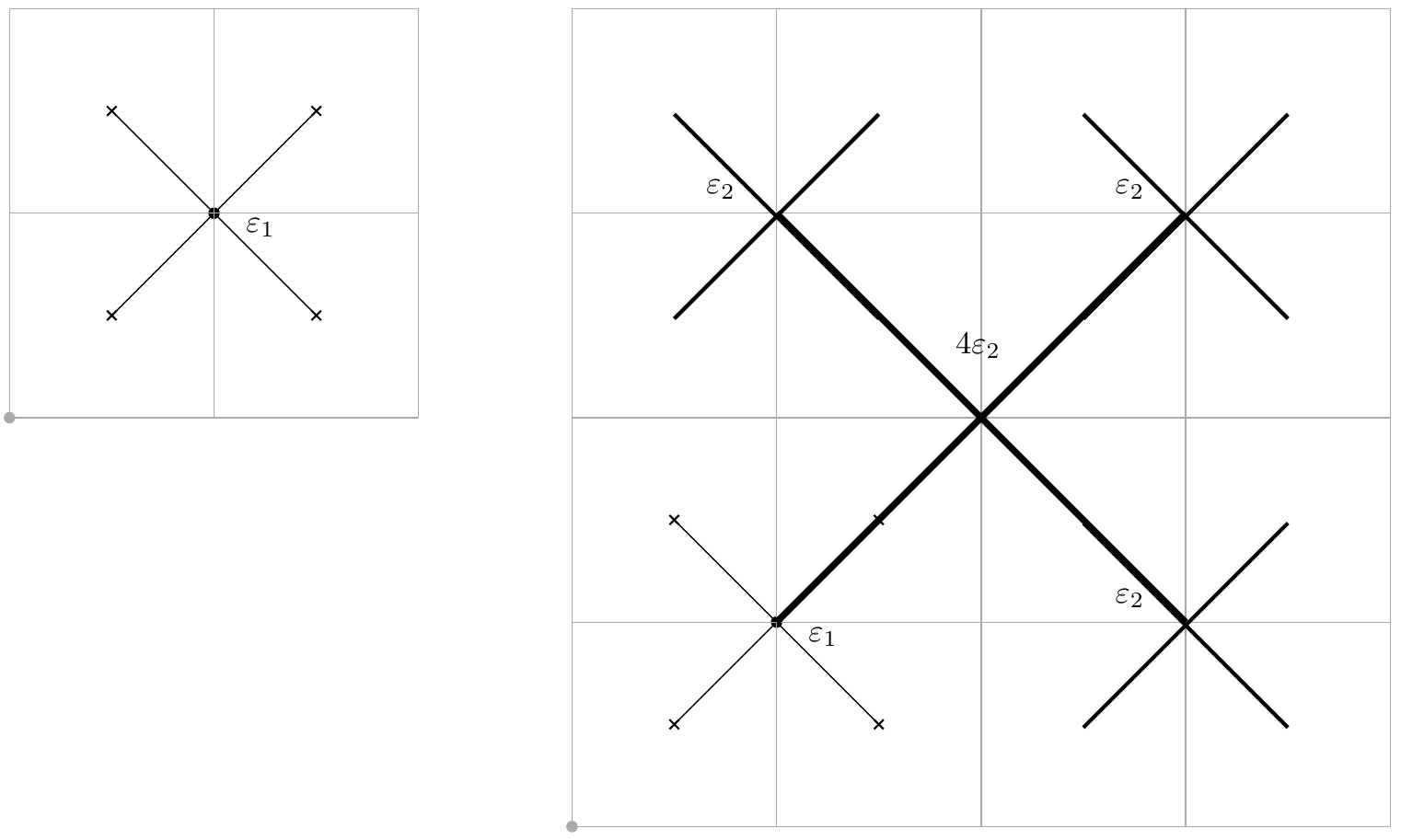}
  \caption{This figure depicts the first two steps in dimension $d=2$. The figure to the left depicts the first step and the figure to the right depicts the second step. The grey lines indicate the boundary of the basic cubes. The numbers next to the tubes indicate their diameter.}
  \label{fig:steps}
\end{figure}

\begin{figure}[!ht]
\noindent
\begin{minipage}[b]{0.397\columnwidth}
	 \centering
	    \begin{subfigure}[b]{\columnwidth}
		    {
	        	\includegraphics[width=\textwidth]{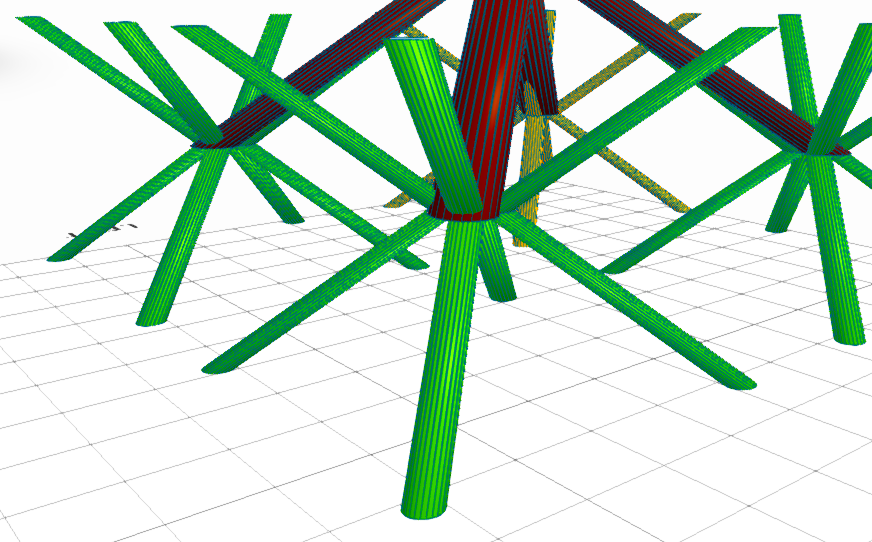}
		 \caption{Closeup on a subtree of rank two.}
		 \label{fig:steptwo_brunch_closeup}
		    }
		    \end{subfigure}
	   \begin{subfigure}[b]{\columnwidth}
		    {
	        	\includegraphics[width=\textwidth]{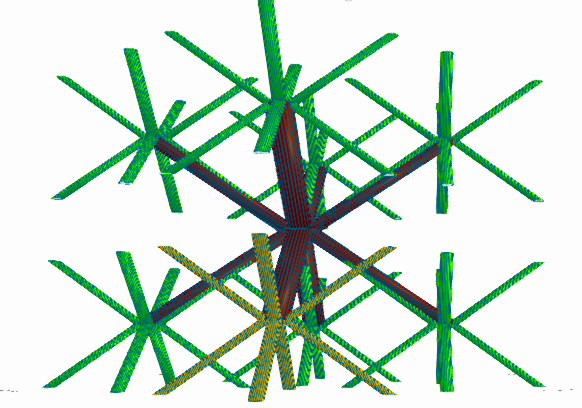}
		 \caption{A subtree of rank two structure.}
		 \label{fig:steptwo_structure}
		    }
		    \end{subfigure}
\end{minipage}%
\hfill
\begin{minipage}[b]{0.59\columnwidth}
\centering
	    \begin{subfigure}[b]{\columnwidth}
		{
	      	\includegraphics[width=\textwidth]{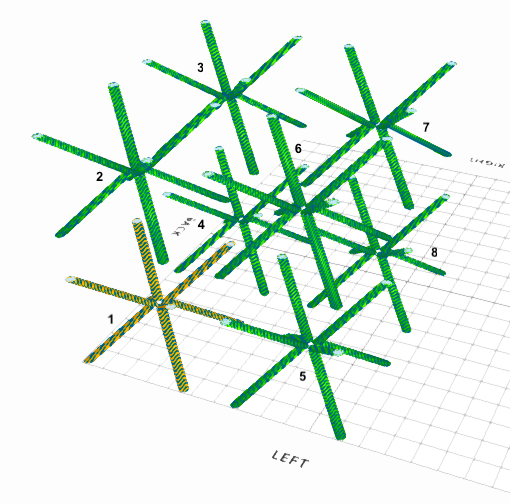}
		\caption{A collection of basic subtrees for step two.}
		\label{fig:steptwo_leaves}
 		   }
		   \end{subfigure}\\
\end{minipage}%

	    \caption{This figure depicts the first 2 steps in dimension $d=3$. The basic subtrees in Figure (\ref{fig:steptwo_leaves}) describes all the basic subtrees that participate in the second step. They are enumerated to indicate they have different diameters. The first one is the basic subtree from the first step appearing in Figure \ref{fig:stepone}. It has diameter $\eps_1$ while the others have diameter $\eps_2$. Figure (\ref{fig:steptwo_brunch_closeup}) zooms in on how the tubes which are considered `brunches' (when they are wide enough) are connected to the subtrees of rank 1.}
	    \label{fig:steps3d}
\end{figure}

\paragraph{Constructing an outer subtree of rank $(k+1)$} In general, we would like the outer subtree of rank $(k+1)$ to have two kinds of brunches diameters, corresponding to two stages of the tree.
\newpage Given $k$ we will describe the outer subtree of rank $(k+1)$ centered at $2^kv_0$. For all the other outer trees of rank $(k+1)$ we will use translations and rotations of this structure. Define $\eps_k:=2^{s_k-k}$ for an integer $s_k$ satisfying that 
$$
\bb{\frac{f(2^k)}{2^k}}^{\frac1{d-1}}\le s_k^{\frac1{d-1}}2^{s_k}\le4\bb{\frac{f(2^k)}{2^k}}^{\frac1{d-1}}.
$$
We will construct the outer subtree of rank $(k+1)$, $\widetilde{T_{k+1}}$, starting from the outside in, i.e starting from the big brunches. The cube $2^kv_0+[-2^k,2^k)^d$ is composed of $2^d$ dyadic cubes of order $k$, i.e of edge length $2^k$. We connect the center of each of these cubes to $2^kv_0$ by tubes of relative diameter $\eps_k$, i.e tubes of diameter $2^k\eps_k$. We place these tubes so that their centerlines are the lines connecting $2^kv_0$ with the center of these dyadic cubes. Next, every dyadic sub-cube of order $k$ is composed of $2^d$ dyadic sub-cubes of order $k-1$. We connect the center of each such sub-cube with the center of the dyadic cube of order $k$ containing it by a tube of relative diameter $\eps_k$. We continue this process for $s_k$ steps. We continue to connect dyadic cubes until we reach the cubes of order 0, these are basic cubes. Unlike the first $s_k$ steps, on the consecutive steps we use tubes of absolute diameter $\eps_1$. We denote this structure by $\widetilde{T_{k+1}}$.

\paragraph{The general step} To create the subtree of degree $k+1$ centered at $2^{k}v_0$, we take the inner subtree of rank $k$ we already created, $T_k$, and $2^d-1$ translated and rotated copies of $\widetilde{T_k}$ constructed above as the outer trees of degree $k$ centered at $2^kv_0+2^{k-1} v_j$ for $j\in\bset{1,\cdots,2^d-1}$. We then connect the center of each subtree to $2^{k}v_0$ using a tube of relative diameter $\delta_k:=\bb{\frac{f(2^k)}{2^{k\cdot d}}}^{\frac1{d-1}}$. We let $T$ denote the limiting set, i.e $T=\bunion k 1 \infty T_k$.

\paragraph{Properties of the set $T$} Given a basic cube, $I$, we say the set $T$ is {\it sparse in $I$} if 
$$
m_d(I\setminus T)>1-\sqrt d\cdot (2\eps_1)^{d-1}>\frac12,
$$
(the latter holds for $\eps_1$ small enough). We point out that every basic cube intersects the tree $T$, for if $I$ intersects a brunch then in particular it intersects $T$. Otherwise, it intersects a basic subtree, and those have leaves of diameter $\eps_1$, meaning in this case, $T$ is sparse in $I$.
\begin{claim}\label{clm:bad_guys}
For every $k$
$$
\#\bset{\text{basic cubes in }[0,2^k)^d\atop \text{in which }T\text{ is NOT sparse}}\lesssim f(2^k).
$$
\end{claim}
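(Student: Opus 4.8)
The plan is to show that the basic cubes in which $T$ fails to be sparse are, up to a dimensional constant, exactly those meeting the ``thick'' part of the tree --- the brunches --- and that the total number of basic cubes meeting brunches inside $[0,2^k)^d$ is $\lesssim_d f(2^k)$. Recall that $T$ is \emph{not} sparse in a basic cube $I$ precisely when $m_d(I\cap T)\ge\sqrt d\,(2\eps_1)^{d-1}$.

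First I would prove that a basic cube meeting no brunch is sparse. Every basic cube meets $T$, and $T$ is a union of tubes, so such a cube meets only tubes of diameter $\eps_1$: the leaves of basic subtrees and the $\eps_1$-tubes used in the bottom parts of outer subtrees. The key geometric point is that, inside an order-$1$ dyadic cube, the $2^d$ leaves of its basic subtree sit one per octant around the (lattice) root, so each of the $2^d$ basic cubes meets exactly one leaf, and the tube entering the root from above is collinear with the one leaf pointing toward it; hence a basic cube meets at most one $\eps_1$-tube up to collinearity. Since a box-tube of diameter $\eps_1$ intersects a unit cube in measure at most $\sqrt d\,\eps_1^{d-1}$ (its core chord has length $\le\sqrt d$), such a cube $I$ satisfies $m_d(I\cap T)\le\sqrt d\,\eps_1^{d-1}<\sqrt d\,(2\eps_1)^{d-1}$ and is sparse. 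Consequently the number of non-sparse basic cubes in $[0,2^k)^d$ is at most $\sum_B\#\{\text{basic cubes meeting }B\}$, the sum over all brunches $B$ meeting $[0,2^k)^d$.

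Next I would identify those brunches. Since $T_k\subset[0,2^k)^d$, every brunch of $T_k$ qualifies; unwinding the recursion, $T_k$ carries, for each $j=1,\dots,k-1$, exactly $2^d$ brunches of relative diameter $\delta_j$ and length $\sim 2^j$, and, for each $r=1,\dots,k-1$, exactly $2^d-1$ copies of the outer subtree $\widetilde{T_r}$, each copy carrying $s_{r-1}$ levels of $\eps_{r-1}$-tubes with $2^{id}$ tubes of length $\sim 2^{\,r-1-i}$ and diameter $\sim 2^{\,s_{r-1}-i+1}$ at level $i$. Because the tubes are thin --- by $f(t)\le c_0 t^d$ their diameter is $\lesssim_d c_0^{1/(d-1)}$ times their length --- if $c_0$ is chosen small enough (depending on $d$), every remaining brunch of $T$ either misses $[0,2^k)^d$ or is one of the $\le 2^d$ brunches of relative diameter $\delta_k$ added at the next step, and the latter meet $[0,2^k)^d$ in a set of total measure $\lesssim_d f(2^k)$. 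For a tube of length $L$ and diameter $\delta$ one has $\#\{\text{basic cubes meeting it}\}\lesssim_d (L+1)(\delta+1)^{d-1}$, and every brunch appearing here, at all but boundedly many small scales, has $\delta\gtrsim 1$ and $L\gtrsim\delta$, so this is $\lesssim_d L\,\delta^{d-1}=m_d(B)$; the finitely many small-scale exceptions contribute $O_d(1)$.

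It then remains to bound $\sum_B m_d(B)$. A brunch of relative diameter $\delta_j=(f(2^j)/2^{jd})^{1/(d-1)}$ and length $\sim 2^j$ has $m_d(B)\sim_d f(2^j)$, so these contribute $\lesssim_d\sum_{j\le k}f(2^j)$. Inside one copy of $\widetilde{T_r}$, summing $2^{id}\cdot 2^{\,r-1-i}\cdot 2^{(s_{r-1}-i+1)(d-1)}$ over the levels $i$ the dependence on $i$ cancels, leaving $\sim_d s_{r-1}\,2^{\,r-1}\,2^{\,s_{r-1}(d-1)}$; and raising the defining inequality for $s_{r-1}$ to the power $d-1$ gives $s_{r-1}2^{s_{r-1}(d-1)}\sim_d f(2^{r-1})/2^{\,r-1}$, so this is $\sim_d f(2^{r-1})$, and the copies of outer subtrees contribute $\lesssim_d\sum_{j\le k}f(2^j)$ as well. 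Altogether $\sum_B m_d(B)\lesssim_d\sum_{j\le k}f(2^j)$, and since $f(t)\gg t$ forces the regularly varying $f$ to have index at least $1$, this geometric-type sum is $\lesssim f(2^k)$, which gives the claim. The two points needing care are the geometric lemma of the second paragraph --- the precise configuration of the $2^d$ leaves around the root of a basic subtree and the collinearity that limits each cube to one $\eps_1$-tube --- and the bookkeeping of the last two paragraphs: tracking lengths, diameters and multiplicities of the nested $\eps_j$-tubes, dispensing with the harmless ``incoming'' brunches from coarser scales via smallness of $c_0$, and spotting the telescoping that removes the level index and lets the defining inequality for $s_j$ collapse the sum to $f(2^{r-1})$.
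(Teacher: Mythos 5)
Your argument follows the paper's proof step for step: reduce to counting basic cubes that meet brunches (brunch-free cubes are sparse), estimate that count by the total volume of the brunches, use the telescoping of $2^{id}\cdot 2^{r-1-i}\cdot(\text{diameter})^{d-1}$ across the $s_{r-1}$ levels of each outer subtree together with the defining inequality for $s_{r-1}$ to get $\sim_d f(2^{r-1})$ per subtree, and collapse the sum over scales using the geometric decay of $f(2^j)/f(2^{j+1})$ forced by regular variation of index $\ge 1$; the paper merely packages the scale-summation as a recursion $b_k\lesssim b_{k-1}+\cdots$ whereas you unroll it directly. One small imprecision: the assertion that a brunch-free basic cube meets \emph{exactly one} $\eps_1$-tube is too strong near junctions, where a cube can graze pieces of several leaves; but those extra pieces contribute only $O_d(\eps_1^d)$, and the factor-of-$2^{d-1}$ slack built into the definition of ``sparse'' (via $(2\eps_1)^{d-1}$ rather than $\eps_1^{d-1}$) is exactly what absorbs them, so the conclusion is unaffected.
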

\begin{proof}
We note that $T$ is sparse in every basic cube which does not intersect a brunch. To bound the number of basic cubes in which $T$ is not sparse, we can therefore bound the number of basic cubes which intersect brunches. In addition, by the way the tree $T$ is constructed, it is enough to to look at $T_{k+1}$ to make such estimates for the cube $[0,2^k)^d$.\\
We begin by bounding the number of basic cubes intersecting brunches in $\widetilde{T_j}$, an outer subtrees of rank $j$. By the way $\widetilde{T_j}$ was constructed, all the brunches originate from the first $s_j$ steps of the construction:
\begin{eqnarray*}
&&\#\bset{\text{basic cubes intersecting brunches in }\widetilde{T_j}}\sim_d\; m_d\bb{\bset{\text{brunches in }\widetilde T_j}}\\
&&=\sumit \ell 0 {s_j-1} \#\bset{\text{dyadic cubes of order }(j-\ell)\text{ in }\widetilde{T_j}}\cdot \text{(tube length on step }\ell)\cdot\bb{\text{tube diameter on step }\ell}^{d-1}\\
&\sim&\sumit \ell 0 {s_j-1} 2^{d\cdot\ell}\cdot2^{j-\ell}\cdot\bb{\eps_j\cdot 2^{j-\ell}}^{d-1}=2^{j\cdot d}\eps_j^{d-1}\cdot s_j=2^{j\cdot d}\cdot  \bb{\frac{2^{s_j}}{2^j}}^{d-1}\cdot \bb{s_j^{\frac1{d-1}}}^{d-1}\\
&=&2^{j}\bb{2^{s_j}\cdot s_j^{\frac1{d-1}}}^{d-1}\le 2^{j}\cdot 4^{d-1}\cdot\frac{f(2^j)}{2^j}=4^{d-1}f(2^j),
\end{eqnarray*}
by the way the sequence $\bset{s_j}$ was chosen. Now, $f$ is regularly varying, therefore there exists a slowly varying function $g$ so that $f(t)=t^\alpha g(t)$, where $\alpha\in[1,d]$, as $f(t)\gg t$. This implies that for every $t\ge t_0$ large enough
$$
\frac{f(t)}{f(2t)}=\frac1{2^\alpha}\cdot\frac{g(t)}{g(2t)}\in\bb{\frac 1{2^{d+1}},\frac23}.
$$
We will now bound the number of basic cubes in $[0,2^k)^d$ in which $T_{k+1}$ is not sparse. Let $j_0$ be the first integer satisfying $2^{j_0}>t_0$, and let $b_j$ denotes the number of basic cubes intersecting a brunch in $T_j$. Using the estimate we showed for outer trees,
\begin{eqnarray*}
b_k&=&b_{k-1}+\#\bset{\text{basic cubes in the `handle'}\atop{\text{connecting the subtrees}}}+\bb{2^d-1}\#\bset{\text{basic cubes intersecting brunches}\atop\text{in an outer subtree of rank }(k-1)}\\
&\lesssim_d& b_{k-1}+2^k\cdot\bb{2^k\cdot\delta_k}^{d-1}+f(2^{k-1})=b_{k-1}+2^{k\cdot d}\cdot \frac{f(2^k)}{2^{k\cdot d}}+f(2^{k-1})\le b_{k-1}+2f(2^k)\lesssim\sumit j 1 k f(2^j)\\
&=&\sumit j 1 {j_0} f\bb{2^j}+ f\bb{2^k}\sumit j {j_0+1}{k} \frac{f(2^j)}{f\bb{2^k}}= \sumit j 1 {j_0} f\bb{2^j}+f\bb{2^k}\sumit j{j_0+1}k \prodit \ell j {k-1}\frac{f(2^\ell)}{f\bb{2^{\ell+1}}}\\
&\le&\sumit j 1 {j_0}f(2^j)+f\bb{2^k}\sumit j {j_0+1}k \bb{\frac23}^{k-j}\sim f\bb{2^{k}},
\end{eqnarray*}
since we saw that $\frac{f(t)}{f(2t)}\le\frac23$. This concluds the proof, since the number of basic cubes in $[0,2^k)^d$, in which $T_{k+1}$ is not sparse, is bounded by $b_k$ plus the basic cubes intersecting the `handle' connecting $T_k$ to $T_{k+1}$, which intersects $\lesssim_d\; f(2^{k+1})\sim_d\; f(2^k)$ basic cubes.
\end{proof}
\subsection{The function}
In this subsection we shall construct the desired subharmonic function. The support of this function will be closely related to the `tree set' described in subsection \ref{sec:Tree} above. The first step is to construct a subharmonic function which is supported on half tubes.

For every $\eps>0$ we define the function $T_\eps:\R^d\rightarrow\R$ by
$$
T_\eps(x)=T_\eps(x_1,x_2,\cdots,x_d):=\cosh\bb{\frac{\pi\sqrt{d-1}}\eps\cdot x_1}\prodit j 2 d\cos\bb{\frac\pi\eps\cdot x_j}\cdot\indic{\bset{\abs{x_j}<\frac\eps2}}(x),
$$
for $x=(x_1,\cdots,x_d)\in\R^d$. The set $\bset{T_\eps\neq 0}$ is an infinite tube of diameter $\eps$ whose centerline is the $x_1$-axis. The function $T_\eps$ is subharmonic as its laplacian is 0 on $\R^d\setminus \bunion j 2 d \bset{\abs{x_j}=\frac\eps 2}$ and along the partial hyper-planes composing the boundary of this set,  it is continuous and satisfies the mean value property.

We define the function
$$
L_\eps(x)=	\begin{cases}
				\max\bset{T_\eps(x)-1,0}&, x_1\ge 0\\
				0&, \text{otherwise}
			\end{cases}.
$$
Figure \ref{fig:leaf} bellow shows the support of the function $L_{\frac\pi4}$, i.e the set $\bset{L_{\frac\pi4}>0}$.
\begin{figure}[!ht]
\centering
\includegraphics[width=0.35\textwidth]{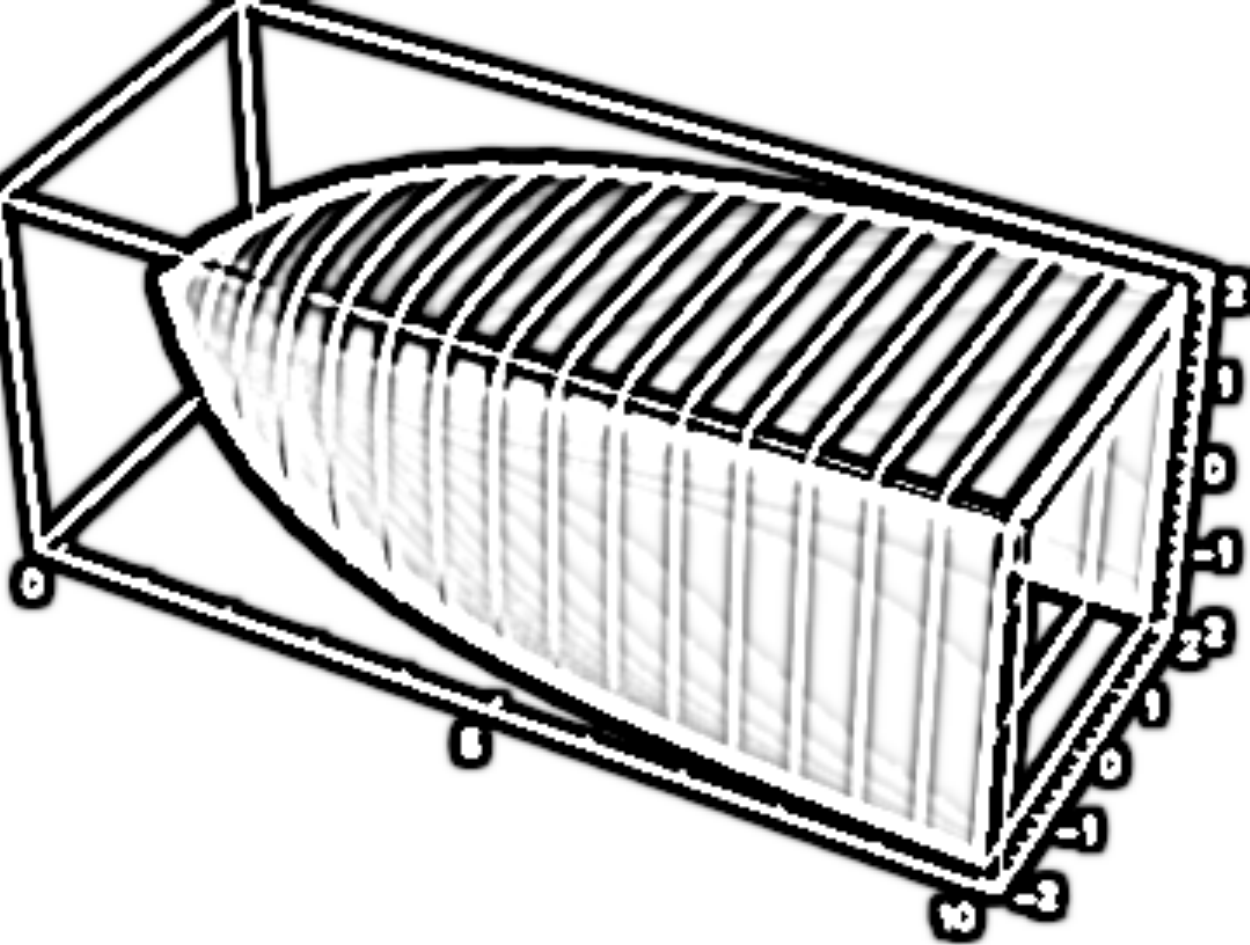}
  \caption{This figure shows the surface $\bset{L_{\frac\pi4}>0}$ in dimension 3.}
  \label{fig:leaf}
\end{figure}
\paragraph{Properties of the function $L_\eps$}
\begin{enumerate}
\item The function $L_\eps$ is continuous and subharmonic as locally it is either zero or the maximum between two subharmonic functions. 
\item \label{L_eps:low_bnd} Define the set
$$
G_\eps:= \sbb{\bintersect j 2 d\bset{\abs{x_j}\le\frac\eps 3}}\cap\bset{\abs{x_1}\ge\frac{\eps\cdot d\log 2}{\pi\sqrt{d-1}}}.
$$
The set $G_\eps$ is a closed set, and since for every $\abs t<\frac\eps 3$,  $\cos\bb{\frac{\pi}\eps\cdot t}\ge\cos\bb{\frac{\pi}\eps\cdot\frac\eps3}=\frac12$, every $x\in G_\eps$ satisfies
$$
L_\eps(x)\ge 2^{-(d-1)}\cosh\bb{\frac{\pi\sqrt{d-1}}\eps\cdot x_1}> 2^{-(d-1)}\frac{\exp\bb{\frac{\pi\sqrt{d-1}}\eps\cdot \abs{x_1}}}2> 2^{-d}\exp\bb{\log\bb{2^d}}=1.
$$
\item \label{L_eps:zero_set}The set $\bset{L_\eps>0}$ is contained in a tube of diameter $\eps$ interseced with $\bset{x_1>0}$.
\item \label{L_eps:up_bnd}Given a bounded set $E\subset\R^d$, $M_{L_\eps}(E):=\underset{x\in E}\sup\; L_\eps(x)\le \exp\bb{\frac{\pi\sqrt{d-1}}\eps\cdot \underset{x\in E}\sup\; \abs{x_1}}$.
\end{enumerate}
We are now ready to construct the function. Our function will be a local uniform limit of a sequence of functions. The idea is that every brunch of diameter $\delta>0$ in the set $T$, constructed in Subsection \ref{sec:Tree}, will correspond to a rotation and/or translation of the function $L_\delta$. Connecting subtrees will be done by taking maximum of some big constant times $L_{\delta^*}$ where $\delta^*$ is the diameter of the brunch connecting the subtrees in $T$.
\paragraph{A function corresponding to an outer subtree of rank $(k+1)$} We first point out that composing $L_\eps$ on a translation and/or a rotation does not change its growth-rate, only instead of taking supremum over $\abs{x_1}$ we should measure the distance along the translated (and/or rotated) $x_1$-axis from where the origin was translated. We will construct the function corresponding to the outer subtree, which is centered at $2^kv_0$, in $k$ steps.\\
For every $1\le j\le 2^d$ we keep the notation used in the previous subsection for vertices of the cube $[-1,1]^d$, $v_j$, and let $L_j$ be the function $L_{2^k\eps_k}$ translated and rotated so that the $x_1$-axis is aligned with the line connecting $2^kv_0+2^{k-1}v_j$ with $2^kv_0$, and the point $x=\bb{\frac{2\eps_k2^k\cdot d\log(2)}{\pi\sqrt{d-1}},0\cdots,0}$, is translated to the point $2^kv_0+2^{k-1}v_j$, while along the line described by the parametrisation $(1-t)\cdot \bb{2^kv_0+2^{k-1}v_j}+t\cdot2^kv_0$ the function $L_j$ increases with $t$. Let $R(x)$ be the rotation map taking the line connecting $2^kv_0$ and its `parent vertex', $2^{k+1}v_0$, to the $x_1$-axis, and define $L_*(x)=L_{2^k\eps_k}(R(x))$. Define the function
$$
v_1(x)=	\begin{cases}
		\max\bset{L_*(x), p_1\cdot L_j(x)}&, L_j(x)>0 \text{ and } x\nin R\inv\bb{G_{2^k\eps_k}}\\
		L_*(x)&, \text{otherwise}
		\end{cases},
$$
where $G_{2^k\eps_k}$ is the set defined in  property 2 of the function $L_\eps$. We will show that if $p_1$ is chosen correctly, then $v_1$ is well defined and subharmonic. It is enough to show that $p_1$ can be chosen so that on $\partial R\inv\bb{G_{2^k\eps_k}}$ already $L_*(x)>p_1\cdot L_j(x)$ for all $1\le j\le 2^d$. Using properties \ref{L_eps:low_bnd} and \ref{L_eps:up_bnd} of the function $L_\eps$ we see that
$$
\underset{x\in\partial R\inv\bb{G_{2^k\eps_k}}}\inf\; L_*(x)=\underset{x\in\partial G_{2^k\eps_k}}\inf\; L_{2^k\eps_k}(x)>1=p_1\cdot\underset{x\in\partial R\inv\bb{G_{2^k\eps_k}}}\sup\; L_j(x),
$$
if we choose 
$$
\frac1{p_1}:=\exp\bb{\frac{\pi d}{\eps_k}}\ge\underset{x\in\partial R\inv\bb{G_{2^k\eps_k}}}\sup\; L_j(x),
$$
since the distance between $2^kv_0+2^{k-1}v_j$ and $2^kv_0$ is bounded by $\sqrt d\cdot 2^{k-1}$. We continue to define the functions $v_2,\cdots,v_{s_k}$ in a similar manner with $L_j$ being translations and/or rotations of the function $L_{2^{k-m}\eps_k}$ and a sequence of constants $p_m$ satisfying that
$$
\frac{p_m}{p_{m+1}}=\exp\bb{\frac{\pi d}{\eps_k}}.
$$
We point out that on step $m$ we have $2^{d(k-m)}$ independent locations where we perform this `glueing procedure', but it is the same in every location (up to rotations and/or translations). We are therefore able to use the same constants, $\bset{p_m}$. After $s_k$ steps, we use the same construction but now with $L_{\eps_1}$ and $G_{\eps_1}$ instead of $L_{2^{k-m}\eps_k}$ and $G_{2^{k-m}\eps_k}$. Then for all $j>s_k$ we choose
$$
\frac{p_j}{p_{j+1}}=\exp\bb{\frac{\pi d\cdot2^j}{\eps_1}}.
$$
We then define $\tau_{k+1}(x):=\frac1{p_k}v_k(x)$.

For every basic cube, $I$, in which the tree $T$ is sparse, $M_{\tau_{k+1}}(I)\ge 1$ since along the centerline of every tube, $\tau_{k+1}\ge 1$. This makes every basic cube which does NOT intersect a brunch, a cube which satisfies both Property (\ref{eq:max}) and Property (\ref{eq:zero}), i.e $\tau_{k+1}$ oscillates in every basic cube which does not intersect a brunch.

To bound the growth of the function $\tau_{k+1}$ we use property \ref{L_eps:up_bnd} of the function $L_\eps$ for $\eps=2^k\eps_k=2^{s_k}$. Let $Q_k:=2^kv_0+[-2^k,2^k)^d$, then
\begin{eqnarray*}
\underset{x\in Q_k}\sup \tau_{k+1}(x)&=&\frac1{p_k}\underset{x\in Q_k}\sup L_*\le \frac1{p_1}\cdot\bb{\prodit j 1 {k-1}\frac{p_j}{p_{j+1}}}\cdot \exp\bb{\frac{\pi\sqrt{d-1}}{2^k\eps_k}\cdot \sqrt d\cdot 2^k}\\
&\le&\exp\bb{\pi d\bb{\frac{s_k}{\eps_k}+\frac1{\eps_1}\sumit j 1 {k-s_k}2^j}}\le\exp\bb{\pi d\bb{\frac{s_k}{\eps_k}+\frac1{\eps_1} 2^{k-s_k}}}\\
&=&\exp\bb{\frac{\pi d}{\eps_k}\bb{s_k+\frac1{\eps_1}}}\le\exp\bb{\frac{2\pi d\cdot s_k}{\eps_k}},
\end{eqnarray*}
for all $k$ large enough, by the maximum principle. Now, $s_k$ was chosen so that $1\le \frac{s_k^{\frac1{d-1}}2^{s_k}}{\bb{\frac{f(2^k)}{2^k}}^{\frac1{d-1}}}\le 4$, implying that
\begin{eqnarray*}
\underset{x\in Q_k}\sup \tau_{k+1}(x)&\le&\cdots\le \exp\bb{\frac{2\pi d\cdot s_k}{\eps_k}}=\exp\bb{\frac{2\pi d\cdot s_k\cdot 2^k}{2^{s_k}}}\le\exp\bb{\frac{2\pi d\cdot s_k\cdot 2^k}{\bb{\frac{f(2^k)}{s_k\cdot 2^k}}^{\frac1{d-1}}}}\\
&=&\exp\bb{\frac{2\pi d\cdot s_k^{\frac d{d-1}}\cdot \bb{2^{k}}^{\frac d{d-1}}}{f(2^k)^{\frac1{d-1}}}}\le \exp\bb{4\pi d\cdot \frac{2^{k\cdot\frac d{d-1}}}{f(2^k)^{\frac1{d-1}}}\log^{\frac d{d-1}}\bb{\frac{f(2^k)}{2^k}}}:=\mathcal M_k.
\end{eqnarray*}
Lastly, we point out that the support of the function $\tau_{k+1}$ is almost the same as an outer subtree of rank $k$ union with the support of the `handle', $L_*$. The difference is, we have additional small tips at the beginning of every tube, but the number of additional rogue basic cubes in these tips is bounded by their number in the entire brunch. Using an argument similar to the one done in Claim \ref{clm:bad_guys},
$$
\#\bset{\text{Rogue basic cubes in }[0,2^{k+1})^d}\le c_\tau f(2^{k+1}),
$$
for some uniform constant $c_\tau$.
\paragraph{Connecting outer subtrees of rank $k$ to the subtree $T_k$}
We will construct  the desirable function inductively, as a local uniform limit of a sequence of functions $\bset{u_k}$ satisfying that
\begin{enumerate}
\item $u_k$ is subharmonic.
\item $u_k$ is supported on $[0,2^k)^d$ and a translation and rotation of the function $L_{2^k\delta_k}$ (like a handle sticking out), for $\delta_k:=\bb{\frac{f(2^k)}{2^{k\cdot d}}}^{\frac1{d-1}}$, as defined in the previous subsection.
\item For every $j<k$ we have $u_k|_{[0,2^j)^d}\equiv u_j$.
\item $\underset{[0,2^k)^d}\sup u_k\le\mathcal M_k:= \exp\bb{4\pi d\cdot \frac{2^{k\cdot\frac d{d-1}}}{f(2^k)^{\frac1{d-1}}}\log^{\frac d{d-1}}\bb{\frac{f(2^k)}{2^k}}}$.
\item There exists a uniform constant $\gamma$ so that $\gamma([0,2^k)^d)\le\gamma\cdot f(2^k)$.
\end{enumerate}
Let $\tau_1$ be the function constructed above for $k=1$, and define $u_1(x)=\tau_1(x)$. This function is subharmonic as $\tau_1$ is, and properties 2,3,4 and 5 hold by default. Assume that $u_k$ was defined so that it satisfies properties 1,2,3,4 and 5 and let us construct the function $u_{k+1}$. We note that for every $k$, the function $\tau_k$ is supported on the outer subtree of rank $k$ union with the support of the function $L_*$, the latter being exactly translations and rotations of the function $L_{2^k\eps_k}$. Informally, to define $u_{k+1}$ we will `wrap' the support of $u_k$ with $2^d-1$ translations and/or rotations of the support of the function $\tau_k$, which we denote by $\tau_k^j$ for $1\le j\le 2^d-1$. We will only use translations and/or rotations of $\tau_k$ which are not the opposite of the direction of the component, as if not to have overlaps between the support of $u_k$ and the support of $\tau_k^j$. We then connect these functions with $L_{2^{k+1}\delta_{k+1}}$ in the `right direction'.

Formally, without loss of generality assume that $v_{2^d}=(-1,-1,\cdots,-1)$. We let $\tau_k^j(x)=\tau_k\circ R_j\inv(x-2^{k+1}v_0)$, where $R_j$ was the linear mapping taking $v_0+[-1,1]^d$ to $v_j+[-1,1]^d$ so that $R_j(0)=0$ and $R_j(v_0)=v_j$. We would like to `glue' together the definition of the function $u_k$ with the functions $\tau_k^j$. Let $L_{k+1}$ denote the composition of $L_{2^{k+1}\delta_{k+1}}$ with a translation and/or rotation which maps the set $\bset{x=(x_1,0,\cdots, 0)\in\R^d,\; x_1\ge\frac{2\eps_k2^k\cdot d\log(2)}{\pi\sqrt{d-1}}}$ to the line connecting $2^kv_0$ with $2^{k+1}v_0$. We will denote by $B_{k+1}$ the set $\R^d\setminus G_{2^{k+1}\delta_{k+1}}$ translated and rotated in the same manner as $L_{k+1}$. Let
$$
u_{k+1}(x)=	\begin{cases}
			\max\bset{\tau_k^j(x), \mathcal M_k\cdot L_{k+1}(x)}&, x\in B_{k+1}\\
			\mathcal M_k\cdot L_{k+1}(x)&, \text{otherwise}.
			\end{cases}
$$
To conclude that $u_{k+1}$ is subharmonic, it is enough show that on $\partial B_{k+1}$ already $\tau_k^j(x)<\mathcal M_k\cdot L_{k+1}(x)$ and so as subharmonicity is a local property, this implies that $u_{k+1}$ is subharmonic. A computation, similar to the one done in the construction of a function corresponding to an outer subtree of rank $k$, shows that
\begin{eqnarray*}
\underset{x\in\partial B_{k+1}}\inf\; \mathcal M_k\cdot L_{k+1}(x)\ge\frac{\mathcal M_k}{2^{d-1}}\cdot\cosh\bb{\frac{\pi\sqrt{d-1}}{2^{k+1}\cdot\delta_{k+1}}\cdot \frac{2^{k+1}\delta_{k+1}\cdot d\log(2)}{\pi\sqrt{d-1}}}&\ge&\mathcal M_k\cdot2^{-d}\exp\bb{\log\bb{2^d}}=\mathcal M_k,\\
\mathcal M_k&\ge&\underset{x\in\partial B_{k+1}}\sup\; \max\bset{\tau_k^1,\cdots,\tau_k^{2^d-1},u_k},
\end{eqnarray*}
based on the induction assumption and the bounds proved for the function $\tau_k$ in the previous paragraph. To conclude the proof we need to show that properties 4 and 5 hold for $u_{k+1}$, since the rest of the properties are obvious by the construction. 

To see property 5 holds, we note that if $c_\tau$ is a constant, so that the number of rogue basic cubes in the outer trees of rank $k$ is bounded by $c_\tau f(2^k)$, then by the induction assumption
\begin{eqnarray*}
\gamma_f^{u_{k+1}}\bb{\left[0,2^{k+1}\right)^d}&=&\gamma_f^{u_k}\bb{\left[0,2^k\right)^d}+2^{k+1}\bb{2^{k+1}\delta_{k+1}}^{d-1}+\bb{2^d-1}\gamma_f^{\tau_k}\bb{\left[0,2^k\right)^d}\\
&\le&\gamma\cdot f(2^k)+f\bb{2^{k+1}}+c_\tau\bb{2^d-1}f(2^k)=f\bb{2^{k+1}}\bb{1+\frac{f(2^k)}{f\bb{2^{k+1}}}\bb{\gamma+c_\tau\bb{2^d-1}}}\\
&\le&f\bb{2^{k+1}}\bb{1+\frac23\bb{\gamma+c_\tau\bb{2^d-1}}},
\end{eqnarray*}
as we saw that for $k$ large enough $\frac{f(2^k)}{f\bb{2^{k+1}}}\le\frac23$. If we choose $\gamma$ so that
$$
\gamma\ge1+\frac23\bb{\gamma+c_\tau\bb{2^d-1}}\iff \gamma\ge 3\bb{1+\frac23c_\tau\bb{2^d-1}},
$$
then 
\begin{eqnarray*}
\gamma_f^{u_{k+1}}\bb{\left[0,2^{k+1}\right)^d}&\le&f\bb{2^{k+1}}\bb{1+\frac23\bb{\gamma+c_\tau\bb{2^d-1}}}\le\gamma\cdot f\bb{2^{k+1}}
\end{eqnarray*}
and property 5 holds.

It is left to prove the upper bound on $u_{k+1}$, i.e to show that property 4 holds. Following the maximum principle and using property \ref{L_eps:up_bnd} of the function $L_{k+1}$ we see that
\begin{eqnarray*}
\underset{[0,2^{k+1})^d}\sup u_{k+1}=\mathcal M_k\cdot \underset{[0,2^{k+1})^d}\sup\;  L_{k+1}(x)&\le& \mathcal M_k\cdot \exp\bb{\frac{\pi\sqrt{d-1}}{2^{k+1}\delta_{k+1}}\cdot 2^{k+1}}\le\mathcal M_k\exp\bb{\frac{\pi d}{\delta_{k+1}}}.
\end{eqnarray*}
To conclude the proof of the upper bound, we need to show that the latter is bounded by $\mathcal M_{k+1}$, i.e
\begin{equation*}
\tag{$\clubsuit$} \mathcal M_k\cdot\exp\bb{\frac{\pi d}{\delta_{k+1}}}\le \mathcal M_{k+1} \label{eq:outPaste}.
\end{equation*}

To simplify of the notation, we let $\varphi(t):=\log^{\frac d{d-1}}\bb{\frac{f(t)}t}$. Then 
$$
\mathcal M_k:= \exp\bb{4\pi d\cdot \frac{2^{k\cdot\frac d{d-1}}}{f(2^k)^{\frac1{d-1}}}\log^{\frac d{d-1}}\bb{\frac{f(2^k)}{2^k}}}=\exp\bb{4\pi d\cdot \frac{2^{k\cdot\frac d{d-1}}}{f(2^k)^{\frac1{d-1}}}\varphi(2^k)},
$$
and therfore
\begin{eqnarray*}
\mathcal M_k\cdot \exp\bb{\frac{\pi d}{\delta_{k+1}}}\le\mathcal M_{k+1}&\iff& 4\pi d\cdot \frac{2^{k\cdot\frac d{d-1}}}{f(2^k)^{\frac1{d-1}}}\varphi(2^k)+\pi d\frac{2^{(k+1)\cdot\frac d{d-1}}}{f(2^{k+1})^{\frac1{d-1}}}\le 4\pi d\cdot \frac{2^{(k+1)\cdot\frac d{d-1}}}{f(2^{k+1})^{\frac1{d-1}}}\varphi\bb{2^{k+1}}\\
&\iff&\bb{\frac{f(2^{k+1})}{2^df(2^k)}}^{\frac1{d-1}}\cdot\varphi(2^k)+\frac14\le\varphi\bb{2^{k+1}}.
\end{eqnarray*}
Now, $f$ is regularly varying and so $f(t)=t^ag(t)$, where $a\in[1,d]$ (as $f(t)\gg t$) and $g$ is slowly varying. In particular,
\begin{eqnarray*}
\frac{f(2^{k+1})}{2^df(2^k)}=\frac{2^{a(k+1)}\cdot g\bb{2^{k+1}}}{2^d\cdot 2^{a\cdot k}g(2^k)}=2^{a-d}\cdot\frac{g\bb{2\cdot 2^k}}{g\bb{2^k}}\le\begin{cases}
																									2^{a-d}\bb{1+o(1)}&, a<d\\
																									1&, a=d
																									\end{cases}
\end{eqnarray*}
as $g$ is slowly varying in general, and monotone decreasing in the case $a=d$. Then (\ref{eq:outPaste}) holds if
$$
C_a\varphi(2^k)+\frac14\le \varphi\bb{2^{k+1}},
$$
for the constant
$$
C_a:=\begin{cases}
	2^{\frac{a-d}{d-1}}\bb{1+o(1)}&, a<d\\
	1&, a=d
\end{cases}.
$$

To conclude the proof we will use the following inequality: for every $x,y\ge 0$ and every $\alpha\ge1$ we have 
$$
x^\alpha+y^\alpha\le\bb{x+y}^\alpha.
$$
To see this inequality holds, fix $y$ and let $\psi(t):=\bb{t+y}^\alpha-\bb{t^\alpha+y^\alpha}$. Then $\psi'(t)=\alpha\bb{\bb{t+y}^{\alpha-1}-t^{\alpha-1}}\ge0$ as long as $y\ge0$ and $\alpha\ge1$, which implies that $\psi$ is monotone non-decreasing. As $\psi(0)=0$, we conclude that $\psi(t)\ge 0$ for all $t\ge0$, and the inequality follows.

Using this inequality, we see that (\ref{eq:outPaste}) holds if
\begin{align*}
&C_a^{\frac{d-1}d}\log\bb{\frac{f(2^k)}{2^k}}+\frac1{4^{\frac{d-1}d}}\le C_a^{\frac{d-1}d}\log\bb{\frac{f(2^k)}{2^k}}+\frac12\le\log\bb{\frac{f\bb{2^{k+1}}}{2^{k+1}}}\\
&\iff \bb{C_a^{\frac{d-1}d}-1}\log\bb{\frac{f(2^k)}{2^k}}+\frac12\le\log\bb{\frac{f\bb{2^{k+1}}}{2^{k+1}}}-\log\bb{\frac{f(2^k)}{2^k}}=\log\bb{\frac{f\bb{2^{k+1}}}{2f(2^k)}}.\tag{$\clubsuit\clubsuit$}\label{eq:Paste}
\end{align*}
To show that (\ref{eq:Paste}) holds and conclude the proof, we will look into two different cases:

$\bullet$ If $a<d$, then 
$$
C_a^{\frac{d-1}d}-1=2^{\frac{a-d}{d}}\bb{1+o(1)}-1\le 2^{\frac{a-d}{2d}}-1<0
$$
for $k$ large enough. On the other hand, since $f(t)\gg t$, then $\log\bb{\frac{f(2^k)}{2^k}}\rightarrow\infty$. We conclude that 
$$
\bb{C_a^{\frac{d-1}d}-1}\log\bb{\frac{f(2^k)}{2^k}}+\frac12\le \bb{2^{\frac{a-d}{2d}}-1}\log\bb{\frac{f(2^k)}{2^k}}+\frac12\rightarrow-\infty \text{ as }k\rightarrow\infty.
$$
In particular, the left hand side of (\ref{eq:Paste}) is bounded from above by $(-1)$ for all $k$ large enough.

We bound the right hand side of (\ref{eq:Paste}) from bellow by $(-1)$ since $a\ge1$ and so
$$
\log\bb{\frac{f\bb{2^{k+1}}}{2f(2^k)}}=\log\bb{2^{a-1}}+\log\bb{\frac{g(2\cdot 2^k)}{g(2^k)}}\rightarrow \log\bb{2^{a-1}}\ge 0\;\text{ as }k\rightarrow\infty,
$$
showing that if $a<d$, then (\ref{eq:Paste}) holds.

$\bullet$ If $a=d$, then the left hand side of (\ref{eq:Paste}) is $\frac12$, and since $g$ is slowly varying
$$
\log\bb{\frac{f\bb{2^{k+1}}}{2f(2^k)}}=\log\bb{2^{d-1}}+\log\bb{\frac{g(2\cdot 2^k)}{g(2^k)}}= \log\bb{2^{d-1}}-o(1)\ge \frac12,
$$
for all $k$ large enough, showing that if $a=d$, then (\ref{eq:Paste}) holds as well.

\paragraph{Defining a function on $\R^d$} Let $u_0$ denote the local uniform limit of this sequence of functions. To define the function $u$ we let
$$
u(x):=\sumit j 0 {2^d-1} u_0\circ R_j\inv(x),
$$
for the linear mappings $R_j$ taking $v_0$ to $v_j$ and the origin to itself.

It is left to bound the number of rogue basic cubes in every large cube centered at the origin. For every $k$
$$
\gamma_f^{u}\bb{[-2^k,2^k]^d}=2^d \gamma_f^{u_0}\bb{[0,2^k]^d}=2^d \gamma_f^{u_{k+1}}\bb{[0,2^k]^d}=2^d\cdot \frac{\#\bset{\text{ Rogue basic cubes in }[0,2^k]^d}}{f(2^k)}\le 2^d\cdot \gamma.
$$
For a general cube $Q=[-N,N]^d$ there exists $k$ so that $2^k\le N<2^{k+1}$ and as long as $N$ is large enough
$$
\gamma(Q)\le\frac{f(2^{k+2})\gamma([-2^{k+1},2^{k+1}]^d)}{f(2^{k+1})}\lesssim\frac{f(2^{k+2})}{f(2^{k+1})}=\frac{2^{\alpha\bb{k+2}}g\bb{2\cdot2^{k+1}}}{2^{\alpha\bb{k+1}}g\bb{2^{k+1}}}\le 2^{d+1}.
$$
Doing the same construction with $f$ rescaled so that the latter will be less than $1$, will only effect the growth by changing the constants, and so on one hand it will create an $f$-oscillating subharmonic function, on the other hand it will have the optimal growth as needed.

We conclude this section with two remarks:
\begin{rmk}
 One may wonder what happens if we require some oscillation, without bounding its magnitude from bellow, i.e replace property (\ref{eq:max}) with the property
 $$
 (\widetilde{P_1})\;\;\lambda_{d-1}(\bset{u>0}\cap I)>0.
 $$
 Using the same construction described above, with $\tau_k=v_k$, i.e without rescaling $v_k$ by $\frac1{p_k}$, generates an example where all but at most $f(N)$ basic squares in  $\left[-\frac N2,\frac N2\right]^d$ satisfy properties $(\widetilde{P_1})$ and $(\ref{eq:zero})$, while the growth of the function is bounded by a polynomial. This shows that a bound from bellow on the 
magnitude of the oscillation in each basic cube is essential.
\end{rmk}
\begin{rmk}
 We cannot use this construction to generate a translation invariant probability measure on the space of entire functions using the means described in \cite{Us2017}. The reason is, 
our function does not satisfy condition (10) in Lemma 7, that there exist a sequence of squares $\bset{S_k}\nearrow\C$ and a sequence of constants $\bset{\mathcal M_k}$ so that
$$
\limit k\infty\limitinf n\infty \frac{m_2\bb{\bset{w\in S_n,\; \underset{z\in w+S_k}\max\; u\le \mathcal M_k}}}{m_2(S_n)}=1.
$$
\end{rmk}

\section{Appendices}
\subsection{Appendix A- A proof for Claim \ref{clm:HarmonicBnd}}
The proof we present here is a concatenation of two inequalities. To state and prove these inequalities we will need the following definitions: Define the function
$$
k_d(t):=	\begin{cases}	
		\log \bb{t}&, d=2\\
		\frac{-1}{t^{d-2}}&,d\ge 3
		\end{cases}.
$$
Following \cite{Bishoperes}, for every measure $\nu$ we let
\begin{eqnarray*}
p_\nu(x)&:=&\integrate{\R^d}{}{k_d\bb{\abs{x-y}}}\nu(y)\\
I(\nu)&:=&\integrate{\R^d}{}{p_\nu(x)}\nu(x)=\integrate{\R^d}{}{\integrate{\R^d}{}{k_d\bb{\abs{x-y}}}\nu(y)}\nu(x).
\end{eqnarray*}
The first is called {\it the potential of the measure $\nu$} and the second is called {\it the energy of the measure $\nu$}. It is known that if the set $E$ is compact, then there exists a probability measure $\nu_0$ so that
$$
I(\nu_0)=\underset{\nu}\sup\; I(\nu)
$$
where the supremum is takes over all the Borel probability measures $\nu$ which are supported on $E$. The measure $\nu_0$ is called {\it the equilibrium measure of $E$}. For more information see, for example, Theorem 5.4 on page 209 of \cite{Haywood}.

We are now ready to state these inequalities:
\begin{claim}\label{clm:Hausdorff}
Let $E\subset B\bb{0,\frac12}$ be a compact set with $\lambda_{d-1}(E)>0$. If $\nu_0$ is the equilibrium measure of $E$, then
$$
\lambda_{d-1}(E)\lesssim_d\; \frac{-1}{I(\nu_0)}.
$$
\end{claim}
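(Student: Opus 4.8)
The plan is to deduce the claim from the variational characterization of $\nu_0$ together with an explicit Frostman competitor. Since $\nu_0$ is the equilibrium measure, $I(\nu_0)=\sup_\nu I(\nu)\ge I(\nu)$ for every Borel probability measure $\nu$ supported on $E$; moreover, because $E\subseteq B\bb{0,\frac12}$ forces $\abs{x-y}<1$, and hence $k_d(\abs{x-y})\le0$, for all $x,y\in E$, every such energy $I(\nu)$ is non-positive. Consequently it suffices to exhibit \emph{one} probability measure $\nu$ on $E$ with $-I(\nu)\lesssim_d 1/\lambda_{d-1}(E)$: then $-I(\nu_0)\le -I(\nu)\lesssim_d 1/\lambda_{d-1}(E)$, which is precisely $\lambda_{d-1}(E)\lesssim_d -1/I(\nu_0)$ (and in particular $I(\nu_0)$ is finite and negative).

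To produce such a $\nu$, I would invoke Frostman's lemma for the compact set $E$ with exponent $d-1$: there exist a constant $c_d>0$ depending only on $d$ and a finite Borel measure $\mu$ with $\mathrm{supp}\,\mu\subseteq E$ satisfying the growth bound $\mu\bb{B(x,r)}\le r^{d-1}$ for all $x\in\R^d$ and $r>0$, together with the mass bound $\mu(E)\ge c_d\,\lambda_{d-1}(E)$. In particular the growth bound forces $\mu$ to have no atoms. Set $\nu:=\mu/\mu(E)$, a Borel probability measure supported on $E$.

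The core estimate is a uniform bound $-p_\mu(x)\le C_d$ on $E$, obtained by a dyadic decomposition into annuli. Fix $x\in E$ and let $A_j:=\bset{y:\ 2^{-j-1}\le\abs{x-y}<2^{-j}}$ for $j\ge0$; since $\abs{x-y}<1$ throughout $E$ and $\mu$ is non-atomic, the sets $A_j$ with $j\ge0$ cover $E$ up to a $\mu$-null set. For $d\ge3$, on $A_j$ the integrand $\abs{x-y}^{-(d-2)}$ is at most $2^{(j+1)(d-2)}$, while $\mu(A_j)\le\mu\bb{B\bb{x,2^{-j}}}\le 2^{-j(d-1)}$, so
$$
-p_\mu(x)=\int_E\abs{x-y}^{-(d-2)}\,d\mu(y)\le\sum_{j=0}^{\infty}2^{(j+1)(d-2)}\,2^{-j(d-1)}=2^{d-2}\sum_{j=0}^{\infty}2^{-j}=2^{d-1}.
$$
For $d=2$ the same decomposition gives $-p_\mu(x)=\int_E\log\frac1{\abs{x-y}}\,d\mu(y)\le(\log2)\sum_{j=0}^{\infty}(j+1)\,\mu(A_j)\le(\log2)\sum_{j=0}^{\infty}(j+1)\,2^{-j}<\infty$; call the resulting uniform bound $C_d$ in either case. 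Integrating against $\mu$ then yields $-I(\mu)=\int_E\bb{-p_\mu(x)}\,d\mu(x)\le C_d\,\mu(E)$, and by the quadratic scaling of the energy,
$$
-I(\nu)=\frac{-I(\mu)}{\mu(E)^2}\le\frac{C_d}{\mu(E)}\le\frac{C_d}{c_d\,\lambda_{d-1}(E)}.
$$
Combined with the reduction of the first paragraph, this finishes the proof.

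The only ingredient that is not completely elementary is Frostman's lemma in its quantitative form, relating the Hausdorff content $\lambda_{d-1}$ to measures with the $(d-1)$-growth bound, with a comparison constant depending on $d$ alone; everything else is the variational definition of $\nu_0$ quoted in the excerpt and the dyadic-shell bound for a Riesz (or logarithmic) potential. I expect the only real care to be in keeping the cases $d=2$ and $d\ge3$ parallel and in tracking that every implied constant depends on $d$ alone.
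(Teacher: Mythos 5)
Your proof is correct and follows essentially the same route as the paper's: invoke Frostman's lemma with gauge $t^{d-1}$ to obtain a measure on $E$ with $(d-1)$-growth and mass $\gtrsim_d\lambda_{d-1}(E)$, bound its potential uniformly via a dyadic-annulus decomposition, normalize to a probability measure $\nu$, and conclude via the variational characterization $I(\nu_0)\ge I(\nu)$. The only cosmetic differences are the order in which you state the variational reduction and a slightly different normalization convention in Frostman's lemma.
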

The proof of this claim is a variation of the proof on p.94 in \cite{Bishoperes}. For the sake of completeness, and since the statement is not exactly the same, we bring here the full proof.

We will use the following result:
\begin{lem}[Frostman's Lemma]\label{lem:Frostman}
Let $\varphi$ be a gauge function, i.e a positive, increasing function on $[0,\infty)$ with $\varphi(0) = 0$. Let $K\subset\R^d$ be a compact set with positive $\varphi$-Hausdorff content, $\lambda_\varphi(K) > 0$. Then there is a positive Borel measure $\mu$ on $K$ satisfying that for every ball of radius $r$, $B$, $\mu(B)\le C_d\cdot\varphi(r)$ while $\mu(K)\ge\lambda_\varphi(K)$. The constant $C_d$ depends on the dimension alone.
\end{lem}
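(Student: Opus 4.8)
The plan is to produce $\mu$ by the classical dyadic ``mass distribution'' (stopping-time) construction. After translating and rescaling $\R^d$ — which replaces the gauge $\varphi(t)$ by $\varphi(\lambda t)$, still a gauge function, and does not affect the dimensional constant $C_d$ — I may assume $K\subseteq Q_0:=[0,1)^d$, and I work with the standard dyadic subdivision of $Q_0$ (generation-$n$ cubes have side $2^{-n}$, and $Q_0$ is the unique generation-$0$ cube). Fix a large integer $N$. First I would put, at generation $N$, the measure $\mu_N:=\sum_Q \varphi(\mathrm{diam}\,Q)\,|Q|^{-1}\,\mathrm{Leb}|_Q$, the sum over generation-$N$ dyadic cubes $Q$ meeting $K$. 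Then, descending from generation $n+1$ to generation $n$ for $n=N-1,\dots,0$, I define $\mu_n$ by leaving $\mu_{n+1}$ unchanged on each generation-$n$ cube $Q$ with $\mu_{n+1}(Q)\le\varphi(\mathrm{diam}\,Q)$ and replacing $\mu_{n+1}|_Q$ by $\frac{\varphi(\mathrm{diam}\,Q)}{\mu_{n+1}(Q)}\mu_{n+1}|_Q$ otherwise; set $\mu^{(N)}:=\mu_0$. Since every step only rescales mass inside cubes by a factor $\le1$, and the step at a cube's own generation forces its total mass to be $\le\varphi(\mathrm{diam})$, one gets at once $\mu^{(N)}(Q)\le\varphi(\mathrm{diam}\,Q)$ for every dyadic cube $Q$ of generation between $0$ and $N$; in particular $\mu^{(N)}(\R^d)=\mu^{(N)}(Q_0)\le\varphi(\sqrt d\,)$.

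The crux is the lower bound $\mu^{(N)}(\R^d)\ge\lambda_\varphi(K)$. Call a generation-$n$ cube ($n\le N-1$) a \emph{stopping cube} if the rescaling at its step was strict, i.e. $\mu_{n+1}(Q)>\varphi(\mathrm{diam}\,Q)$; for such $Q$ we have $\mu_n(Q)=\varphi(\mathrm{diam}\,Q)$, and if in addition no dyadic ancestor of $Q$ is a stopping cube then no later rescaling alters $Q$, so $\mu^{(N)}(Q)=\varphi(\mathrm{diam}\,Q)$. Let $\mathcal M$ be the collection consisting of the \emph{maximal} stopping cubes together with all generation-$N$ cubes meeting $K$ that have no stopping-cube ancestor. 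I would check that these cubes are pairwise disjoint, that every generation-$N$ cube meeting $K$ lies in one of them (so $K\subseteq\bigcup_{Q\in\mathcal M}Q$ and $\mathrm{supp}\,\mu^{(N)}\subseteq\bigcup_{Q\in\mathcal M}Q$), and that each $Q\in\mathcal M$ satisfies $\mu^{(N)}(Q)=\varphi(\mathrm{diam}\,Q)$. Consequently $\mu^{(N)}(\R^d)=\sum_{Q\in\mathcal M}\varphi(\mathrm{diam}\,Q)\ge\lambda_\varphi(K)$, because $\{Q:Q\in\mathcal M\}$ is a cover of $K$ and $\lambda_\varphi(K)$ is the infimum of $\sum\varphi(\mathrm{diam})$ over all covers.

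Finally I would pass to the limit. Each $\mu^{(N)}$ is supported in $\{\mathrm{dist}(\cdot,K)\le\sqrt d\,2^{-N}\}$ and has total mass in $[\lambda_\varphi(K),\varphi(\sqrt d\,)]$, so by standard weak-$*$ compactness a subsequence $\mu^{(N_j)}$ converges vaguely to a Borel measure $\mu$; since the supports shrink to the closed set $K$, $\mu$ is supported on $K$, and testing against a bump function equal to $1$ near $K$ gives $\mu(K)=\lim_j\mu^{(N_j)}(\R^d)\ge\lambda_\varphi(K)$. For the upper bound, fix a ball $B(x,r)$ with $r<\sqrt d$; for $j$ large I choose the largest dyadic scale $2^{-n}$ with $\sqrt d\,2^{-n}<r$ (so $n\le N_j$), note that $B(x,r)$ meets at most a dimensional constant number of generation-$n$ dyadic cubes, and conclude $\mu^{(N_j)}(B(x,r))\le C_d\,\varphi(\sqrt d\,2^{-n})<C_d\,\varphi(r)$; letting $j\to\infty$ yields $\mu(B(x,r))\le C_d\,\varphi(r)$, while for $r\ge\sqrt d$ one simply uses $\mu(B(x,r))\le\mu(K)\le\varphi(\sqrt d\,)\le\varphi(r)$.

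The main obstacle is the bookkeeping of the second paragraph: showing that the truncations cannot drive the total mass below $\lambda_\varphi(K)$ requires exhibiting, through the maximal stopping cubes, a genuine cover of $K$ on which $\mu^{(N)}$ carries exactly the full gauge-mass $\varphi(\mathrm{diam})$ — everything else (the per-cube upper bound, the weak-$*$ extraction, and the ball estimate) is routine.
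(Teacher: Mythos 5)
The paper does not give its own proof of Frostman's Lemma: it simply cites \cite{Bishoperes} (Lemma~3.1.1, p.~83), where the standard dyadic stopping-time / mass-distribution construction is carried out. Your proposal reproduces exactly that construction and is correct, so it matches the paper's intended source rather than offering a genuinely different route.
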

This lemma, and its proof can be found for example as Lemma 3.1.1, p.83 in \cite{Bishoperes}.
\begin{proof}[Proof of Claim \ref{clm:Hausdorff}]
Let $\mu$ be the measure we obtain by using Lemma \ref{lem:Frostman} with $\varphi(t)=t^{d-1}$ and $K=E$. We begin by bounding from bellow the potential of the measure $\mu$. Then since $-k_d$ is a monotone decreasing positive function on $[0,1]$ and $diam(E)\le 1$
\begin{eqnarray*}
-p_\mu(x)&:=&\integrate{E}{}{-k_d(\abs{x-y})}\mu(y)\le\sumit n 0\infty \integrate{\abs{x-y}<2^{-n-1}}{\abs{x-y}<2^{-n}}{-k_d(\abs{x-y})}\mu(y)\\
&\le&\sumit n 0\infty \left\{-k_d\bb{2^{-n-1}}\left[\mu\bb{\bset{\abs{x-y}<2^{-n}}}-\mu\bb{\bset{\abs{x-y}<2^{-n-1}}}\right]\right\}\\
&\le&C_d\sumit n 0 \infty 2^{-n(d-1)} \bb{-k_d\bb{2^{-(n+1)}}}=2^{d-1}\cdot C_d\sumit n 0 \infty 2^{-(n+1)(d-1)}\bb{ -k_d\bb{2^{-(n+1)}}}\\
&\le&2^{d-1}\cdot C_d\sumit n 1 \infty 2^{-n(d-1)}\bb{ -k_d\bb{2^{-n}}}=	\begin{cases}
														2^{d-1}\cdot C_d\sumit n 1 \infty \log\bb{2^n}2^{-n}& , d=2\\
														2^{d-1}\cdot C_d\sumit n 1 \infty \frac{2^{n(d-2)}}{2^{n(d-1)}}&, d\ge 3
														\end{cases}
\end{eqnarray*}
both series are convergent and bounded by 2. We conclude that $-p_\mu(x)\le 2^dC_d$. Next, define the measure $d\nu=\frac{d\mu}{\mu(E)}$. Then $\nu$ is a probability measure supported on $E$, while
$$
I(\nu)=\frac1{\mu(E)^2}I(\mu)=\frac{-1}{\mu(E)^2}\integrate{\R^d}{}{-p_\mu(x)}\mu(x)\ge\frac{-2^dC_d}{\mu(E)}\ge\frac{-2^dC_d}{\lambda_{d-1}(E)}.
$$
By definition of equilibrium measure we see that
$$
I(\nu_0)=\underset{\mu}\sup\; I(\mu)\ge I(\nu)\ge \frac{-2^dC_d}{\lambda_{d-1}(E)}\Rightarrow \lambda_{d-1}(E)\le\frac{-2^dC_d}{I(\nu_0)},
$$
as $I(\nu_0)<0$, concluding our proof.
\end{proof}

\begin{claim}\label{clm:harm_kib}
Let $E\subset B\bb{0,\frac12}$ be a compact set with $\abs{I(\nu_0)}<\infty$. Then
$$
\omega(0,E;B(0,1)\setminus E)\gtrsim_d\; \frac{-1}{I(\nu_0)}.
$$
\end{claim}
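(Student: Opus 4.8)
The plan is to compare the harmonic measure $\omega(0,E;B(0,1)\setminus E)$ with the equilibrium potential of $E$, and then bound that potential below by $-1/I(\nu_0)$. Let $\nu_0$ be the equilibrium measure of $E$, and consider the function $v(x) := p_{\nu_0}(x)/I(\nu_0)$ (note $I(\nu_0)<0$, so dividing flips signs). On $E$, the equilibrium potential is essentially constant and equal to $I(\nu_0)$ up to a set of zero capacity (Frostman's theorem for the equilibrium measure), so $v \equiv 1$ quasi-everywhere on $E$; more precisely $v \ge 1$ on $E$ outside a polar set. The function $p_{\nu_0}$ is superharmonic off $E$ (for $d\ge 3$ the kernel $-k_d(|x|)=|x|^{-(d-2)}$ is superharmonic, and $\nu_0$ has compact support in $\overline{B(0,1/2)}$), hence $v$ is subharmonic on $B(0,1)\setminus E$.

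First I would set up the comparison on the domain $\Omega := B(0,1)\setminus E$. On $\partial\Omega$ we have two pieces: on $E$, $v\ge 1$ (q.e.), while $\omega(\cdot, E;\Omega)=1$ there; on $\partial B(0,1)$, $v$ is bounded — indeed, since all the mass of $\nu_0$ sits inside $B(0,1/2)$, for $|x|=1$ we have $|x-y|\ge 1/2$, so $|p_{\nu_0}(x)| \le -k_d(1/2) = 2^{d-2}$ (for $d\ge3$; a $\log$ bound for $d=2$), hence $v(x) = p_{\nu_0}(x)/I(\nu_0) \le 2^{d-2}/|I(\nu_0)|$ there, which is a nonnegative bound. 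So the subharmonic function $v$ on $\Omega$ satisfies, at the boundary, $v \le 1\cdot\mathbf 1_E + \big(2^{d-2}/|I(\nu_0)|\big)\mathbf 1_{\partial B(0,1)}$ in the appropriate a.e./q.e. sense. By the maximum principle applied through harmonic measure,
$$
v(0) \;\le\; \omega(0,E;\Omega) \;+\; \frac{2^{d-2}}{|I(\nu_0)|}\,\omega\big(0,\partial B(0,1);\Omega\big) \;\le\; \omega(0,E;\Omega) + \frac{2^{d-2}}{|I(\nu_0)|}.
$$
On the other hand $v(0) = p_{\nu_0}(0)/I(\nu_0)$; since $\nu_0$ is supported in $B(0,1/2)$, for every $y$ in its support $|0-y|\le 1/2$, so $-k_d(|y|) \ge -k_d(1/2) = 2^{d-2}$ (for $d\ge3$), giving $-p_{\nu_0}(0) = \int -k_d(|y|)\,d\nu_0(y) \ge 2^{d-2}$, i.e. $v(0) = -p_{\nu_0}(0)/|I(\nu_0)| \ge 2^{d-2}/|I(\nu_0)|$. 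Actually I expect a cleaner lower bound $v(0)\gtrsim_d 1 - C/|I(\nu_0)|$ is not available; instead I would argue that $v(0)\ge c_d/|I(\nu_0)|$ with $c_d = 2^{d-2}$, and then
$$
\omega(0,E;\Omega) \;\ge\; v(0) - \frac{2^{d-2}}{|I(\nu_0)|} \;\ge\; \frac{2^{d-2}}{|I(\nu_0)|} - \frac{2^{d-2}}{|I(\nu_0)|},
$$
which is vacuous — so the bound on $v(0)$ needs to be strictly better than the bound at $\partial B(0,1)$. The fix is to use a sharper lower estimate of $-p_{\nu_0}(0)$: since $\operatorname{diam} E \le 1$ but $0$ may be far from $E$, replace the crude $|y|\le 1/2$ bound by noting $E\subset \overline{B(0,1/2)}$ forces $-k_d(|y|)\ge -k_d(1/2)$ with strict gain unless $E$ hugs $\partial B(0,1/2)$; more robustly, one rescales so the relevant ball where $\nu_0$ lives has radius bounded away from $1$, producing a genuine gap. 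This is the step I expect to be the main obstacle: extracting a constant $c_d > 2^{d-2}$ from $v(0)$, or equivalently verifying that $\omega(0,\partial B(0,1);\Omega)$ is bounded strictly below $1$ by a dimensional constant (which holds because $E$ has positive content, hence positive capacity, hence absorbs a definite fraction of the harmonic measure from the center).

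Concretely, I would instead prove the cleaner chain: $v$ subharmonic on $\Omega$, $v\le \omega(0,E;\Omega)$-majorant argument gives $v(0) \le \omega(0,E;\Omega) + \|v\|_{L^\infty(\partial B(0,1))}$; combine with $v(0)\ge 1$ — which is what actually holds, because $\nu_0$ is the equilibrium measure and its potential at any point of $\overline{B(0,1/2)}\supset \operatorname{supp}\nu_0$, in particular after noting $0$ lies in the convex hull region, satisfies $-p_{\nu_0}(0)\ge |I(\nu_0)|$ is false in general, so one uses instead that the total mass and the kernel force $-p_{\nu_0}(0) \ge$ (some fixed fraction of) $|I(\nu_0)|$ only when $0$ is near $E$. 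Since in our application $E\subset B(0,1/2)$ and we evaluate at $0$, the honest statement is: rescale by a factor $2$ so that $E\subset B(0,1)$ and we look at $\omega$ in $B(0,2)\setminus E$; then $\|v\|_{L^\infty(\partial B(0,2))} \le -k_d(1)/|I(\nu_0)| = 1/|I(\nu_0)|$ while $v(0)\ge -k_d(1/2)/|I(\nu_0)| = 2^{d-2}/|I(\nu_0)|$ (for $d\ge 3$), and since $2^{d-2} > 1$ for $d\ge 3$ this gives $\omega(0,E;B(0,2)\setminus E) \ge (2^{d-2}-1)/|I(\nu_0)| \gtrsim_d -1/I(\nu_0)$; finally the original $\omega$ on $B(0,1)\setminus E$ dominates the rescaled one by domain monotonicity of harmonic measure (shrinking the domain from $B(0,2)$ to $B(0,1)$ increases the harmonic measure of the inner boundary $E$), which finishes the proof. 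The case $d=2$ is handled identically with the logarithmic kernel, using $\operatorname{diam}$ small enough after rescaling so that $-k_2 = \log(1/|x|) > 0$ on the relevant annulus. I expect the only genuinely delicate point to be getting the constants on the two concentric spheres to separate, which is exactly why the $B(0,1/2)$ versus $B(0,1)$ setup in the statement is what makes it work.
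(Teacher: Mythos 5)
Your comparison-function approach is genuinely different from the paper's (the paper never builds the function $v=p_{\nu_0}/I(\nu_0)$; it runs a ``dual'' argument, representing $\omega(0,E;\Omega)$ via Poisson--Jensen as $\int_E G_B(0,y)\,d\mu_w(y)\ge c_d\|\mu_w\|$ and then pairing $\mu_w$ with the equilibrium measure to get $\|\mu_w\|\ge -1/I(\nu_0)$). However, as written your proof has a real gap, and the attempted repair makes it worse.

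First, the repair is wrong. Domain monotonicity of harmonic measure goes the \emph{opposite} way from what you assert: if $\Omega_1\subset\Omega_2$ and $E\subset\partial\Omega_1\cap\partial\Omega_2$, then $\omega(x,E;\Omega_1)\le\omega(x,E;\Omega_2)$, because $\omega(\cdot,E;\Omega_2)$ is a nonnegative harmonic majorant of the boundary data of $\omega(\cdot,E;\Omega_1)$. Concretely, for $E=\overline{B(a,r)}$ the quantity $\omega(0,E;B(0,R)\setminus E)$ is \emph{increasing} in $R$. So passing from $B(0,2)\setminus E$ to $B(0,1)\setminus E$ \emph{decreases} $\omega(0,E;\cdot)$, and you cannot deduce the bound on $B(0,1)\setminus E$ from one on $B(0,2)\setminus E$.

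Second, you correctly diagnosed the underlying problem: with $E\subset B(0,1/2)$ both your lower bound $v(0)\ge 2^{d-2}/|I(\nu_0)|$ and your upper bound $\sup_{\partial B(0,1)}v\le 2^{d-2}/|I(\nu_0)|$ equal $2^{d-2}/|I(\nu_0)|$, so subtracting the crude supremum gives nothing. The missing idea is to subtract not $\sup_{\partial B(0,1)}v$ but the \emph{harmonic extension} $v_1$ of $v|_{\partial B(0,1)}$ into $B(0,1)$. Then $v-v_1$ is harmonic on $\Omega$, vanishes on $\partial B(0,1)$, is $\le 1$ on $E$ (by Frostman's maximum principle $-p_{\nu_0}\le|I(\nu_0)|$ and $v_1\ge 0$), so $\omega(0,E;\Omega)\ge (v-v_1)(0)$. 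By Newton's shell theorem, $v_1(0)=\frac1{\sigma(\partial B(0,1))}\int_{\partial B(0,1)} v\,d\sigma = 1/|I(\nu_0)|$, whereas $v(0)\ge 2^{d-2}/|I(\nu_0)|$; hence $\omega(0,E;\Omega)\ge (2^{d-2}-1)/|I(\nu_0)|>0$ for $d\ge 3$. This is exactly the replacement of the Riesz kernel $-k_d(|0-y|)$ by the Green's function $G_{B(0,1)}(0,y)=-k_d(|y|)+k_d(1)$, which is what the paper's use of the Poisson--Jensen formula accomplishes automatically. Your approach can be completed this way, but without that step it does not close, and the rescaling detour cannot substitute for it.
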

\begin{proof}
Define the function $w:B(0,1)\rightarrow\R$ by
\begin{equation*}
w(x)=	\begin{cases}
			\omega\bb{x,E,B\bb{0,1}\setminus E} & \xi\in B\bb{0,1}\setminus E\\
			1 & \text{otherwise}
		\end{cases}.
\end{equation*}
This function is super-harmonic, and by Poisson-Jensen formula for every $x\in B\bb{0,1}$, since $w|_{\partial B(0,1)}\equiv 0$,
\begin{equation*}
\tag{$\star$}-w(x)= -\integrate{B\bb{0,1}}{}{G_B\bb{y,x}}\mu_w(y),\label{eq:w}
\end{equation*}
where $G_B$ is Green's function on $B=B\bb{0,1}$, and $\mu_w$ the Reisz measure of $w$, defined by $d\mu_w=\frac{\Delta w}{\sigma(\partial B(0,1))}dm_d$, where $\sigma_d$ is the surface measure in $\R^d$. As $E\subseteq B\bb{0,\frac12}$, there exists a constant $c_d>0$ so that $G_B(0,y)>c_d$ for every $y\in E$, implying that 
\begin{equation*}
\tag{$\star\star$}\omega\bb{0,E,B\bb{0,1}\setminus E}=w\bb{0}\overset{\text{by (\ref{eq:w})}}=\integrate E{}{G_B\bb{0,y}}\mu_w\bb{y}\ge c_d\cdot\integrate E{}{}\mu_w\bb{y}=c_d\cdot\norm{\mu_w}{}.\label{eq:wBnd}
\end{equation*}
Next, let $\nu$ denote the equilibrium measure of the set $E$, which is compact. For every measure $\mu$ supported on $E$, by Frostman's theorem:
\begin{equation*}
\norm\mu{}=\mu(E)=\frac{-1}{I(\nu)}\integrate{\R^d}{}{\integrate{\R^d}{}{-k_d(\abs{x-y})}\nu(x)}\mu(x)\gtrsim_d \frac{-1}{I(\nu)}\integrate E{}{\integrate E{}{G_B\bb{x,y}}\mu(x)}\nu(y),
\end{equation*}
for $G_B$ Green's function on $B=B\bb{0,1}$. \\
In particular, for $\mu=\mu_w$ we get that
$$
\norm{\mu_w}{}\ge\frac{-1}{I(\nu)}\integrate E{}{\integrate E{}{G_B\bb{x,y}}\mu_w(y)}\nu(x)\overset{\text{by (\ref{eq:w})}}=\frac{-1}{I(\nu)}\integrate E{}{w\bb{x}}\nu(x)\overset{(a)}=\frac{-1}{I(\nu)}\integrate E{}{}\nu(y)\overset{(b)}=\frac{-1}{I(\nu)},
$$
where $(a)$ is since $w|_E\equiv1$, and $(b)$ is since $\nu$ is a probability measure supported on $E$.\\
Combining this with (\ref{eq:wBnd}), we see that
$$
\omega\bb{0,E,B\bb{0,1}\setminus E}\gtrsim_d\;\norm{\mu_w}{}\ge \frac{-1}{I(\nu)},
$$
concluding the proof.
\end{proof}
Combining these two claims we see that
$$
\omega(0,E; B(0,1)\setminus E)\ge\frac{-C_1}{I(\nu)}\ge C_1\cdot C_2\cdot\lambda_{d-1}(E),
$$
concluding the proof of Claim \ref{clm:HarmonicBnd}.

\subsection{Appendix B- Another proof of the Main Lemma}\label{subsec:sodin}
It is possible to prove the Main Lemma, Lemma \ref{lem:Jones-Makarov}, by using the Main Lemma in \cite{JoneMakarov1995} with out repeating Jones and Makarov's ingenious idea. This is done using the following lemma, suggested to the author by M.Sodin:
\begin{lem}\label{lem:sodin}
Let $u$ be a subharmonic function defined in a neighbourhood of $Q_0:=\left[- N, N\right]^d$ for some $N\gg 1$ and define $Q=\left[-\frac N{4\sqrt d},\frac N{4\sqrt d}\right]^d$. Let $E\subseteq Z_u \cap Q$ be a closed set. Then for every basic cube $I\subset Q$ satisfying that $\lambda_{d-1}(E\cap I)>\eps_d>0$
$$
\frac{M_u(I)}{M_u(Q_0)}\lesssim\frac{\omega(I^*)}{\omega(E)},
$$
where $\omega(\cdot)=\omega(\infty,\;\cdot\;;\R^d\setminus E)$ and $I^*$ is a cube concentric with $I$ having edge length which depends on the dimension alone.
\end{lem}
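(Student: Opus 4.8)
\emph{Plan.} The statement naturally splits into an analytic step — a maximum‑principle estimate that eliminates $u$ — and a purely potential‑theoretic step comparing an ``escape probability'' inside the bounded domain $Q_0\setminus E$ with the harmonic measure of $E$ from infinity.

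\emph{Step 1 (eliminating $u$).} Pick $x_I\in\overline I$ with $u(x_I)=M_u(I)$ (if $M_u(I)\le 0$ there is nothing to prove). Since $u$ is subharmonic in $Q_0\setminus E$, is $\le 0$ on $E$, and is $\le M_u(Q_0)$ on $\partial Q_0$, the maximum principle gives, exactly as in the proof of Observation~\ref{obs:jump},
$$
\frac{M_u(I)}{M_u(Q_0)}\;\le\;\omega\bigl(x_I,\partial Q_0;Q_0\setminus E\bigr)\;=\;1-\omega\bigl(x_I,E;Q_0\setminus E\bigr),
$$
so it suffices to bound $g(x):=\omega(x,\partial Q_0;Q_0\setminus E)$ on $I$ by $\lesssim_d\omega(I^*)/\omega(E)$. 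The hypothesis $\lambda_{d-1}(E\cap I)>\eps_d$ enters only through $E$: by Claim~\ref{clm:Hausdorff} applied to a unit‑scale copy of $E\cap I$, the equilibrium measure of $E\cap I$ has energy $\lesssim_d 1/\eps_d$, i.e. $E\cap I$ is ``thick'' at scale one; equivalently, Brownian motion started anywhere in $I$ meets $E\cap I$ before leaving $I^*$ with probability $\ge c_d>0$ (this is Claim~\ref{clm:HarmonicBnd} rescaled), provided $\ell(I^*)$ is chosen to be a suitable dimensional constant.

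\emph{Step 2 (the potential‑theoretic comparison).} Decompose the escape event for Brownian motion from $x\in I$ according to its last exit from $I^*$: it must first leave $I^*$ without meeting $E\cap I^*$, which by Step~1 has probability $\le 1-c_d$, and then, from a point of $\partial I^*$, it must reach $\partial Q_0$ without meeting $E\setminus I^*$. Hence
$$
g(x)\;\le\;(1-c_d)\cdot\sup_{y\in\partial I^*}\omega\bigl(y,\partial Q_0;Q_0\setminus(E\setminus I^*)\bigr).
$$
The remaining factor is where harmonic measure from infinity appears. Since $E\setminus I^*\subset Q$ and $Q\subset Q_0$ is only a fixed dimensional dilation, a Harnack/Green's‑function comparison on the region between $I^*$ and $\partial Q_0$ identifies $\omega\bigl(y,\partial Q_0;Q_0\setminus(E\setminus I^*)\bigr)$, up to constants depending only on $d$, with the equilibrium potential of $E\setminus I^*$ evaluated near $\partial I^*$ renormalized by its total potential drop. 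Reinstating the bounded‑capacity piece $E\cap I^*$ and using that, by Step~1 together with the Frostman‑type bound of Claim~\ref{clm:Hausdorff}, it carries a positive proportion of the portion of the equilibrium measure of $E$ lying inside $I^*$, this factor is $\lesssim_d\omega(\infty,I^*;\R^d\setminus E)\big/\omega(\infty,E;\R^d\setminus E)$. Combining the two factors with Step~1 yields the lemma.

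\emph{Main obstacle.} The genuine difficulty is the second factor in Step~2: one must pass rigorously between the escape probability in the bounded domain $Q_0\setminus(E\setminus I^*)$ and the harmonic measure of $E$ from infinity in $\R^d\setminus E$, keeping careful track of the two comparable‑but‑distinct scales $\ell(Q)$ and $\ell(Q_0)$ in the normalization, and carrying out the estimate in parallel for $d=2$ (logarithmic kernel, honest Green's function with pole at infinity) and $d\ge 3$ (Riesz kernel, transient Brownian motion). Step~1, by contrast, is routine — it is the same two‑constants inequality already used for Observation~\ref{obs:jump} — and the value of the dimensional constant $\ell(I^*)$ is dictated entirely by the estimates in Step~2.
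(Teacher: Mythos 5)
Your Step~1 is sound and is essentially parallel to what the paper does, just in a different guise: the paper introduces Green's function $g$ of $\Omega=\R^d\setminus E$ with pole at $\infty$, proves $g\ge c_1$ on $\partial Q_0$ by subordinating to the exterior of a ball, and concludes $M_u(I)/M_u(Q_0)\lesssim M_g(I)$ by the two‑constants theorem; your bounded‑domain version $M_u(I)/M_u(Q_0)\le\omega(x_I,\partial Q_0;Q_0\setminus E)$ is an equivalent launching point. Note also that $\omega(E)\le1$, so the quantity you actually need to reach is just $\omega(\infty,I^*;\R^d\setminus E)$; the denominator in the statement is slack.

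The gap is in Step~2, and you have correctly identified it yourself, but the sketch you offer does not close it. The last‑exit decomposition at $\partial I^*$ gives only the constant factor $(1-c_d)$; it does not by itself convert the escape probability $\sup_{y\in\partial I^*}\omega(y,\partial Q_0;Q_0\setminus(E\setminus I^*))$ into anything proportional to $\omega(\infty,I^*;\R^d\setminus E)$. The phrase ``Harnack/Green's‑function comparison ... identifies [the escape probability] with the equilibrium potential ... renormalized by its total potential drop'' is where a real argument is needed, and what is written is not an argument — it is not even clear what the comparison domain is, and the two scales $\ell(Q)$ and $\ell(Q_0)$ do not resolve the issue. The missing mechanism in the paper's proof is of a different nature: one applies Nevanlinna's first fundamental theorem (the Riesz/Jensen representation on a ball $B(x_I,\alpha\sqrt d)$) to $g$, obtaining $M_g(B(x_I,\sqrt d))\le(1+\delta)\bigl(g(x_I)+C_d\,\mu_g(B(x_I,\alpha\sqrt d))\bigr)$, and then uses Observation~\ref{obs:jump} to absorb $g(x_I)\le(1-\delta_d)M_g(B(x_I,\sqrt d))$ back into the left side. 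Solving this self‑improving inequality gives $M_g(I)\lesssim_d\mu_g(I^*)$, i.e. the maximum of $g$ near $I$ is controlled by the Riesz mass in the slightly larger cube $I^*$; the lemma then follows immediately from the identity $\mu_g(I^*)=\sigma_d(B(0,1))\,\omega(\infty,I^*;\R^d\setminus E)$. This ``sup controlled by local Riesz mass'' step is the real content of the lemma; your probabilistic decomposition does not produce a substitute for it, so the proposal as written does not prove the statement.
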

In light of this lemma, and using the Main Lemma in \cite{JoneMakarov1995}, we get another proof for Lemma \ref{lem:Jones-Makarov}. For completeness, we bring here a reformulation of the Main Lemma in \cite{JoneMakarov1995}:
\begin{lem}[The Main Lemma in \cite{JoneMakarov1995}]
Let $E\subset B(0,1)$ be a compact set , and let $\omega(\cdot)=\omega(\infty,\cdot;\R^d\setminus E)$. We subdivide $[0,1]^d$ into $N^d$ cubes I with side length $\frac1N$ and denote by $\mathcal G$ the whole collection of cubes. We define a subset $\mathcal E\subset\mathcal G$ ("empty" squares), as follows:
$$
I\in\mathcal E \text{ if }\lambda_{d-1}(E\cap I)\le\frac{\eps_d}{N^{d-1}},
$$
where $\lambda_{d-1}$ is the $(d-1)$-dimensional dyadic Hausdorff content. If $\#\mathcal E\le c_d\cdot N^d$ for some absolute constant $c_d$, then for at least half of the cubes $I\in\mathcal G$ the following inequality holds:
$$
\frac{\omega(I)}{\omega([0,1]^d)}<\exp \bb{-\alpha_d\bb{\frac {N^d}{N+\#\mathcal E}\log^d\bb{2+\frac{\#\mathcal E}N}}^{\frac1{d-1}}},
$$
where $\alpha_d$ is an absolute constant.
\end{lem}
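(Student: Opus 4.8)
This lemma is the harmonic--measure avatar of the Main Lemma, Lemma \ref{lem:Jones-Makarov}: after rescaling by $N$, so that the $N^d$ subcubes of side $\frac1N$ become the unit basic cubes tiling a cube of edge $N$ and the condition $\lambda_{d-1}(E\cap I)\le\eps_d/N^{d-1}$ becomes $\lambda_{d-1}(E\cap I)<\eps_d$, the family $\mathcal E$ here is precisely the $\mathcal E_u$ of Section \ref{sec:main_lem} with the set $E$ playing the role of $Z_u$. The plan is therefore to re-run the stopping--time argument of Section \ref{sec:main_lem} essentially verbatim. Everything purely combinatorial there --- the auxiliary function $\rho$ together with the observation replacing Observation \ref{obs:Z_u} (now: every ball $B(x,\rho(x))$ satisfies $\lambda_{d-1}(E\cap B(x,\rho(x)))\gtrsim_d\rho(x)^{d-1}$), the dyadic cover $\mathcal M$, the step function $M(k)$, Claims \ref{clm:large_sqrs_measure} and \ref{clm:prop2}, Steps 1--3 and the closing Jensen optimisation --- uses only the distribution of $\mathcal E$ and transfers without change. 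The single ingredient that must be replaced is the \emph{one--step estimate}: in place of the multiplicative \emph{increase} of $M_u$ furnished by Observation \ref{obs:jump}, one uses a multiplicative \emph{decrease} of harmonic measure furnished by Claim \ref{clm:HarmonicBnd}.

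To carry this out, fix a cube $I$, put $A:=E\cap I$ and $B:=E\cap[0,1]^d$, so $A\subseteq B$ and $\frac{\omega(I)}{\omega([0,1]^d)}=\frac{\omega(\infty,A;\,\R^d\setminus E)}{\omega(\infty,B;\,\R^d\setminus E)}$. As in Section \ref{sec:main_lem}, for at least half of the cubes $I$ one produces a chain $I=Q_0\subset Q_1\subset\dots\subset Q_n\subset[0,1]^d$ of concentric cubes with
\[
n=n(I)\;\gtrsim_d\;\frac{N}{1+\bb{\frac{\#\mathcal E}{N}}^{\frac1{d-1}}}\,\log^{\frac d{d-1}}\!\bb{2+\frac{\#\mathcal E}{N}}\;\sim_d\;\bb{\frac{N^d}{N+\#\mathcal E}\,\log^d\bb{2+\frac{\#\mathcal E}{N}}}^{\frac1{d-1}},
\]
and such that for each $j$ there is a ball $B_j$ of radius $r_j$, lying strictly inside the annulus $Q_{j+1}\setminus\overline{Q_j}$ (with room to spare) and with $\lambda_{d-1}(E\cap B_j)\gtrsim_d r_j^{d-1}$. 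I would then set $g_j:=\sup_{\xi\in\partial Q_j}\frac{\omega(\xi,A;\,\R^d\setminus E)}{\omega(\xi,B;\,\R^d\setminus E)}\le1$ and prove $g_{j+1}\le e^{-\delta_d}g_j$ for a dimensional $\delta_d>0$. Let a Brownian motion start at $\xi\in\partial Q_{j+1}$, let $\tau$ be its first hitting time of $E$ and $\sigma$ its first hitting time of $\partial Q_j$. Every path with $\tau<\sigma$ is absorbed outside $Q_j\supseteq A$, so $\omega(\xi,A)=\mathbb E_\xi\!\bb{\mathbf 1_{\{\sigma<\tau\}}\,\omega(X_\sigma,A)}$ while $\omega(\xi,B)=p_\xi+\mathbb E_\xi\!\bb{\mathbf 1_{\{\sigma<\tau\}}\,\omega(X_\sigma,B)}$, where $p_\xi$ is the probability of being absorbed by $B\setminus Q_j$ before time $\sigma$; since $\omega(\,\cdot\,,A)\le g_j\,\omega(\,\cdot\,,B)$ on $\partial Q_j$ and $\omega(\xi,B)\le1$, this gives $\frac{\omega(\xi,A)}{\omega(\xi,B)}\le g_j\,(1-p_\xi)$. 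Finally, a rescaled Claim \ref{clm:HarmonicBnd} applied to a ball slightly larger than $B_j$ but still disjoint from $Q_j$ shows $p_\xi\ge\delta_d$: hitting the $E$--chunk inside $B_j$ before leaving that larger ball forces absorption in $B\setminus Q_j$ before time $\sigma$. Iterating gives $g_n\le e^{-n\delta_d}$, and the same step at $\partial Q_n$ with the motion started ``at $\infty$'' gives $\omega(\infty,A)\le g_n\,\omega(\infty,B)$, i.e.\ $\frac{\omega(I)}{\omega([0,1]^d)}\le e^{-n\delta_d}$.

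Combining this with the lower bound on $n(I)$ yields, for at least half the cubes $I\in\mathcal G$,
\[
\frac{\omega(I)}{\omega([0,1]^d)}\;<\;\exp\!\bb{-\alpha_d\bb{\frac{N^d}{N+\#\mathcal E}\,\log^d\bb{2+\frac{\#\mathcal E}{N}}}^{\frac1{d-1}}},
\]
with a dimensional $\alpha_d$ (the shapes $1+x^{\frac1{d-1}}$ and $(1+x)^{\frac1{d-1}}$ of the denominator of the exponent differ only by a dimensional factor, which gets absorbed into $\alpha_d$). I do not expect a genuinely new obstacle: the combinatorial heart of the argument is exactly that of Section \ref{sec:main_lem} and is reused as a black box. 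The one point that needs real care --- where essentially all the (routine) bookkeeping lives --- is strengthening the chain construction so that the $E$--rich ball attached to each layer lies \emph{strictly between} consecutive cubes of the chain; in Section \ref{sec:main_lem} such a ball is only required to lie inside the \emph{next} cube, whereas here it must additionally avoid the previous one (e.g.\ by thinning the sequence $\bset{\kappa_j}$), so that the piece of $E$ it carries genuinely contributes to $p_\xi$. Two minor technical matters should also be mentioned: one works with the \emph{ratio} $\omega(\,\cdot\,,E\cap I)/\omega(\,\cdot\,,E\cap[0,1]^d)$ rather than with $\omega(\,\cdot\,,E\cap I)$ alone --- this makes the normalisation by $\omega([0,1]^d)$ in the conclusion automatic and keeps the per--layer estimate clean, avoiding the wrong--way monotonicity of harmonic measure under enlarging the obstacle $E$ --- and one must give a precise meaning to the Brownian motion ``from $\infty$'' and to $\omega(\infty,\,\cdot\,;\R^d\setminus E)$, which is standard (via the symmetry of the Green function, or a limit of starting points tending to infinity).
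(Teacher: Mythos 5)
You should first note that the paper does not prove this statement at all: it is quoted from \cite{JoneMakarov1995} and used as a black box in Appendix B, while the stopping-time argument of Section \ref{sec:main_lem} is the paper's adaptation of Jones--Makarov's ideas to \emph{subharmonic functions}, not a proof of the harmonic-measure lemma itself. Your plan --- rerun Section \ref{sec:main_lem} verbatim and swap Observation \ref{obs:jump} for a per-layer decay of the ratio $\omega(\cdot,E\cap I)/\omega(\cdot,E\cap[0,1]^d)$ via the strong Markov property and a rescaled Claim \ref{clm:HarmonicBnd} --- is the right shape of argument, and the Markov-property bookkeeping (including working with the ratio and giving meaning to $\omega(\infty,\cdot)$) is fine. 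The genuine gap is exactly at the point you dismiss as routine: the uniform bound $p_\xi\ge\delta_d$ for \emph{every} $\xi\in\partial Q_{j+1}$.

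The construction of Section \ref{sec:main_lem} guarantees, for $\xi\in\partial Q_{j+1}$, only that $B(\xi,\rho(\xi))\subset Q_{j+2}$ and that this ball carries $E$-content $\gtrsim_d\rho(\xi)^{d-1}$ (Observation \ref{obs:Z_u}); it does \emph{not} keep that ball away from $Q_j$. Indeed $\rho(\xi)\le M(\kappa_{j+1})$, while the separation from $Q_j$ is only $\kappa_{j+1}-\kappa_j> M(\kappa_j)$, which can be far smaller; worse, all of the non-empty cubes that force $\rho(\xi)$ to be small may lie \emph{inside} $Q_j$, with the entire annulus between $Q_j$ and $Q_{j+2}$ made of cubes of $\mathcal E$. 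In that configuration a path from $\xi$ can reach $E$ only after crossing $\partial Q_j$, so $p_\xi$ has no dimensional lower bound and the layer produces no decay --- whereas the subharmonic one-step estimate is insensitive to where the mass sits inside the ball, which is precisely why the paper's construction suffices in Section \ref{sec:main_lem} but not here. Repairing this by ``thinning'' $\{\kappa_j\}$ is not bookkeeping: the separation needed at layer $j+1$ is $M(\kappa_{j+1})$, not $M(\kappa_j)$, and a recursion such as $\kappa_{j+1}:=\min\{k\in K_I:\;k>\kappa_j+M(k)\}$ only gives, between consecutive stopping scales, $\sum 1/M(k)\le\sum_{k-\kappa_j\le M(k)}1/(k-\kappa_j)=O(\log N)$ instead of the $O(1)$ used in Step 1, so in bad configurations of the $s_m$'s you lose a logarithmic factor and with it the sharp exponent $\log^{d/(d-1)}(2+\#\mathcal E/N)$. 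Building stopping scales that are simultaneously numerous and ``$E$-visible from outside each layer'' is the real content of Jones and Makarov's proof, which is why the paper cites it rather than reproving it; as written, your reduction does not yet close.
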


\begin{proof}[Alternative proof of Lemma \ref{lem:Jones-Makarov}]
Following Lemma \ref{lem:sodin}, it is enough to bound $\omega(I^*)$ from above.\\
Let $I\in\mathcal G\setminus\mathcal E$ be a basic cube and let $I^*$ be the corresponding  cube from Lemma \ref{lem:sodin}. We rescale the set $E$ by $\frac1{\ell(I^*)N}$ so that rescaling $I^*$, we get a cube of edge-length $\frac1N$, and $E\subset B(0,1)$. We may now apply the Main Lemma in \cite{JoneMakarov1995}, on the rescaled set, to conclude the proof.
\end{proof}

\subsection{The proof of Lemma \ref{lem:sodin}}
\begin{proof}[\nopunct]
Let $g$ denote Green's function for the domain $\Omega:=\R^d\setminus E$ with pole at $\infty$. Since $E\subseteq Q\subset B\bb{0,\frac N2}$ and Green's functions satisfy the subordination principle, for every $\abs x\ge N$
$$
g(x)\ge g_{\R^d\setminus B\bb{0,\frac N2}}(\infty,x)\ge\underset{\abs x=N}\min\; g_{\R^d\setminus B\bb{0,\frac N2}}(\infty,x)=\underset{\abs x=2}\min\; g_{\R^d\setminus B(0,1)}(\infty,x):=c_1,
$$
where $c_1$ is a constant which depends on the dimension, but independent of $N$. By the maximum principle, for every subharmonic function, $u$, with $E\subset Z_u$,
$$
\frac{u(x)}{M_u(Q_0)}\le 1\le\frac{g(x)}{c_1},
$$
implying that
$$
\frac{M_u(I)}{M_u(Q_0)}\lesssim M_g(I).
$$
To conclude the proof we shall bound $M_g(I)$ from above.

Let $h$ be a subharmonic function and let $I$ be a basic cube satisfying property (\ref{eq:zero}), and let $x_I\in\partial I$ be a point satisfying that $M_h(I)=h(x_I)$. By taking a square double in edge length, we may assume without loss of generality that $dist(x_I,E\cap I)>\frac13$. We note that following Nevanlinna's first fundamental theorem (see for example \cite[Theorem 3.19]{Haywood}), for every $r>\sqrt d$
$$
M_h\bb{B\bb{x_I,\sqrt d}}\le \frac{r^{d-2}(r+\sqrt d)}{(r-\sqrt d)^{d-1}}\bb{h(x_I)+\underset{y\in E\cap I}\max G_{B(x_I,r)}(x_I,y)\cdot\mu_h(B(x_I,r))},
$$
where $G_{B(x_I,r)}$ is Green's function for a ball of radius $r$ centered at $x_I$, and $\mu_h$ is Riesz measure, defined by $d\mu_h(x)=\frac{\Delta h(x)}{\sigma_d(\partial B(0,1))}dm_d(x)$, for $\sigma_d$ the surface area measure in $\R^d$.\\
If $r=\alpha\cdot\sqrt d$, then for every $d$ fixed
$$
\frac{r^{d-2}(r+\sqrt d)}{(r-\sqrt d)^{d-1}}=\frac{\alpha^{d-2}d^{\frac{d-2}2}\cdot\sqrt d(\alpha+1)}{d^{\frac{d-1}2}(\alpha-1)^{d-1}}=\frac{\alpha^{d-2}(\alpha+1)}{(\alpha-1)^{d-1}}=\frac{1+\frac1\alpha}{\bb{1-\frac1\alpha}^{d-1}}\searrow 1, \text{ as } \alpha\rightarrow\infty.
$$
We conclude that for every $\delta>0$ there exists $\alpha$ so that
\begin{equation*}
\tag{$\star$} M_h\bb{B\bb{x_I,\sqrt d}}\le \bb{1+\delta}\bb{h(x_I)+\underset{y\in E\cap I}\max G_{B(x_I,\alpha\sqrt d)}(x_I,y)\cdot\mu_h(B(x_I,\alpha\sqrt d))}. \label{eq:Nevanlinna}
\end{equation*}
Next, since the basic cube $I$ satisfies that $\lambda_{d-1}(E\cap I)>\eps_d>0$, following Observation \ref{obs:jump}, there exists $\delta_d>0$ for which
$$
h(x_I)<M_h\bb{B\bb{x_I,\sqrt d}}(1-\delta_d).
$$
Combining this with (\ref{eq:Nevanlinna}), we note that if we choose $\alpha>0$ large enough (depending on the dimension $d$, and, in particular, on $\delta_d$)
\begin{eqnarray*}
M_h\bb{B\bb{x_I,\sqrt d}}&\le& \bb{1+\frac{\delta_d}2}\bb{h(x_I)+\underset{y\in E\cap I}\max G_{B(x_I,\alpha\sqrt d)}(x_I,y)\cdot\mu_h(B(x_I,\alpha\cdot\sqrt d))}\\
&\le&\bb{1+\frac{\delta_d}2}\bb{(1-\delta_d)M_h\bb{B\bb{x_I,\sqrt d}}+c_d\cdot\mu_h(B(x_I,\alpha\cdot\sqrt d))},
\end{eqnarray*}
for some constant $c_d$, which depends on the dimension alone.

This implies that if $I^*$ is a cube centred at $x_I$ with edge length $2\alpha\sqrt d$ then
$$
M_h(I)\le M_h\bb{B\bb{x_I,\sqrt d}}\le \frac{4c_d}{\delta_d}\cdot \mu_h(B(x_I,\alpha\cdot\sqrt d))\le\frac{4c_d}{\delta_d}\cdot \mu_h(I^*).
$$
In particular, for $h=g$ we see that
$$
M_g(I)\lesssim\mu_g(I^*).
$$
To conclude the proof, we use the fact that for every cube $J$:
$$
\mu_g(J)=\sigma_d\bb{B(0,1)}\cdot\omega(\infty,J;\Omega),
$$
for $\sigma_d$ the surface area in $\R^d$, implying that
$$
\frac{M_u(I)}{M_u(Q_0)}\lesssim M_g(I)\lesssim\mu_g(I^*)\sim_d\omega(\infty,I^*,\R^d\setminus E)\le\frac{\omega(I^*)}{\omega(E)}.
$$
\end{proof}


\nocite{*}
\bibliographystyle{plain}
\bibliography{}

\begin{thebibliography}{}

\bibitem{Baranov2008}
A.~Baranov and  H.~ Hedenmalm.
\newblock{Boundary properties of Green functions in the plane}.
\newblock{\em Duke Math. J.},145:1-24, 2008.

\bibitem{Bingang}
N.H ~Bingham, C.M~ Goldie, and J.L Teugels.
\newblock {Regular Variation}.
\newblock {\em Encyclopedia of Mathematics and its Applications, Cambridge University Press}, 1987.


\bibitem{Bishoperes}
 C. Bishop, and Y. Peres.
\newblock{Fractals in probability and analysis},
\newblock {\em Cambridge Studies in Advanced Mathematics,Vol 162, Cambridge University Press, Cambridge}, 2017.


\bibitem{Us2017}{L.~Buhovsky, A.~Gl\"ucksam, A.~Logunov, and M.~Sodin},
\newblock {Translation-invariant probability measures on entire functions}.
\newblock {\em Journal d'Analyse Mathématique, 1-33}, 2019.

\bibitem{Carleson}{L.Carleson},
\newblock {Selected Problems on Exceptional Sets},
\newblock {\em Van Nostrand}, 1967.

\bibitem{Haywood}
W. K. Hayman and P. B. Kennedy.
\newblock Subharmonic functions Vol. 1.
\newblock {\em London Mathematical Society Monographs No 9, Academic Press, London}, 1976.

\bibitem{JoneMakarov1995}
P.~W. Jones and N.G. Makarov.
\newblock Density properties of harmonic measure.
\newblock {\em Ann. of Math. (2)}, 142(3):427--455, 1995.


\end{thebibliography}

\bigskip
\noindent A.G.:
Mathematical and Computational Sciences, University of Toronto Mississauga, Canada.
\newline{\tt adiglucksam@gmail.com} 

\end{document}